\numberwithin{equation}{section}
\DeclareFontFamily{OT1}{rsfs}{}
\DeclareFontShape{OT1}{rsfs}{n}{it}{<-> rsfs10}{}
\DeclareMathAlphabet{\mathscr}{OT1}{rsfs}{n}{it}
\theoremstyle{plain}
\newtheorem{theorem}{Theorem}[section]
\newtheorem{proposition}[theorem]{Proposition}
\newtheorem{lemma}[theorem]{Lemma}
\theoremstyle{definition}
\newtheorem{definition}[theorem]{Definition}
\newtheorem{remark}[theorem]{Remark}
\begin{document}

\title[Multiple solutions with constant sign]{Multiple solutions with constant sign of a Dirichlet problem for a class of
elliptic systems with variable exponent growth}

\author{Li Yin}
\address{College of Information and Management Science, Henan Agricultural University,
Zhengzhou, Henan 450002, China}
\email{mathsr@163.com (L. Yin)}

\author{Jinghua Yao$^\dagger$}
\address{$^\dagger$ \textbf{Corresponding author}, Department of Mathematics, Indiana University, Bloomington, IN, 47408, USA.}
\email{yaoj@indiana.edu (J. Yao)}

\author{Qihu Zhang}
\address{Department of Mathematics and Information Science, Zhengzhou
University of Light Industry, Zhengzhou, Henan, 450002, China.}
\email{zhangqihu@yahoo.com (Q. Zhang)}

\author{Chunshan Zhao}
\address{Department of Mathematical Sciences, Georgia Southern University, Statesboro, GA 30460, USA.}
\email{czhao@GeorgiaSouthern.edu (C. Zhao)}
\date{\today}

\thanks{$\dagger$ Corresponding author}
\thanks{$^\star$This research is partly supported by
the key projects in Science and Technology Research of the Henan Education
Department (14A110011).}

\subjclass[2010]{35J20; 35J25;
35J60}

\begin{abstract}
We investigate the following
Dirichlet problem with variable exponents:
\begin{equation*}
\left\{
\begin{array}{l}
-\bigtriangleup _{p(x)}u=\lambda \alpha (x)\left\vert u\right\vert ^{\alpha
(x)-2}u\left\vert v\right\vert ^{\beta (x)}+F_{u}(x,u,v),\text{ in }\Omega ,
\\
-\bigtriangleup _{q(x)}v=\lambda \beta (x)\left\vert u\right\vert ^{\alpha
(x)}\left\vert v\right\vert ^{\beta (x)-2}v+F_{v}(x,u,v),\text{ in }\Omega ,
\\
u=0=v,\text{ on }\partial \Omega.
\end{array}
\right.
\end{equation*}
We present here, in the system setting, a new set of growth conditions under which we manage to use a novel method to verify the Cerami compactness condition. By localization argument, decomposition technique and variational methods, we are able to show the existence of multiple solutions with constant sign for the problem without the well-known
Ambrosetti--Rabinowitz type growth condition. More precisely, we manage to show that the problem admits
four, six and infinitely many solutions respectively.

\noindent \textbf{Key words}: $p(x)$-Laplacian, Dirichlet problem, solutions
with constant sign, Ambrosetti-Rabinowitz Condition, Cerami condition, Critical point.

\noindent \textbf{Mathematics Subject Classification(2010)}: 35J20; 35J25;
35J60
\end{abstract}

\maketitle

\baselineskip 14pt

\section{ Introduction}

In this paper, we consider the existence of multiple solutions to the
following Dirichlet problem for an elliptic system with variable exponents:
\begin{equation*}
\text{$(P)$ }\left\{
\begin{array}{l}
-\bigtriangleup _{p(x)}u=\lambda \alpha (x)\left\vert u\right\vert ^{\alpha
(x)-2}u\left\vert v\right\vert ^{\beta (x)}+F_{u}(x,u,v),\text{ in }\Omega ,
\\
-\bigtriangleup _{q(x)}v=\lambda \beta (x)\left\vert u\right\vert ^{\alpha
(x)}\left\vert v\right\vert ^{\beta (x)-2}v+F_{v}(x,u,v),\text{ in }\Omega ,
\\
u=0=v,\text{ on }\partial \Omega,
\end{array}
\right.
\end{equation*}
where $\bigtriangleup _{p(x)}u:=\mbox{div}(\left\vert \nabla u\right\vert
^{p(x)-2}\nabla u)$ is called $p(x)$-Laplacian which is nonlinear and
nonhomogeneous, $\Omega \subset \mathbb{R}^{N}$ is a bounded domain, and $
p(\cdot ),q(\cdot ),\alpha (\cdot ),\beta (\cdot )>1$ are in the space $
C^{1}(\overline{\Omega })$ which consists of differentiable functions with
continuous first order derivatives on $\overline{\Omega }$.

Elliptic equations and systems of elliptic equations with variable exponent
growth as in problem $(P)$ arise from applications in electrorheological
fluids and image restoration. We refer the readers to \cite{e1}, \cite{e2},
\cite{e3}, \cite{e4} and the references therein for more details in
applications. In particular, see \cite{e2} for a model with variable
exponent growth and its important applications in image denoising,
enhancement, and restoration. Problems with variable exponent growth also
brought challenging pure mathematical problems. Compared with the classical
Laplacian $\Delta=\Delta_2$ which is linear and homogeneous and the $p$
-Laplacian $\Delta_p\cdot:=\mbox{div}(|\nabla\cdot|^{p-2}\nabla \cdot)$
which is nonlinear but homogeneous for constant $p$, the $p(x)$-Laplacian is
both nonlinear and inhomogeneous. Due to the nonlinear and inhomogeneous
nature of the $p(x)$-Laplacian, nonlinear (systems of) elliptic equations
involving $p(x)$-Laplacian and nonlinearities with variable growth rates are
much more difficult to deal with. Driven by the real-world applications and
mathematical challenges, the study of elliptic equations and systems with
variable exponent growth has attracted many researchers with different
backgrounds, and become a very attracting field. We refer the readers to
\cite{j8}, \cite{e12}, \cite{e13}, \cite{e16}, \cite{e17}, \cite{j1}, \cite
{zj2}, \cite{rr}, \cite{e30}, \cite{e31}, \cite{zj3} and the related
references to tract the rapid development of the field.

In this paper, our main goal is to obtain some existence results for the
problem $(P)$, a Dirichlet problem for elliptic systems with variable
exponents, without the famous Ambrosetti-Rabinowitz condition via critical
point theory. For this purpose, we propose a new set of growth conditions
for the nonlinearities in the current system of elliptic equations setting.
Our new set of growth conditions involve only variable growths which
naturally match the variable nature of the problem under investigation.
Under our growth conditions, we can use a novel method to verify that the
corresponding functional to the problem $(P)$ satisfies the Cerami
compactness condition which is a weaker compactness condition yet is still
sufficient to yield critical points of the functional. See the details of
proofs in Section 3. The current study generalizes in particular our former
investigations \cite{yyzz} and \cite{zj3}. However, this generalization from
a single elliptic equation to the current system of elliptic equations is by
no means trivial. Besides technical complexities, the assumptions in the
current study are more involved. In particular, though we still do not need
any monotonicity on the nonlinear terms, we do need impose certain
monotonicity assumptions on the variable exponents to close our argument in
the system setting.

When we utilize variational argument to obtain existence of weak solutions
to elliptic equations, typically we impose the famous Ambrosetti-Rabinowitz
growth condition on the nonlinearity to guarantee the boundedness of
Palais-Samle sequence. Under the Ambrosetti-Rabinowitz growth condition, one
then tries to verify the Palais-Smale condition. However, the
Ambrosetti-Rabinowitz type growth condition excludes a number of interesting
nonlinearities. In view of this fact, a lot of efforts were made to show the
existence of weak solutions in the variational framework without this type
of growth condition, especially for the usual $p$-Laplacian and a single
nonlinear elliptic partial differential equation (see, in particular, \cite
{zj4}, \cite{zj5}, \cite{jj5}, \cite{jj3}, \cite{jj2}, \cite{jj4} and the
references therein). Our results can be regarded as extensions of the
corresponding results for the $p$-Laplacian problems. There are also some
related earlier works which dealt with elliptic variational problems in the
variable exponent spaces framework, see \cite{jj1}, \cite{zj1}, \cite{31a},
\cite{zj3}, \cite{wyl} and related works. These earlier studies were mainly
focused on a single elliptic equation. In the interesting earlier study \cite
{31a}, the author considered the existence of solutions of the following
variable exponent differential equations without Ambrosetti-Rabinowitz
condition on bounded domain,
\begin{equation}
\left\{
\begin{array}{l}
-\bigtriangleup _{p(x)}u=f(x,u)\text{, in }\Omega \text{,} \\
u=0\text{, on }\partial \Omega.
\end{array}
\right.  \label{aa2}
\end{equation}
However, in some aspects the assumption is even stronger than the
Ambrosetti-Rabinowitz condition. In a recent study \cite{jj1}, the authors
considered the variable exponent equation in the whole space $\mathbb{R}^N$
under the following assumptions: $(1^{0})$ there exists a constant $\theta
\geq 1$, such that $\theta \mathcal{F}(x,t)\geq \mathcal{F}(x,st)$ for any$\
(x,t)\in \mathbb{R}^{N}\mathbb{\times R}$ and $s\in \lbrack 0,1]$, where $
\mathcal{F}(x,t)=f(x,t)t-p^{+}F(x,t)$; $(2^{0})$ $f\in C(\mathbb{R}^{N}
\mathbb{\times R},\mathbb{R})$ satisfies $\text{ }\underset{\left\vert
t\right\vert \rightarrow \infty }{\lim }\frac{F(x,t)}{\left\vert
t\right\vert ^{p^{+}}}=\infty . \label{abc} $ In \cite{zj1}, the authors
considered the problem (\ref{aa2}) in a bounded domain under the condition $
(2^{0})$. In \cite{wyl}, the authors studied a variable exponent
differential equation with a potential term in the whole space $\mathbb{R}^N$
. The authors proposed conditions under which they could show the existence
of infinitely many high energy solutions without the Ambrosetti-Rabinowitz
condition. In the above mentioned works, the growths conditions involved
either the supremum or the infimum of the variable exponents. In our current
study, we are able to provide a number of existence results in the system
setting under assumptions that only involve variable exponent growths which
match naturally the variable growth of the problem under study.

We point out here that the growth conditions we use here and the method to
check the Cerami compactness condition are different from all the above
mentioned works. Due to the differences between the $p$-Laplacian and $p(x)$
-Laplacian mentioned as above, it is usually challenging to judge whether or
not results about $p$-Laplacian can be generalized to $p(x)$-Laplacian.
Meanwhile, some new methods and techniques are needed to study elliptic
equations involving the non-standard growth, as the commonly known methods
and techniques to study elliptic equations involving standard growth may
fail. The main reason, as mentioned earlier, is that the principal elliptic
operators in the elliptic equations involving the non-standard growth is not
homogeneous anymore. To see some new features associated with the $p(x)$
-Laplacian, we first point out that the norms in variable exponent spaces
are the so-called Luxemburg norms $\left\vert u\right\vert _{p(\cdot )}$
(see Section 2) and the integral $\int_{\Omega }\left\vert u(x)\right\vert
^{p(x)}dx$ does not have the usual constant power relation as in the spaces $
L^{p}$ for constants $p$. Another subtle feature is on the principal
Dirichlet eigenvalue. As invetigated in \cite{e13}, even for a bounded
smooth domain $\Omega \subset \mathbb{R}^{N}$, the principle eigenvalue $
\lambda _{p(\cdot )}$ defined by the Rayleigh quotient
\begin{equation}
\lambda _{p(\cdot )}=\underset{u\in W_{0}^{1,p(\cdot )}(\Omega )\backslash
\{0\}}{\inf }\frac{\int_{\Omega }\frac{1}{p(x)}\left\vert \nabla
u\right\vert ^{p(x)}dx}{\int_{\Omega }\frac{1}{p(x)}\left\vert u\right\vert
^{p(x)}dx}  \label{d2}
\end{equation}
is zero in general, and only under some special conditions $\lambda
_{p(\cdot )}>0$ holds. For example, when $\Omega \subset \mathbb{R}$ ($N=1$)
is an interval, results show that $\lambda _{p(\cdot )}$ $>0$ if and only if
$p(\cdot )$ is monotone. This feature on the $p(x)$-Laplacian Dirichlet
principle eigenvalue plays an important role for us in proposing the
assumptions on the variable exponents and in our proofs of the main results.

Now we shall first list the assumptions on the nonlinearity $F$ and variable
exponents involved in the current system setting. Our assumptions are as
follows.

$(H_{\alpha ,\beta })$ $\frac{\alpha (x)}{p(x)}+\frac{\beta (x)}{q(x)}
<1,\forall x\in \overline{\Omega }$.

$(H_{0})$ $F:\Omega \times \mathbb{R}\times \mathbb{R}\rightarrow \mathbb{R}
$ is $C^{1}$ continous and
\begin{equation*}
\left\vert F_{u}(x,u,v)u\right\vert +\left\vert F_{v}(x,u,v)v\right\vert
\leq C(1+\left\vert u\right\vert ^{\gamma (x)}+\left\vert v\right\vert
^{\delta (x)}),\forall (x,u,v)\in \Omega \times \mathbb{R},
\end{equation*}
where $\gamma ,\delta \in C(\overline{\Omega })$ and $p(x)<\gamma
(x)<p^{\ast }(x),q(x)<\delta (x)<q^{\ast }(x)$ where $p^{\ast }(x)$ and $
q^{\ast }(x)$ are defined as
\begin{equation*}
p^{\ast }(x)=
\begin{cases}
\frac{Np(x)}{N-p(x)},\,\,\mbox{if}\,\, p(x)<N \\
+\infty, \,\,\quad\,\mbox{if}\,\, p(x)\geq N
\end{cases}
,\,\, q^{\ast }(x)=
\begin{cases}
\frac{Nq(x)}{N-q(x)},\,\,\mbox{if}\,\, q(x)<N \\
+\infty, \,\,\quad\,\mbox{if}\,\, q(x)\geq N
\end{cases}
\end{equation*}

$(H_{1})$ There exists constants $M,C_{1},C_{2}>0$, $a(\cdot )>p(\cdot )$
and $b(\cdot )>q(\cdot )$ on $\overline{\Omega }$ such that
\begin{eqnarray*}
&&C_{1}\left\vert u\right\vert ^{p(x)}[\ln (e+\left\vert u\right\vert
)]^{a(x)-1}+C_{1}\left\vert v\right\vert ^{q(x)}[\ln (e+\left\vert
v\right\vert )]^{b(x)-1} \\
&\leq &C_{2}\left( \frac{F_{u}(x,u,v)u}{\ln (e+\left\vert u\right\vert )}+
\frac{F_{v}(x,u,v)v}{\ln (e+\left\vert v\right\vert )}\right) \\
&\leq &\frac{1}{p(x)}F_{u}(x,u,v)u+\frac{1}{q(x)}F_{v}(x,u,v)v-F(x,u,v),
\forall \left\vert u\right\vert +\left\vert v\right\vert \geq M,\forall x\in
\Omega .
\end{eqnarray*}

$(H_{2})$ $F(x,u,v)=o(\left\vert u\right\vert ^{p(x)}+\left\vert
v\right\vert ^{q(x)})$ uniformly for $x\in \Omega $ as $u,v\rightarrow 0$.

$(H_{3})$ $F$ satisfies $F_{u}(x,0,v)=0,F_{v}(x,u,0)=0$, $\forall x\in
\overline{\Omega }$, $\forall u,v\in\mathbb{R}$.

$(H_{4})$ $F(x,-u,-v)=F(x,u,v),$ $\forall x\in \overline{\Omega },$ $\forall
u,v\in \mathbb{R}$.

$(H_{p,q})$ There are vectors $l_{p},l_{q}\in \mathbb{R}^{N}\backslash \{0\}
$ such that for any $x\in \Omega $, $\phi _{p}(t)=p(x+tl_{p})$ is monotone
for $t\in I_{x,p}(l)=\{t\mid x+tl_{p}\in \Omega \}$, and $\phi
_{q}(t)=q(x+tl_{q})$ is monotone for $t\in I_{x,q}(l)=\{t\mid x+tl_{q}\in
\Omega \}$.

To gain a first understanding, we briefly comment on some of the above
assumptions. $(H_{0})$ means that the nonlinearity $F$ has a subcritical
variable growth rate in the sense of variable exponent Sobolev embedding and
in the current system of elliptic equations setting. $(H_{1})$ and $(H_{2})$
describe the far and near field behaviors of the nonlinearity $F$. Notice
that in the current setting, the far field behavior $(H_{1})$ is more
involved. We emphasize that $(H_{p,q})$ is crucial for our later arguments,
for it guarantees that the Rayleigh quotients for $-\Delta_{p(x)}$ and $
-\Delta_{q(x)}$ (see \eqref{d2} for $-\Delta_{p(x)}$) are positive. Finally,
the assumptions $(H_0)$-$(H_4)$ on the nonlinearity $F$ are consistent,
which can be seen by the following example:
\begin{equation*}
F(x,u,v)=\left\vert u\right\vert ^{p(x)}[\ln (1+\left\vert u\right\vert
)]^{a(x)}+\left\vert v\right\vert ^{q(x)}[\ln (1+\left\vert v\right\vert
)]^{b(x)}+\left\vert u\right\vert ^{\theta _{1}(x)}\left\vert v\right\vert
^{\theta _{2}(x)}\ln (1+\left\vert u\right\vert )\ln (1+\left\vert
v\right\vert ),
\end{equation*}
where $1<\theta _{1}(x)<p(x),1<\theta _{2}(x)<q(x),\frac{\theta _{1}(x)}{p(x)
}+\frac{\theta _{2}(x)}{q(x)}=1,\forall x\in \overline{\Omega }$. In
addition, $F$ does not satisfy the Ambrosetti-Rabinowitz condition.

Now we are in a position to state our main results.

\begin{theorem}\label{thm1}
If $\lambda $ is small enough and the assumptions (H$_{\alpha
,\beta }$), $(H_{0})$, (H$_{2}$)-(H$_{3}$) and (H$_{p,q}$) hold, then the
problem $(P)$ has at least four nontrivial solutions each with constant sign
respectively.
\end{theorem}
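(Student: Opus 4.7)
\textbf{Proof Plan for Theorem \ref{thm1}.} The plan is to produce four sign-constant solutions by running a truncated Mountain Pass argument separately in each of the four sign quadrants $(\sigma_1,\sigma_2)\in\{+,-\}^2$. For fixed $(\sigma_1,\sigma_2)$, I would modify the natural energy functional
\begin{equation*}
\Phi(u,v)=\int_\Omega\frac{|\nabla u|^{p(x)}}{p(x)}dx+\int_\Omega\frac{|\nabla v|^{q(x)}}{q(x)}dx-\lambda\int_\Omega |u|^{\alpha(x)}|v|^{\beta(x)}dx-\int_\Omega F(x,u,v)dx
\end{equation*}
by replacing the nonlinear terms with $|u^{\sigma_1}|^{\alpha(x)}|v^{\sigma_2}|^{\beta(x)}$ and $F(x,u^{\sigma_1},v^{\sigma_2})$, where $u^+$ and $u^-$ denote the positive and negative parts. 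Hypothesis $(H_3)$ is what makes this truncation compatible with the Euler--Lagrange equations: it guarantees that if the resulting functional has a critical point, then the negative (resp.\ positive) part of the appropriate coordinate vanishes, so the critical point automatically has the prescribed sign and in fact solves the original system $(P)$. Working in the product Sobolev space $X:=W_0^{1,p(\cdot)}(\Omega)\times W_0^{1,q(\cdot)}(\Omega)$, I thus reduce the problem to producing a nontrivial Mountain Pass critical point of each of the four truncated functionals $\Phi_{\sigma_1\sigma_2}$.

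Next I would verify the Mountain Pass geometry. The local behavior at the origin comes from $(H_2)$ together with $(H_{\alpha,\beta})$ and the variable-exponent Sobolev embedding ($(H_0)$): since $\frac{\alpha(x)}{p(x)}+\frac{\beta(x)}{q(x)}<1$, Young's inequality splits the coupling term into a small piece of $|u|^{p(x)}+|v|^{q(x)}$, and choosing $\lambda$ small enough together with $(H_{p,q})$ (which, via \eqref{d2} applied to both exponents, yields strictly positive Rayleigh quotients, hence coercive lower bounds) makes $\Phi_{\sigma_1\sigma_2}$ bounded below by a positive constant on a small sphere in $X$. For the far point, hypothesis $(H_1)$ forces the logarithmically enhanced power growth $F(x,u,v)\gtrsim |u|^{p(x)}[\ln(e+|u|)]^{a(x)}+|v|^{q(x)}[\ln(e+|v|)]^{b(x)}$ with $a(\cdot)>p(\cdot)$, $b(\cdot)>q(\cdot)$, so along a ray $(tu_0,tv_0)$ with $u_0^{\sigma_1},v_0^{\sigma_2}$ nontrivial the nonlinear part dominates the quadratic-type principal terms and $\Phi_{\sigma_1\sigma_2}(tu_0,tv_0)\to-\infty$.

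The central difficulty, and the step I expect to carry the most weight, is verifying the Cerami compactness condition for $\Phi_{\sigma_1\sigma_2}$ without the Ambrosetti--Rabinowitz condition. For a Cerami sequence $(u_n,v_n)$, I would first establish boundedness using the right-hand inequality in $(H_1)$: it gives a pointwise bound
\begin{equation*}
\tfrac{1}{p(x)}F_u(x,u,v)u+\tfrac{1}{q(x)}F_v(x,u,v)v-F(x,u,v)\geq C_1\bigl(|u|^{p(x)}[\ln(e+|u|)]^{a(x)-1}+|v|^{q(x)}[\ln(e+|v|)]^{b(x)-1}\bigr),
\end{equation*}
and combining this with the Cerami relation $\Phi(u_n,v_n)-\langle\Phi'(u_n,v_n),(u_n/p(\cdot),v_n/q(\cdot))\rangle\to c$ (using that $\|\Phi'(u_n,v_n)\|(1+\|(u_n,v_n)\|)\to 0$) produces a uniform control on the logarithmically weighted norms. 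A contradiction argument on the rescaled sequence $(u_n,v_n)/\|(u_n,v_n)\|$, together with a careful splitting of $\Omega$ into the region where $(u_n,v_n)/\|(u_n,v_n)\|$ is nontrivial in the limit and its complement, then yields boundedness; here the monotonicity assumption $(H_{p,q})$ is essential, since it lets one localize along the one-dimensional slices $t\mapsto x+tl_p$ and $t\mapsto x+tl_q$ and replace infima/suprema of $p$ and $q$ by pointwise values. Once boundedness is secured, the $(S_+)$ property of $-\Delta_{p(\cdot)}$ and $-\Delta_{q(\cdot)}$ together with the compact Sobolev embedding $(H_0)$ upgrades weak convergence to strong convergence in $X$.

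Finally, the Mountain Pass theorem produces a critical point $(u_{\sigma_1\sigma_2},v_{\sigma_1\sigma_2})$ of $\Phi_{\sigma_1\sigma_2}$ at positive energy, hence nontrivial. Testing the Euler--Lagrange equation against $(u_{\sigma_1\sigma_2}^{-\sigma_1},0)$ and $(0,v_{\sigma_1\sigma_2}^{-\sigma_2})$ and using $(H_3)$ forces the wrong-sign parts to vanish, so $(u_{\sigma_1\sigma_2},v_{\sigma_1\sigma_2})$ has the prescribed constant sign and solves the original problem $(P)$. Running this for the four choices of $(\sigma_1,\sigma_2)$ yields the four nontrivial solutions with constant sign.
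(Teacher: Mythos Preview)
Your proposal has a genuine gap: you invoke hypothesis $(H_1)$ twice---once for the far point of the Mountain Pass geometry and once for the Cerami condition---but $(H_1)$ is \emph{not} among the hypotheses of Theorem~\ref{thm1}. The theorem assumes only $(H_{\alpha,\beta})$, $(H_0)$, $(H_2)$, $(H_3)$, and $(H_{p,q})$. Without $(H_1)$ you have no logarithmic lower bound on $F$, so you cannot show $\Phi_{\sigma_1\sigma_2}(tu_0,tv_0)\to-\infty$ as $t\to\infty$, and you cannot run the Cerami argument of Lemma~3.3 (which explicitly requires $(H_1)$). Your Mountain Pass scheme therefore does not close.

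The paper's route is quite different and avoids both obstacles. It keeps your truncation idea and your local estimate (positivity of $\varphi^{++}$ on a small sphere $\|(u,v)\|=r$, using $(H_0)$, $(H_2)$, $(H_{p,q})$), but then exploits the \emph{sublinear} coupling term: since $\frac{\alpha(x)}{p(x)}+\frac{\beta(x)}{q(x)}<1$, choosing $u_0,v_0>0$ supported in a small ball and scaling anisotropically as $(t^{1/p^-_{\Omega_0}}u_0,\,t^{1/q^-_{\Omega_0}}v_0)$ with $t\to 0^+$ makes $-\lambda\int|u|^{\alpha}|v|^{\beta}$ dominate, so $\varphi^{++}$ becomes negative \emph{inside} the ball $\|(u,v)\|<r$. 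Thus $\inf_{\|(u,v)\|\le r}\varphi^{++}<0<\varphi^{++}|_{\|(u,v)\|=r}$, and direct minimization (weak lower semicontinuity of $\Phi$ plus compactness of $\Psi^{++}$) yields an interior local minimizer with negative energy. No Mountain Pass, no Cerami condition, and no $(H_1)$ are needed. Your use of $(H_3)$ to force the sign is correct and matches the paper.

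What you have sketched is essentially the Mountain Pass half of the proof of Theorem~1.2, which \emph{does} assume $(H_1)$ and produces additional solutions at positive energy on top of the four negative-energy local minimizers from Theorem~\ref{thm1}.
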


\begin{theorem}\label{thm2} 
If $\lambda $ is small enough and the assumptions $(H_{\alpha
,\beta })$, $(H_{0})$-$(H_{3})$ and $(H_{p,q})$ hold, then the problem $(P)$
has at least six nontrivial solutions each with constant sign respectively.
\end{theorem}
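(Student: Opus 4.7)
The plan is to build on Theorem~\ref{thm1}: the four constant-sign solutions it produces remain valid under the slightly stronger hypotheses of Theorem~\ref{thm2}, and the additional assumption $(H_1)$ will be used to produce two further constant-sign solutions by a mountain-pass argument. Thus the proof naturally splits into three steps: reproduce the four solutions of Theorem~\ref{thm1}, verify the Cerami compactness condition $(C)_c$ for the full energy functional $\Phi$ associated with $(P)$, and apply a mountain-pass theorem in two appropriately chosen sign cones to exhibit the remaining pair of solutions.

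For the first step I would observe that $(H_1)$ together with continuity still entails $(H_0)$-type control globally, so the localization/decomposition argument of Theorem~\ref{thm1}---minimizing truncated functionals $\Phi^{\sigma_1\sigma_2}$ on each of the four sign sectors $\{\sigma_1 u\geq 0,\,\sigma_2 v \geq 0\}$ with $\sigma_i\in\{+,-\}$, using the positivity of the Rayleigh quotients for $-\Delta_{p(x)}$ and $-\Delta_{q(x)}$ guaranteed by $(H_{p,q})$ together with smallness of $\lambda$ and $(H_{\alpha,\beta})$---transplants without modification and yields four distinct constant-sign local minimizers. The second and pivotal step is the verification of the Cerami condition for $\Phi$. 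Given a sequence with $\Phi(u_n,v_n)\to c$ and $(1+\|(u_n,v_n)\|)\|\Phi'(u_n,v_n)\|\to 0$, I would test the derivative against the weighted pair $\bigl(u_n/\ln(e+|u_n|),\,v_n/\ln(e+|v_n|)\bigr)$ and compare against the energy identity. The middle inequality in $(H_1)$ converts the resulting expression into a control on
\begin{equation*}
\int_\Omega |u_n|^{p(x)}[\ln(e+|u_n|)]^{a(x)-1}\,dx+\int_\Omega |v_n|^{q(x)}[\ln(e+|v_n|)]^{b(x)-1}\,dx,
\end{equation*}
and since $a(x)>p(x)$ and $b(x)>q(x)$ pointwise on $\overline\Omega$, this is strictly stronger than an $L^{p(\cdot)}+L^{q(\cdot)}$ bound; combining it with the Rayleigh-quotient positivity from $(H_{p,q})$ yields a uniform bound on $\|u_n\|_{1,p(\cdot)}+\|v_n\|_{1,q(\cdot)}$. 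The standard $(S_+)$ property of the $p(x)$-Laplacian on the product Sobolev space, together with the subcritical growth in $(H_0)$, then furnishes a strongly convergent subsequence.

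For the third step I would work with the truncated functional $\Phi^{++}$ supported on the positive cone $\{u\geq 0,\,v\geq 0\}$. By $(H_2)$ and the variable-exponent Sobolev embedding, there exist $\rho,a_0>0$ such that $\Phi^{++}\geq a_0$ on the sphere of radius $\rho$ in $W^{1,p(\cdot)}_0(\Omega)\times W^{1,q(\cdot)}_0(\Omega)$; and by $(H_1)$ one exhibits a ray along which $\Phi^{++}\to-\infty$, using that $F$ eventually dominates $|u|^{p(x)}+|v|^{q(x)}$ through the logarithmic factor. The mountain-pass theorem with $(PS)_c$ replaced by the $(C)_c$ verified in Step~2 then produces a critical point of $\Phi^{++}$ at a level $\geq a_0$, strictly above the local-minimum level from Step~1; by $(H_3)$ and the construction of the truncation, it is a genuine solution of $(P)$ with $u\geq 0,\,v\geq 0$. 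Repeating the argument on the opposite cone gives a sixth, distinct solution, completing the count. The main obstacle is Step~2: the classical Ambrosetti--Rabinowitz boundedness trick is unavailable, and the logarithmic refinement in $(H_1)$ has to be exploited through a delicate interplay between the Cerami weight $(1+\|(u_n,v_n)\|)$ and the non-homogeneity of the modular $\int|u|^{p(x)}\,dx$, which does not convert to the Luxemburg norm by any fixed power---this is the novel method advertised in the introduction.
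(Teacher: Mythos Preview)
Your overall architecture matches the paper's: invoke Theorem~\ref{thm1} for four negative-energy constant-sign solutions, verify the Cerami condition (the paper's Lemma~3.3), and run the mountain-pass lemma on the truncated functionals $\varphi^{++}$ and $\varphi^{--}$ to obtain two further critical points at strictly positive energy, which $(H_3)$ forces into the first and third quadrants. Two of your steps, however, do not close as written.

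In Step~2 the closing mechanism is backwards. After controlling $\int_\Omega |u_n|^{p(x)}[\ln(e+|u_n|)]^{a(x)-1}\,dx$ you invoke the Rayleigh-quotient positivity from $(H_{p,q})$ to upgrade this to a $W^{1,p(\cdot)}_0$ bound. But $\lambda_{p(\cdot)}>0$ bounds $\int|u|^{p(x)}$ by $\int|\nabla u|^{p(x)}$, not the reverse; an $L^{p(\cdot)}$ bound gives no control on the gradient modular. The paper instead first shows (combining the test functions $(u_n/p(x),v_n/q(x))$ and $(u_n/\ln(e+|u_n|),v_n/\ln(e+|v_n|))$ with $(H_1)$) that $\int F_u(x,u_n,v_n)u_n/\ln(e+|u_n|)\,dx$ and its companion are bounded, then writes $\int F_u u_n=\int|F_u u_n|^{\varepsilon}[\ln(e+|u_n|)]^{1-\varepsilon}\bigl|F_u u_n/\ln(e+|u_n|)\bigr|^{1-\varepsilon}$ and uses $(H_0)$ in a H\"older-type splitting to obtain $\int|\nabla u_n|^{p(x)}+\int|\nabla v_n|^{q(x)}\le C(1+\|(u_n,v_n)\|)^{1+\varepsilon}$ for small $\varepsilon>0$. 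Since the left side grows at least like $\|(u_n,v_n)\|^{\min\{p^-,q^-\}}$, this contradicts unboundedness. The Rayleigh quotient plays no role here; in the paper $(H_{p,q})$ enters only through the local geometry in Theorem~\ref{thm1}.

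In Step~3 you have also under-rated what you call routine and over-rated Step~2 as ``the main obstacle'': exhibiting the ray to $-\infty$ is \emph{not} straightforward for variable exponents. For constant $p$ any test function gives $\varphi^{++}(tu_0,tv_0)\sim t^p\int|\nabla u_0|^p - Ct^p(\ln t)^a\int|u_0|^p\to-\infty$, but for variable $p(x)$ both $\int t^{p(x)}|\nabla u_0|^{p(x)}$ and $\int t^{p(x)}|u_0|^{p(x)}[\ln(e+t|u_0|)]^{a(x)}$ concentrate where $p(x)$ is maximal on $\mathrm{supp}\,u_0$; since $u_0$ vanishes on $\partial(\mathrm{supp}\,u_0)$ while $|\nabla u_0|$ need not, the competition is genuine. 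The paper devotes a separate technical lemma (Lemma~2.10, prepared by Lemma~2.9) to this: one picks $x_1$ with $\nabla p(x_1)\neq 0$, takes the tent function $h_1(x)=(\varepsilon-|x-x_1|)_+$, and a polar-coordinate computation exploiting the $C^1$ variation of $p$ along rays shows $\int|\nabla th_1|^{p(x)}\le C\,t^{p_{\mathrm{loc}}^{+}}/\ln t$ while $\int|th_1|^{p(x)}[\ln(e+|th_1|)]^{a(x)}\ge c(\ln t)^{\epsilon_1}t^{p_{\mathrm{loc}}^{+}}/\ln t$, whence $\varphi^{++}(th_1,th_2)\to-\infty$. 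This is where the $C^1$ regularity of $p,q$ is genuinely used and cannot be waved through with ``$F$ eventually dominates through the logarithmic factor.''
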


\begin{theorem}\label{thm3} 
If the assumptions $(H_{\alpha ,\beta })$, $(H_{0})$, $(H_{1})$
and $(H_{4})$ hold, then there are infinitely many pairs of solutions to the
problem $(P)$.
\end{theorem}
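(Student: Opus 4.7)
The plan is to apply the symmetric Fountain Theorem in its Cerami form to the associated energy functional
\begin{equation*}
\Phi(u,v) := \int_\Omega \frac{|\nabla u|^{p(x)}}{p(x)}\,dx + \int_\Omega \frac{|\nabla v|^{q(x)}}{q(x)}\,dx - \lambda \int_\Omega |u|^{\alpha(x)}|v|^{\beta(x)}\,dx - \int_\Omega F(x,u,v)\,dx
\end{equation*}
on the separable, reflexive product space $X := W_0^{1,p(\cdot)}(\Omega) \times W_0^{1,q(\cdot)}(\Omega)$. By $(H_0)$, $\Phi \in C^1(X,\mathbb{R})$ and its critical points are precisely the weak solutions of $(P)$; by $(H_4)$ together with the manifest invariance of the coupling term under $(u,v)\mapsto(-u,-v)$, $\Phi$ is even, so critical points come in $\mathbb{Z}_2$-orbits.

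I would fix a Schauder basis of $X$ and write $X=\overline{\bigoplus_{j\ge 1}X_j}$, $Y_k=\bigoplus_{j\le k}X_j$, $Z_k=\overline{\bigoplus_{j\ge k}X_j}$. The linking estimate $\inf_{Z_k\cap S_{r_k}}\Phi\to+\infty$ rests on the compact embeddings $W_0^{1,p(\cdot)}\hookrightarrow L^{\gamma(\cdot)}$ and $W_0^{1,q(\cdot)}\hookrightarrow L^{\delta(\cdot)}$ provided by $(H_0)$, the standard decay $\beta_k:=\sup_{(u,v)\in Z_k,\,\|(u,v)\|_X=1}(|u|_{\gamma(\cdot)}+|v|_{\delta(\cdot)})\to 0$, and $(H_{\alpha,\beta})$ (via a three-exponent H\"older inequality) to absorb the coupling term at sub-principal order. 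For the cap on $Y_k$, integrating the rightmost inequality of $(H_1)$ and using $a(x)>p(x)$, $b(x)>q(x)$ shows $F(x,u,v)\gtrsim |u|^{p(x)}[\ln(e+|u|)]^{a(x)}+|v|^{q(x)}[\ln(e+|v|)]^{b(x)}$ for $|u|+|v|$ large, which is strictly super-$(p(x),q(x))$; since all norms on the finite-dimensional $Y_k$ are equivalent, $\Phi(tu,tv)\to -\infty$ as $t\to\infty$, giving $\max_{Y_k\cap S_{\rho_k}}\Phi\le 0$ for some $\rho_k>r_k$.

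The crux is verifying the Cerami condition $(C)_c$ for every $c\in\mathbb{R}$. For a Cerami sequence $(u_n,v_n)$, the novel device advertised in the introduction is to test $\Phi'(u_n,v_n)$ against the weighted pair $\bigl(u_n/p(\cdot),\,v_n/q(\cdot)\bigr)$: the coupling contribution collapses to $\lambda\int[1-\alpha(x)/p(x)-\beta(x)/q(x)]|u_n|^{\alpha(x)}|v_n|^{\beta(x)}\,dx\ge 0$ by $(H_{\alpha,\beta})$, the principal parts yield $\int|\nabla u_n|^{p(x)}/p(x)+\int|\nabla v_n|^{q(x)}/q(x)$ modulo lower-order terms coming from $\nabla(1/p(x))$ and $\nabla(1/q(x))$, and the $F$-contribution equals $\int[\tfrac{1}{p(x)}F_u(x,u_n,v_n)u_n+\tfrac{1}{q(x)}F_v(x,u_n,v_n)v_n-F(x,u_n,v_n)]\,dx$, which by $(H_1)$ dominates
\begin{equation*}
C\int_\Omega |u_n|^{p(x)}[\ln(e+|u_n|)]^{a(x)-1}\,dx + C\int_\Omega |v_n|^{q(x)}[\ln(e+|v_n|)]^{b(x)-1}\,dx.
\end{equation*}
Arguing by contradiction with $\|(u_n,v_n)\|_X\to\infty$ and normalizing $(\hat u_n,\hat v_n):=(u_n,v_n)/\|(u_n,v_n)\|_X$, I would exploit this log-weighted lower bound together with the strict positivity of the Rayleigh quotients in \eqref{d2} (secured by $(H_{p,q})$) to rule out both the vanishing and non-vanishing alternatives for $(\hat u_n,\hat v_n)$. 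This step --- where the logarithmic refinement in $(H_1)$ replaces the traditional Ambrosetti--Rabinowitz coercivity while accommodating the non-homogeneity of the $p(x)$- and $q(x)$-Laplacians and the variable-exponent coupling --- is the genuine technical obstacle. Once boundedness is secured, the compact lower-order embeddings together with the standard $(S_+)$-property of $-\Delta_{p(\cdot)}$ and $-\Delta_{q(\cdot)}$ deliver a strongly convergent subsequence, and the Fountain Theorem then produces an unbounded sequence of critical values $c_k\to+\infty$, hence infinitely many pairs $\pm(u_k,v_k)$ of distinct weak solutions of $(P)$.
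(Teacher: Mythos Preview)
Your overall architecture (even functional, Cerami condition, Fountain-type linking) matches the paper's, and your $Z_k$-estimate is fine. But there are two genuine gaps.

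\textbf{The cap on the finite-dimensional space.} You take $V_k^-=Y_k$ from a Schauder basis and argue that, because all norms on $Y_k$ are equivalent and $F(x,u,v)\gtrsim |u|^{p(x)}[\ln(e+|u|)]^{a(x)}+|v|^{q(x)}[\ln(e+|v|)]^{b(x)}$, one has $\varphi(tu,tv)\to-\infty$. This step fails for variable exponents. Norm equivalence gives you uniform positive lower bounds on $\int_\Omega|u|^{p(x)}dx$ over the unit sphere of $Y_k$, but it does \emph{not} let you compare $\int_\Omega t^{p(x)}|\nabla u|^{p(x)}dx$ with $\int_\Omega t^{p(x)}|u|^{p(x)}[\ln(e+t|u|)]^{a(x)}dx$ as $t\to\infty$: both integrals are dominated by the region where $p(x)$ is near its supremum over $\mathrm{supp}\,u$, and if that supremum is attained on $\partial(\mathrm{supp}\,u)$ (e.g.\ on $\partial\Omega$), then $|u|$ vanishes there while $|\nabla u|$ need not. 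The logarithmic gain $[\ln t]^{a(x)}$ is not enough, in general, to beat the loss coming from $|u|^{p(x)}\to 0$. This is precisely why the paper does \emph{not} use $Y_k$: it builds $V_k^-$ by hand from cone functions $h_n(x)=(\varepsilon_n-|x-x_n|)_+$, $h_n^\ast(x)=(\varepsilon_n-|x-y_n|)_+$ centred at points with $\nabla p(x_n)\neq0$, $\nabla q(y_n)\neq0$, and then invokes Lemma~2.10, whose proof is a delicate polar-coordinate computation exploiting the monotonicity of $p$ along $\nabla p(x_n)$ to show that the $F$-term beats the gradient term by a factor $(\ln t)^{\epsilon_1}$. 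Without this construction your condition $(3^0)$ is unverified.

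\textbf{The Cerami condition.} You invoke the positivity of the Rayleigh quotients ``secured by $(H_{p,q})$'' and propose a normalization/vanishing dichotomy. But $(H_{p,q})$ is \emph{not} a hypothesis of Theorem~\ref{thm3}, so you cannot use it. The paper's Lemma~3.3 proves the Cerami condition from $(H_{\alpha,\beta})$, $(H_0)$, $(H_1)$ alone, without normalization and without the eigenvalue bound. After the test against $(u_n/p(\cdot),v_n/q(\cdot))$ that you correctly identify, the paper's key additional move is to test against $u_n/\ln(e+|u_n|)$ and $v_n/\ln(e+|v_n|)$; combining this with $(H_1)$ yields a self-bounding inequality for $\int|u_n|^{p(x)}[\ln(e+|u_n|)]^{a(x)-1}+\int|v_n|^{q(x)}[\ln(e+|v_n|)]^{b(x)-1}$, and then an interpolation (writing $|F_u u|=|F_u u|^\varepsilon[\ln(e+|u|)]^{1-\varepsilon}\cdot|F_u u/\ln(e+|u|)|^{1-\varepsilon}$) gives $\int|\nabla u_n|^{p(x)}+\int|\nabla v_n|^{q(x)}\le C(1+\|(u_n,v_n)\|)^{1+\varepsilon}$, contradicting unboundedness. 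Your dichotomy sketch, even if it could be made to work, would prove a different theorem (one requiring $(H_{p,q})$).
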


\begin{remark}
The solutions we obtained in Theorem \ref{thm1} and \ref{thm2} to $(P)$ are
of constant sign. Meanwhile, we do not need any monotonicity assumptions on
the nonlinearity $F(x,\cdot ,\cdot )$.
\end{remark}

This rest of the paper is organized as follows. In Section 2, we do some
functional-analytic preparations. In Section 3, we give the proofs of our
main results.

\section{Functional-analytic Preliminary}

Throughout this paper, we will use letters $c,c_{i},C,C_{i}$, $i=1,2,...$ to
denote generic positive constants which may vary from line to line, and we
will specify them whenever it is necessary.

In order to discuss problem $(P)$, we shall discuss the functional analytic
framework. First, we present some results about space $W_{0}^{1,p(\cdot
)}(\Omega )$ which we call variable exponent Sobolev space. These results on
the variable exponent spaces will be used later (for details, see \cite{j5},
\cite{e9}, \cite{e12}, \cite{e20}, \cite{e26}). We denote $
C(\overline\Omega) $ the space of continuous functions on $\overline\Omega$
with the usual uniform norm, and
\begin{equation*}
C_{+}(\overline{\Omega }) =\left\{ h\left\vert h\in C(\overline{\Omega })
\text{, }h(x)>1\text{ for }x\in \overline{\Omega }\right. \right\}.
\end{equation*}
For $h=h(\cdot)\in C(\Omega)$, we denote $h^+:=\max_{x\in\bar{\Omega}}\,
h(x) $ and $h^-:=\min_{x\in\bar{\Omega}}\, h(x)$. For $p=p(\cdot)\in
C_{+}(\overline\Omega)$, we introduce
\begin{equation*}
L^{p(\cdot )}(\Omega ) =\left\{ u\mid u\text{ is a measurable real-value
function, }\int_{\Omega }\left\vert u(x)\right\vert ^{p(x)}dx<\infty
\right\} .
\end{equation*}

When equipped with the Luxemberg norm

\begin{center}
$\left\vert u\right\vert _{p(\cdot )}=$ $\inf \left\{ \mu >0\left\vert
\int_{\Omega }\left\vert \frac{u(x)}{\mu }\right\vert ^{p(x)}dx\leq 1\right.
\right\} $,
\end{center}

($L^{p(\cdot )}(\Omega )$, $\left\vert \cdot \right\vert _{p(\cdot )}$)
becomes a Banach space and it is called variable exponent Lebesgue space.
For the variable exponent Lebesgue spaces, we have the following H$\ddot{o}$lder type inequality and simple embedding relation.

\begin{proposition}
(see \cite{j5}, \cite{e9}, \cite{e12}). i) The space $(L^{p(\cdot )}(\Omega
),\left\vert \cdot \right\vert _{p(\cdot )})$ is a separable, uniform convex
Banach space, and its conjugate space is $L^{(p(\cdot ))^{0}}(\Omega )$,
where $(p(\cdot ))^{0}:=\frac{p(\cdot )}{p(\cdot )-1}$ is the conjugate
function of $p(\cdot )$. For any $u\in L^{p(\cdot )}(\Omega )$ and $v\in
L^{(p(\cdot ))^{0}}(\Omega )$, we have
\begin{equation*}
\left\vert \int_{\Omega }uvdx\right\vert \leq (\frac{1}{p^{-}}+\frac{1}{q^{-}
})\left\vert u\right\vert _{p(\cdot )}\left\vert v\right\vert _{(p(\cdot
))^{0}}.
\end{equation*}

ii) If $p_{1},$ $p_{2}\in C_{+}(\overline{\Omega })$, $p_{1}(x)\leq p_{2}(x)$
for any $x\in \overline{\Omega },$ then $L^{p_{2}(\cdot )}(\Omega )\subset
L^{p_{1}(\cdot )}(\Omega ),$ and the imbedding is continuous.
\end{proposition}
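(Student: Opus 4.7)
The plan is to establish the Banach space structure of $L^{p(\cdot)}(\Omega)$ first, then deduce the finer properties (separability, uniform convexity, duality, Hölder inequality) via the modular $\rho_{p(\cdot)}(u)=\int_\Omega |u(x)|^{p(x)}\,dx$, and finally deduce the comparison embedding from Young's inequality. Throughout, the key tool is the interplay between the Luxemburg norm $|\cdot|_{p(\cdot)}$ and the modular $\rho_{p(\cdot)}$ on the bounded domain $\Omega$: they induce the same topology, and one bounds the other depending on whether $|u|_{p(\cdot)}\ge 1$ or $\le 1$.

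For part (i), I would first verify that $|\cdot|_{p(\cdot)}$ is indeed a norm (absolute homogeneity is immediate from its definition; the triangle inequality follows by the convexity of $t\mapsto t^{p(x)}$, testing the defining inequality with $\mu = |u|_{p(\cdot)} + |v|_{p(\cdot)}$). Completeness I would prove in the standard Riesz-Fischer style: extract a subsequence of a Cauchy sequence such that $|u_{n_{k+1}}-u_{n_k}|_{p(\cdot)} \le 2^{-k}$, use monotone convergence on the modular of $\sum_k |u_{n_{k+1}}-u_{n_k}|$ to produce an a.e.\ pointwise limit, and close using Fatou's lemma on the modular. Separability I would obtain by noting that step functions on a countable algebra of measurable sets, with rational coefficients, are dense in $L^{p(\cdot)}$, which reduces to approximating any $u\in L^{p(\cdot)}$ first by bounded functions (truncation) and then by simple functions, controlling the modular along the way using dominated convergence. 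Uniform convexity I would establish via variable-exponent Clarkson-type inequalities: since $p^->1$ on the compact $\overline{\Omega}$, one has $p^-\ge 1+\delta_0$ for some $\delta_0>0$, so a uniform Clarkson inequality holds pointwise and integrates to a Clarkson inequality for the modular, which then translates into a quantitative uniform convexity estimate for $|\cdot|_{p(\cdot)}$ by a standard scaling argument using that $|u|_{p(\cdot)}=1$ iff $\rho_{p(\cdot)}(u)=1$. The identification of the dual proceeds in two steps: the Hölder-type inequality shows the natural map $L^{(p(\cdot))^0}\to (L^{p(\cdot)})^*$ is bounded; surjectivity follows from a Radon-Nikodym / Riesz representation argument on finite-measure sets, using reflexivity (a consequence of uniform convexity and Milman-Pettis) to upgrade weak representation to a genuine representative. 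The Hölder-type inequality itself I would derive from the pointwise Young inequality $ab\le \frac{a^{p(x)}}{p(x)}+\frac{b^{(p(x))^0}}{(p(x))^0}$ applied to $a=|u(x)|/|u|_{p(\cdot)}$, $b=|v(x)|/|v|_{(p(\cdot))^0}$, and integrating.

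For part (ii), I would argue directly at the modular level. Given $u\in L^{p_2(\cdot)}(\Omega)$, Young's inequality in the form $t^{p_1(x)}\le t^{p_2(x)}+1$ (valid for $t\ge 0$ whenever $p_1(x)\le p_2(x)$) combined with $|\Omega|<\infty$ yields $\rho_{p_1(\cdot)}(u)\le \rho_{p_2(\cdot)}(u)+|\Omega|$. Converting back to the Luxemburg norm via the standard inequality $\min(|u|_{p(\cdot)}^{p^-},|u|_{p(\cdot)}^{p^+})\le \rho_{p(\cdot)}(u)\le \max(|u|_{p(\cdot)}^{p^-},|u|_{p(\cdot)}^{p^+})$ gives a constant $C=C(|\Omega|,p_1^\pm,p_2^\pm)$ such that $|u|_{p_1(\cdot)}\le C(|u|_{p_2(\cdot)}+1)$; homogeneity and a scaling argument then upgrade this to the continuous linear embedding.

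The main obstacle will be uniform convexity, because the modular $\rho_{p(\cdot)}$ does not behave as a norm power, so the classical proof for constant $p$ does not transcribe verbatim. The correct route is to prove a modular Clarkson inequality first and then translate it to a norm Clarkson inequality by working on the unit sphere $\{|u|_{p(\cdot)}=1\}$, where norm and modular are interchangeable; this requires careful handling of the cases $p(x)\ge 2$ and $1<p(x)<2$ separately, as in the constant-exponent proof, but uniformly in $x$ using $p^->1$.
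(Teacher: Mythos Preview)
The paper does not prove this proposition at all: it is stated with the citation ``(see \cite{j5}, \cite{e9}, \cite{e12})'' and no proof is given, as these are standard facts about variable-exponent Lebesgue spaces. Your outline is correct and follows exactly the standard route taken in those references (Diening--Harjulehto--H\"ast\"o--R\r{u}\v{z}i\v{c}ka, Fan--Zhao, Fan--Zhang): Riesz--Fischer for completeness, modular Clarkson inequalities split along $\{p(x)\ge 2\}$ and $\{1<p(x)<2\}$ for uniform convexity, pointwise Young for the H\"older inequality, and the modular comparison $t^{p_1(x)}\le t^{p_2(x)}+1$ together with $|\Omega|<\infty$ for the embedding in (ii). The scaling step you mention at the end of (ii) is fine: normalizing so that $|u|_{p_2(\cdot)}=1$ gives $\rho_{p_2(\cdot)}(u)=1$, hence $\rho_{p_1(\cdot)}(u)\le 1+|\Omega|$, and then $|u|_{p_1(\cdot)}\le (1+|\Omega|)^{1/p_1^-}$ by the norm--modular inequalities, which yields the linear bound by homogeneity of the Luxemburg norm.
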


Denote $Y=\underset{i=1}{\overset{k}{\prod }}L^{p_{i}(\cdot )}(\Omega )$
with the norm
\begin{equation*}
\parallel y\parallel _{Y}=\overset{k}{\underset{i=1}{\sum }}\left\vert
y^{i}\right\vert _{p_{i}(\cdot )}\text{, }\forall y=(y^{1},\cdots ,y^{k})\in
Y\text{,}
\end{equation*}
where $p_{i}(x)\in C_{+}(\overline{\Omega })$, $i=1,\cdots ,m$, then $Y$ is
a Banach space. The following proposition can be regarded as a vectorial
generalization of the classical proposition on the Nemytsky operator.

\begin{proposition}
\label{2.2} Let $f(x,y):$ $\Omega \times\mathbb{R}
^{k}\rightarrow\mathbb{R}
^{m}$ be a Caratheodory function, i.e., $f$ satisfies

(i) for a.e. $x\in \Omega $, $y\rightarrow f(x,y)$ is a continuous function
from $\mathbb{R}^{k}\ $\ to $\mathbb{R}^{m}$,

(ii) for any $y\in\mathbb{R}^{k}$, $x\rightarrow f(x,y)$ is measurable.

If there exist $\eta (x),p_{1}(x),\cdots ,p_{k}(x)\in C_{+}(\overline{\Omega
})$, $h(x)\in L^{\eta (\cdot )}(\Omega )$ and positive constant $c>0$ such
that
\begin{equation*}
\left\vert f(x,y)\right\vert \leq h(x)+c\underset{i=1}{\overset{k}{\sum }}
\left\vert y_{i}\right\vert ^{p_{i}(x)/\eta (x)}\text{ for any }x\in \Omega
,y\in\mathbb{R}^{k},
\end{equation*}
then the Nemytsky operator from $Y$ to $(L^{\eta (\cdot )}(\Omega ))^{m}$
defined by $(N_{f}u)(x)=f(x,u(x))$ is a continuous and bounded operator.
\end{proposition}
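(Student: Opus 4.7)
The plan is to adapt the classical Krasnoselskii proof for Nemytsky operators to the variable-exponent vectorial setting, exploiting the equivalence between modular convergence and norm convergence in $L^{\eta(\cdot)}(\Omega)$ (valid because $\eta^+<\infty$).

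First I would prove \emph{boundedness} of $N_f\colon Y\to (L^{\eta(\cdot)}(\Omega))^m$. Fix $u=(u^1,\dots,u^k)\in Y$. Because $\eta$ is bounded on $\overline{\Omega}$, there is an elementary constant $C$ such that $(a_0+a_1+\cdots+a_k)^{\eta(x)}\leq C\bigl(a_0^{\eta(x)}+\cdots+a_k^{\eta(x)}\bigr)$ for nonnegative $a_j$. Applying this to the growth hypothesis gives
$$
|f(x,u(x))|^{\eta(x)}\leq C\Bigl(h(x)^{\eta(x)}+\sum_{i=1}^{k}|u^i(x)|^{p_i(x)}\Bigr),
$$
and the right side is in $L^1(\Omega)$ because $h\in L^{\eta(\cdot)}(\Omega)$ and $u^i\in L^{p_i(\cdot)}(\Omega)$. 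Hence $N_f(u)\in (L^{\eta(\cdot)}(\Omega))^m$. The norm bound in terms of $\|u\|_Y$ follows from the standard relation between the modular $\rho_{\eta}(v)=\int_\Omega |v|^{\eta(x)}\,dx$ and the Luxemburg norm $|v|_{\eta(\cdot)}$ (with analogous inequalities for each $p_i$), which controls $|N_f(u)|_{\eta(\cdot)}$ by a continuous function of $|h|_{\eta(\cdot)}$ and $|u^i|_{p_i(\cdot)}$.

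For \emph{continuity}, let $u_n\to u$ in $Y$; by the ``subsequence principle'' it is enough to show that every subsequence of $\{u_n\}$ admits a further subsequence along which $N_f(u_n)\to N_f(u)$ in norm. After extracting, one can arrange that $u_n^i(x)\to u^i(x)$ pointwise almost everywhere in $\Omega$, and by a standard diagonal argument in each $L^{p_i(\cdot)}(\Omega)$ one can also produce dominating functions $g_i\in L^{p_i(\cdot)}(\Omega)$ with $|u_n^i(x)|\leq g_i(x)$ a.e. The Caratheodory property then yields $f(x,u_n(x))\to f(x,u(x))$ a.e., while the growth hypothesis provides the majorant
$$
|f(x,u_n(x))-f(x,u(x))|^{\eta(x)}\leq C\Bigl(h(x)^{\eta(x)}+\sum_{i=1}^{k}g_i(x)^{p_i(x)}+\sum_{i=1}^{k}|u^i(x)|^{p_i(x)}\Bigr),
$$
which is $L^1(\Omega)$-integrable and independent of $n$. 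Classical Lebesgue dominated convergence gives $\rho_{\eta}(N_f(u_n)-N_f(u))\to 0$, and since $\eta^+<\infty$ modular convergence to zero is equivalent to norm convergence, so $N_f(u_n)\to N_f(u)$ in $(L^{\eta(\cdot)}(\Omega))^m$.

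The main obstacle is the construction of the pointwise dominating functions $g_i\in L^{p_i(\cdot)}(\Omega)$: in the constant-exponent $L^p$ setting this is the familiar fact that every norm-convergent sequence has a subsequence controlled by a single $L^p$ function, but in the variable-exponent case one must carry it out through the modular, using completeness of $L^{p_i(\cdot)}(\Omega)$ and the modular-norm equivalence on bounded sets to select a subsequence whose modular tails are summable and hence whose partial supremum lies in $L^{p_i(\cdot)}(\Omega)$. Once this auxiliary step is in place, the rest of the argument reduces to the scalar dominated convergence theorem applied to the modular integrand, and extension from scalar to $\mathbb{R}^m$-valued $f$ is done componentwise.
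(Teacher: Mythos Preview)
The paper does not actually prove this proposition; its entire proof reads ``Similar to the proof of \cite{e39}, we omit it here,'' deferring to Chang's monograph on critical point theory. Your argument is correct and is exactly the classical Krasnoselskii proof adapted to the variable-exponent vectorial setting: bound the modular $\rho_\eta(N_f u)$ via the growth condition, then for continuity pass to a subsequence with pointwise a.e.\ convergence and $L^{p_i(\cdot)}$ majorants $g_i$, and apply dominated convergence to the modular. The step you single out as the obstacle---producing the dominating functions $g_i\in L^{p_i(\cdot)}(\Omega)$---is indeed the only nonroutine point, and your sketch (select $n_j$ with $|u^i_{n_j}-u^i|_{p_i(\cdot)}<2^{-j}$ and take $g_i=|u^i|+\sum_j|u^i_{n_j}-u^i|$) is the standard resolution; the modular--norm equivalences you rely on are recorded in the paper as Propositions~2.3 and~2.4.
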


\begin{proof}
Similar to the proof of \cite{e39}, we omit it here.
\end{proof}

The following two propositions concern the norm-module relations in the
variable exponent Lebesgue spaces. Unlike in the usual Lebesgue spaces
setting, the norm and module of a function in the variable exponent spaces
do not enjoy the usual power equality relation.

\begin{proposition}
(see \cite{e12}). If we denote
\begin{equation*}
\rho (u)=\int_{\Omega }\left\vert u\right\vert ^{p(x)}dx\text{, }\forall
u\in L^{p(\cdot )}(\Omega ),
\end{equation*}
then there exists a $\xi \in \overline{\Omega }$ such that $\left\vert
u\right\vert _{p(\cdot )}^{p(\xi )}=\int_{\Omega }\left\vert u\right\vert
^{p(x)}dx$ and

i) $\left\vert u\right\vert _{p(\cdot )}<1(=1;>1)\Longleftrightarrow \rho
(u)<1(=1;>1);$

ii) $\left\vert u\right\vert _{p(\cdot )}>1\Longrightarrow \left\vert
u\right\vert _{p(\cdot )}^{p^{-}}\leq \rho (u)\leq \left\vert u\right\vert
_{p(\cdot )}^{p^{+}};$ $\left\vert u\right\vert _{p(\cdot
)}<1\Longrightarrow \left\vert u\right\vert _{p(\cdot )}^{p^{-}}\geq \rho
(u)\geq \left\vert u\right\vert _{p(\cdot )}^{p^{+}};$

iii) $\left\vert u\right\vert _{p(\cdot )}\rightarrow 0\Longleftrightarrow
\rho (u)\rightarrow 0;$ $\left\vert u\right\vert _{p(\cdot )}\rightarrow
\infty \Longleftrightarrow \rho (u)\rightarrow \infty $.
\end{proposition}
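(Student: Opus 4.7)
The plan is to derive all four conclusions from a single key identity: for every non-zero $u \in L^{p(\cdot)}(\Omega)$, the Luxemburg norm is pinned down by the equality $\int_{\Omega}\left(|u(x)|/|u|_{p(\cdot)}\right)^{p(x)}dx=1$. Once this is established, items (i), (ii), (iii) and the existence of $\xi$ all follow by pointwise comparison arguments together with the intermediate value theorem.

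First I would analyze the auxiliary function $\Phi(\mu):=\int_{\Omega}|u(x)/\mu|^{p(x)}\,dx$ for $\mu>0$. Because $p\in C_{+}(\overline{\Omega})$ with $\overline{\Omega}$ compact, the exponent satisfies $1<p^{-}\le p^{+}<\infty$. Dominated convergence then yields that $\Phi$ is continuous on $(0,\infty)$; for $u\not\equiv 0$ it is strictly decreasing; and $\Phi(\mu)\to+\infty$ as $\mu\to 0^{+}$ while $\Phi(\mu)\to 0$ as $\mu\to+\infty$. Combined with the definition $|u|_{p(\cdot)}=\inf\{\mu>0:\Phi(\mu)\le 1\}$, continuity forces $\Phi(|u|_{p(\cdot)})=1$, which is the key identity.

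With the key identity in hand, I would deduce (i) by comparing $\rho(u)=\Phi(1)$ with $\Phi(|u|_{p(\cdot)})=1$ and using strict monotonicity of $\Phi$. For (ii) in the case $|u|_{p(\cdot)}>1$, I would write $\rho(u)=\int_{\Omega}|u|_{p(\cdot)}^{p(x)}\left(|u|/|u|_{p(\cdot)}\right)^{p(x)}dx$ and apply the pointwise bounds $|u|_{p(\cdot)}^{p^{-}}\le|u|_{p(\cdot)}^{p(x)}\le|u|_{p(\cdot)}^{p^{+}}$; the key identity then collapses the factor indexed by $x$ and yields the two-sided estimate. The case $|u|_{p(\cdot)}<1$ is symmetric with reversed inequalities. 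For the existence of $\xi$, the map $x\mapsto|u|_{p(\cdot)}^{p(x)}$ is continuous on the connected compact set $\overline{\Omega}$, so by the intermediate value theorem its range is the closed interval joining $|u|_{p(\cdot)}^{p^{-}}$ and $|u|_{p(\cdot)}^{p^{+}}$, and (ii) places $\rho(u)$ inside that interval. Finally, (iii) is immediate from the two-sided bounds of (ii) together with (i) to handle the regime used: for $\rho(u)\to 0$ one eventually has $|u|_{p(\cdot)}<1$ and then $|u|_{p(\cdot)}^{p^{+}}\le\rho(u)$, and symmetrically for $\rho(u)\to\infty$.

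The main obstacle is the rigorous proof of the key identity $\Phi(|u|_{p(\cdot)})=1$. Strict monotonicity is elementary, but continuity on $(0,\infty)$ requires producing a local dominating function for Lebesgue's theorem; on any closed subinterval $[\mu_{0},\mu_{1}]\subset(0,\infty)$ one can bound $|u(x)/\mu|^{p(x)}$ uniformly in $\mu$ by $(1+\mu_{0}^{-p^{+}})(1+|u(x)|^{p(x)})$, which is integrable by the assumption $u\in L^{p(\cdot)}(\Omega)$. Once continuity is in place, the limits at $0^{+}$ and $+\infty$ combined with the defining infimum force $\Phi(|u|_{p(\cdot)})=1$ exactly, not merely $\le 1$. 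After this pivotal step, the rest of the argument is mechanical.
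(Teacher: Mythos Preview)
Your argument is correct and is the standard route to these modular--norm relations. Note, however, that the paper does not actually supply a proof of this proposition: it is quoted verbatim from the cited reference \cite{e12} (Fan--Zhang), so there is no in-paper argument to compare against. Your derivation via the auxiliary map $\Phi(\mu)=\int_{\Omega}|u/\mu|^{p(x)}\,dx$, its strict monotonicity and continuity, and the resulting unit-ball identity $\Phi(|u|_{p(\cdot)})=1$ is precisely the approach one finds in the standard references on variable exponent spaces, so your proof is in line with what the citation points to. The only place to be slightly careful is the existence of $\xi$: your use of the intermediate value theorem tacitly assumes $\overline{\Omega}$ is connected, which is fine here since $\Omega$ is a domain.
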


\begin{proposition}
(see \cite{e12}). If $u$, $u_{n}\in L^{p(\cdot )}(\Omega )$, $n=1,2,\cdots ,$
then the following statements are equivalent to each other.

1) $\underset{k\rightarrow \infty }{\lim }$ $\left\vert u_{k}-u\right\vert
_{p(\cdot )}=0;$

2) $\underset{k\rightarrow \infty }{\lim }$ $\rho \left( u_{k}-u\right) =0;$

3) $u_{k}$ $\rightarrow $ $u$ in measure in $\Omega $ and $\underset{
k\rightarrow \infty }{\lim }$ $\rho \left( u_{k}\right) =\rho (u).$
\end{proposition}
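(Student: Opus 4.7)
The plan is to prove the three-way equivalence by leveraging Proposition 2.3 for the easy implication (1)$\Leftrightarrow$(2), and then handling (2)$\Leftrightarrow$(3) via a Chebyshev estimate together with a Brezis--Lieb-type argument adapted to the variable exponent modular.

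First, I would observe that (1)$\Leftrightarrow$(2) is essentially immediate from Proposition 2.3 (iii): applying the equivalence $|w|_{p(\cdot)} \to 0 \Leftrightarrow \rho(w) \to 0$ to the sequence $w_k = u_k - u$ in $L^{p(\cdot)}(\Omega)$ yields both directions. Next, for (2)$\Rightarrow$(3), the convergence in measure is a direct Chebyshev-type estimate:
\begin{equation*}
|\{x\in\Omega:|u_k(x)-u(x)|>\varepsilon\}|\cdot\min(\varepsilon^{p^-},\varepsilon^{p^+}) \leq \int_{\{|u_k-u|>\varepsilon\}}|u_k-u|^{p(x)}\,dx \leq \rho(u_k-u)\to 0.
\end{equation*}
For $\rho(u_k)\to\rho(u)$, I would start from the pointwise inequality $\bigl||a|^{t}-|b|^{t}\bigr|\leq t(|a|+|b|)^{t-1}|a-b|$ with $t=p(x)$, then apply the variable exponent H\"older inequality from Proposition~2.1 together with the norm--modular relations of Proposition~2.3 to bound $|\rho(u_k)-\rho(u)|$ by a quantity that vanishes with $\rho(u_k-u)$.

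For (3)$\Rightarrow$(2), which is the substantive direction, the plan is: first extract a subsequence converging a.e.\ using the convergence in measure; then establish the variable-exponent Brezis--Lieb splitting
\begin{equation*}
\rho(u_k) = \rho(u_k-u) + \rho(u) + o(1),
\end{equation*}
by applying the elementary pointwise inequality $\bigl||a+b|^{p(x)}-|a|^{p(x)}-|b|^{p(x)}\bigr|\leq \varepsilon|a|^{p(x)}+C_\varepsilon|b|^{p(x)}$ with $a=u_k-u$, $b=u$, followed by Fatou's lemma and dominated convergence. Combined with the hypothesis $\rho(u_k)\to\rho(u)$, this forces $\rho(u_k-u)\to 0$ along the subsequence, and a standard subsequence-of-subsequence argument upgrades this to the full sequence.

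The main obstacle will be executing the Brezis--Lieb step in the variable exponent setting: absent the homogeneity that simplifies the constant-exponent case, I have to carry out the splitting pointwise in $x$ and then justify the passage to the limit under the integral. The key is that the assumption $\rho(u_k)\to\rho(u)$ provides exactly the uniform integrability needed to rule out mass concentrating or escaping; without it the argument collapses, which is of course consistent with the fact that convergence in measure alone does not imply modular (or norm) convergence.
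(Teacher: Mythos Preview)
The paper does not supply its own proof of this proposition; it is stated with the citation ``(see \cite{e12})'' and no argument is given. So there is nothing in the paper to compare your proposal against line by line.

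That said, your plan is correct and is essentially the standard proof one finds in the variable exponent literature (including the cited reference of Fan--Zhang and the monograph of Diening et al.). The equivalence (1)$\Leftrightarrow$(2) is indeed immediate from Proposition~2.3(iii); the Chebyshev estimate you wrote for (2)$\Rightarrow$ convergence in measure is exactly right; and the Brezis--Lieb splitting is the accepted route for (3)$\Rightarrow$(2). Two small points worth making explicit when you write it out: first, in the pointwise inequality $\bigl||a+b|^{p(x)}-|a|^{p(x)}-|b|^{p(x)}\bigr|\leq \varepsilon|a|^{p(x)}+C_\varepsilon|b|^{p(x)}$ you must note that $C_\varepsilon$ can be chosen uniformly in $x$ because $p(x)$ ranges over the compact interval $[p^-,p^+]\subset(1,\infty)$; second, the Brezis--Lieb step needs $\sup_k\rho(u_k)<\infty$, which you have from the hypothesis $\rho(u_k)\to\rho(u)$. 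With those two remarks in place your argument goes through without difficulty.
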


The spaces $W^{1,p(\cdot )}(\Omega )$ and $W^{1,q(\cdot )}(\Omega )$ are
defined by
\begin{eqnarray*}
W^{1,p(\cdot )}(\Omega ) &=&\left\{ u\in L^{p(\cdot )}\left( \Omega \right)
\left\vert \nabla u\in (L^{p(\cdot )}\left( \Omega \right) )^{N}\right.
\right\} , \\
W^{1,q(\cdot )}(\Omega ) &=&\left\{ v\in L^{q(\cdot )}\left( \Omega \right)
\left\vert \nabla v\in (L^{q(\cdot )}\left( \Omega \right) )^{N}\right.
\right\} ,
\end{eqnarray*}
and be endowed with the following norm
\begin{eqnarray*}
\left\Vert u\right\Vert _{p(\cdot )} &=&\inf \left\{ \mu >0\left\vert
\int_{\Omega }\left\vert \frac{\nabla u}{\mu }\right\vert
^{p(x)}dx+\int_{\Omega }\left\vert \frac{u(x)}{\mu }\right\vert
^{p(x)}dx\leq 1\right. \right\} , \\
\left\Vert u\right\Vert _{q(\cdot )} &=&\inf \left\{ \mu >0\left\vert
\int_{\Omega }\left\vert \frac{\nabla u}{\mu }\right\vert
^{q(x)}dx+\int_{\Omega }\left\vert \frac{u(x)}{\mu }\right\vert
^{q(x)}dx\leq 1\right. \right\} .
\end{eqnarray*}

We denote by $W_{0}^{1,p(\cdot )}(\Omega )$ the closure of $C_{0}^{\infty
}\left( \Omega \right) $ in $W^{1,p(\cdot )}(\Omega )$. Then we have in
particular the following Sobolev embedding relation and Poincar$\acute{e}$
type inequality.

\begin{proposition}
\label{2.5} (see \cite{j5}, \cite{e12}). i) $W^{1,p(\cdot )}(\Omega )$ and $
W_{0}^{1,p(\cdot )}(\Omega )$ are separable reflexive Banach spaces;

ii) If $\eta \in C_{+}\left( \overline{\Omega }\right) $ and $\eta
(x)<p^{\ast }(x)$ for any $x\in \overline{\Omega },$ then the imbedding from
$W^{1,p(\cdot )}(\Omega )$ to $L^{\eta (\cdot )}\left( \Omega \right) $ is
compact and continuous;

iii) There is a constant $C>0,$ such that
\begin{equation*}
\left\vert u\right\vert _{p(\cdot )}\leq C\left\vert \nabla u\right\vert
_{p(\cdot )},\forall u\in W_{0}^{1,p(\cdot )}(\Omega ).
\end{equation*}
\end{proposition}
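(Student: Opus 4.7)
The three parts are proved separately; I would handle (i), then (iii), and finally (ii), which I expect to be the principal difficulty.

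For (i), the strategy is to realize $W^{1,p(\cdot)}(\Omega)$ as an isometric closed subspace of a finite product of variable-exponent Lebesgue spaces. Consider the linear map
\[
T:W^{1,p(\cdot)}(\Omega)\longrightarrow L^{p(\cdot)}(\Omega)\times \bigl(L^{p(\cdot)}(\Omega)\bigr)^{N},\qquad Tu=(u,\partial_1 u,\ldots,\partial_N u),
\]
with the target endowed with the Luxemburg-type direct sum norm for which $T$ is a linear isometry onto its image. The image is closed: if $Tu_n\to (u,w_1,\ldots,w_N)$ in the product, then by Proposition 2.4 each component converges in $L^{p(\cdot)}(\Omega)$, hence in $L^1(\Omega)$ by the continuous embedding on the bounded domain, so taking distributional limits in $\int u_n\,\partial_i\phi = -\int \partial_i u_n\,\phi$ identifies $w_i=\partial_i u$ and the limit lies in $T(W^{1,p(\cdot)}(\Omega))$. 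Proposition 2.1(i) tells us $L^{p(\cdot)}(\Omega)$ is separable and uniformly convex, hence reflexive by Milman--Pettis; separability and reflexivity pass to finite products and to closed subspaces, so $W^{1,p(\cdot)}(\Omega)$ inherits them. Since $W_0^{1,p(\cdot)}(\Omega)$ is by definition a closed subspace of $W^{1,p(\cdot)}(\Omega)$, the same conclusions hold for it.

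For (iii), the plan is to prove the Poincar\'e inequality on $C_c^\infty(\Omega)$ via a Riesz potential representation and then extend by density. Given $u\in C_c^\infty(\Omega)$, the fundamental solution of the Laplacian yields the pointwise estimate
\[
|u(x)|\le C_N \int_\Omega \frac{|\nabla u(y)|}{|x-y|^{N-1}}\,dy = C_N\,(I_1|\nabla u|)(x).
\]
Because $\Omega$ is bounded, $I_1$ can be realized as convolution with an $L^1$ kernel. A Young-type convolution inequality for variable exponents, valid here thanks to the log-H\"older continuity of $p$ on $\overline{\Omega}$ supplied by the standing hypothesis $p\in C^1(\overline{\Omega})$, gives continuity of $I_1:L^{p(\cdot)}(\Omega)\to L^{p(\cdot)}(\Omega)$, so
\[
|u|_{p(\cdot)}\le C_N\,|I_1(|\nabla u|)|_{p(\cdot)}\le C\,|\nabla u|_{p(\cdot)}.
\]
This is (iii) on $C_c^\infty(\Omega)$; density extends it to $W_0^{1,p(\cdot)}(\Omega)$.

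For (ii), the main obstacle, my plan has three stages. \emph{Stage A} establishes the continuous variable-exponent Sobolev embedding $W^{1,p(\cdot)}(\Omega)\hookrightarrow L^{p^*(\cdot)}(\Omega)$: cover $\overline{\Omega}$ by finitely many open balls $B_j$ on which the oscillation of $p$ is as small as needed, take a subordinate partition of unity $\{\phi_j\}$, on each $B_j$ freeze the exponent at a suitable constant $p_j$, apply the classical Sobolev inequality to $\phi_j u\in W^{1,p_j}(B_j)$, and translate back between modular and norm via Proposition 2.3 to sum up the local contributions. \emph{Stage B} handles compactness into a Lebesgue space with lower integrability: a bounded sequence in $W^{1,p(\cdot)}(\Omega)$ is, via Proposition 2.1(ii) applied with the constant exponent $p^-$, automatically bounded in $W^{1,p^-}(\Omega)$, so the classical Rellich--Kondrachov theorem extracts a subsequence converging in $L^{p^-}(\Omega)$ and a.e. \emph{Stage C} upgrades to convergence in $L^{\eta(\cdot)}(\Omega)$: the hypothesis $\eta(x)<p^*(x)$ on the compact $\overline{\Omega}$ produces $\varepsilon>0$ with $(1+\varepsilon)\eta(x)\le p^*(x)$, and the $L^{p^*(\cdot)}$-boundedness from Stage~A renders $|u_n-u|^{\eta(x)}$ uniformly integrable; Vitali's theorem then turns the a.e.\ convergence into $\int_\Omega |u_n-u|^{\eta(x)}dx\to 0$, which by Proposition 2.4 is exactly $L^{\eta(\cdot)}$-convergence. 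The recurring challenge throughout (ii) is the nonhomogeneity of the Luxemburg norm: one must constantly shuttle between norm and modular, and localize in order to invoke the classical, constant-exponent Sobolev and Rellich--Kondrachov theorems.
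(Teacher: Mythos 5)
The paper does not prove this proposition at all: it is imported wholesale from the standard references \cite{j5}, \cite{e12}, so the only comparison available is with the classical arguments in those sources, and your reconstruction is essentially that classical route. Part (i) is correct as written: the isometric identification of $W^{1,p(\cdot)}(\Omega)$ with a closed subspace of $L^{p(\cdot)}(\Omega)\times(L^{p(\cdot)}(\Omega))^{N}$, closedness via distributional limits, and reflexivity from uniform convexity (Milman--Pettis) is exactly how Fan--Zhao and Kov\'a\v{c}ik--R\'akosn\'{\i}k argue. Part (ii) has the right architecture (localize, freeze the exponent, use classical Sobolev and Rellich--Kondrachov, then Vitali), but Stage A overstates what naive freezing gives: replacing $p$ by $p_j^-=\min_{B_j}p$ on a small ball yields the target $(p_j^-)^{*}$, which is strictly below $p^{*}(x)$ at points where $p(x)>p_j^-$, so you do not get the critical embedding into $L^{p^{*}(\cdot)}(\Omega)$ this way (that sharper statement needs log-H\"older continuity and Riesz-potential machinery). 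This is harmless for your proof, because Stage C only requires a uniform bound in $L^{(1+\varepsilon)\eta(\cdot)}(\Omega)$ for some $\varepsilon>0$, and since $\eta<p^{*}$ with a uniform gap on the compact $\overline{\Omega}$, the freezing argument does deliver that subcritical bound; you should simply restate Stage A in this weaker form. Note also that for $W^{1,p(\cdot)}(\Omega)$ (as opposed to $W_0^{1,p(\cdot)}(\Omega)$) both your partition-of-unity step and the classical Rellich step require some boundary regularity (cone or Lipschitz condition), an assumption implicit in the cited references though suppressed in the paper's statement.

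The one genuinely delicate point is in (iii): there is no Young convolution inequality in variable exponent Lebesgue spaces, because these spaces are not translation invariant, so the step ``$I_1$ is bounded on $L^{p(\cdot)}(\Omega)$ by a Young-type inequality'' cannot be justified as stated. The correct tool is the pointwise domination $|k\ast f|\leq \|k\|_{L^1}\,Mf$ valid for radially decreasing integrable kernels (which your truncated kernel $|z|^{1-N}\chi_{\{|z|\leq \mathrm{diam}\,\Omega\}}$ is), combined with Diening's theorem that the Hardy--Littlewood maximal operator is bounded on $L^{p(\cdot)}$ under log-H\"older continuity of $p$ with $p^{-}>1$; both hypotheses hold here since $p\in C^{1}(\overline{\Omega})$ and $p>1$ on the compact $\overline{\Omega}$. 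With that substitution your Riesz-potential proof of the Poincar\'e inequality is sound. Alternatively, once (ii) is established you can obtain (iii) more cheaply by contradiction: a sequence with $|u_n|_{p(\cdot)}=1$ and $|\nabla u_n|_{p(\cdot)}\to 0$ is bounded in $W_0^{1,p(\cdot)}(\Omega)$, hence has a subsequence converging in $L^{p(\cdot)}(\Omega)$ to a function with vanishing gradient, which must be a constant in $W_0^{1,p^{-}}(\Omega)$ and therefore zero, contradicting $|u|_{p(\cdot)}=1$; this route avoids maximal-function theory altogether.
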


We know from iii) of Proposition \ref{2.5} that $\left\vert \nabla
u\right\vert _{p(\cdot )}$ and $\left\Vert u\right\Vert _{p(\cdot )}$ are
equivalent norms on $W_{0}^{1,p(\cdot )}(\Omega )$. From now on we will use $
\left\vert \nabla u\right\vert _{p(\cdot )}$ to replace $\left\Vert
u\right\Vert _{p(\cdot )}$ as the norm on $W_{0}^{1,p(\cdot )}(\Omega )$,
and use $\left\vert \nabla v\right\vert _{q(\cdot )}$ to replace $\left\Vert
v\right\Vert _{q(\cdot )}$ as the norm on $W_{0}^{1,q(\cdot )}(\Omega )$.

Under the assumption (H$_{p,q}$), $\lambda _{p(\cdot )}$ defined in (\ref{d2}
) is positive, i.e., we have the following proposition.

\begin{proposition}
(see \cite{e13}). If the assumption (H$_{p,q}$) is satisfied, then $\lambda
_{p(\cdot )}$ defined in (\ref{d2}) is positive.
\end{proposition}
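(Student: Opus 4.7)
The plan is to establish the modular Poincar\'e-type inequality
\[
\int_\Omega \frac{|u|^{p(x)}}{p(x)}\,dx \le K \int_\Omega \frac{|\nabla u|^{p(x)}}{p(x)}\,dx,\qquad \forall\, u \in W_0^{1,p(\cdot)}(\Omega),
\]
with a constant $K = K(\Omega, p^\pm, \|\nabla p\|_\infty)$, from which $\lambda_{p(\cdot)} \ge 1/K > 0$ follows at once from \eqref{d2}. After a linear change of coordinates I may assume $l_p = e_1$, and by density it suffices to prove the inequality for $u \in C_0^\infty(\Omega)$. Writing $x = (t, x')$ with $x' \in \mathbb{R}^{N-1}$, Fubini reduces the task to a uniform one-dimensional estimate on each connected component $(a,b) = (a(x'), b(x'))$ of the slice $\{t : (t, x') \in \Omega\}$. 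Under $(H_{p,q})$, the map $t \mapsto p(t, x')$ is monotone on each such component; the non-increasing case being symmetric, I treat the non-decreasing one.

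On a fixed 1D slice, suppressing the $x'$-dependence, I integrate by parts with weight $(t-a)$: since $u(a) = u(b) = 0$ one gets
\[
\int_a^b \frac{|u|^{p}}{p}\,dt = -\int_a^b (t-a)\left\{ |u|^{p-2} u\, u_t + \frac{p_t}{p^{2}}\,|u|^{p}\bigl(p\ln|u| - 1\bigr) \right\}dt.
\]
Since $p_t \ge 0$, the second integrand has the sign of $p\ln|u| - 1$: on the large-$|u|$ region $\{|u| \ge e^{1/p}\}$ this contribution to the right-hand side is nonpositive and may be discarded, while on the complementary region both $|u|^p$ and $|\ln|u||$ are uniformly bounded by constants depending only on $p^\pm$, giving a contribution controlled by $\|p_t\|_\infty \mathrm{diam}(\Omega)\int |u|^p$ with a small, absorbable coefficient. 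The cross-term is handled by Young's inequality
\[
|u|^{p-1}|u_t| \le \eta\,|u|^{p} + C(\eta, p^\pm)\,|u_t|^{p},
\]
with $\eta$ chosen small enough that the resulting $|u|^{p}$-coefficient, multiplied by $\mathrm{diam}(\Omega)$, can be absorbed into the left-hand side. Integrating the resulting uniform 1D bound over $x'$ and applying Fubini yields the claimed modular Poincar\'e inequality.

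The main obstacle will be controlling the $|u|^p \ln|u|$ factor appearing in the second integrand, whose sign flips between $\{|u|<1\}$ and $\{|u|>1\}$. It is precisely the sign-definiteness of $p_t$ guaranteed by $(H_{p,q})$ that lets us split the slice into a favorable portion (where the log term has the ``right'' sign and the entire contribution is dropped) and a harmless portion (where $|u|$ is uniformly bounded and the term is absorbed into the Young piece). Without $(H_{p,q})$ such a split is unavailable and, as noted in the discussion following \eqref{d2}, $\lambda_{p(\cdot)}$ may indeed degenerate to zero. The remaining bookkeeping with $p^\pm$, $\|\nabla p\|_\infty$ and $\mathrm{diam}(\Omega)$ is routine.
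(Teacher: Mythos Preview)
The paper does not prove this proposition; it simply cites \cite{e13} (Fan--Zhang--Zhao) and states the result. So there is no ``paper's own proof'' to compare against, and your task is really to supply a valid argument. Unfortunately your sketch contains a genuine gap.

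The problem is in the handling of the logarithmic term on the small-$|u|$ region. You assert that on $\{|u|<e^{1/p}\}$ ``both $|u|^p$ and $|\ln|u||$ are uniformly bounded''; this is false, since $|\ln|u||\to\infty$ as $|u|\to 0$. What \emph{is} bounded there is the product $|u|^{p}(1-p\ln|u|)$: indeed the function $s\mapsto s^{p}(1-p\ln s)$ on $(0,e^{1/p})$ has derivative $-p^{2}s^{p-1}\ln s$, hence is maximised at $s=1$ with value $1$. But plugging this in gives only
\[
\int_{\{|u|<e^{1/p}\}} (t-a)\,\frac{p_t}{p^{2}}\,|u|^{p}\bigl|p\ln|u|-1\bigr|\,dt \;\le\; \frac{\|p_t\|_\infty}{(p^{-})^{2}}\,(b-a)^{2},
\]
an \emph{additive} constant, not a term of the form $c\int|u|^{p}$ with small $c$. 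There is no way to absorb a fixed additive constant into the left-hand side of a homogeneous modular inequality: if $\int|u|^{p}/p$ is tiny, the bound $\int|u|^{p}/p\le K\int|\nabla u|^{p}/p$ simply does not follow from what you have. Nor can you normalise your way out: because the exponent $p(x)$ varies, the Rayleigh quotient in \eqref{d2} is \emph{not} invariant under $u\mapsto\lambda u$, so an inequality of the shape $\int|u|^{p}/p\le C_{1}+C_{2}\int|\nabla u|^{p}/p$ does not by itself force $\lambda_{p(\cdot)}>0$.

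A secondary point: your claim that the non-increasing case is ``symmetric'' is only correct if you simultaneously switch the weight from $(t-a)$ to $(b-t)$; with weight $(t-a)$ and $p_t\le 0$ the bad region becomes $\{|u|\ge e^{1/p}\}$, where $|u|^{p}\ln|u|$ is unbounded, which is far worse. So the bookkeeping you describe as ``routine'' would also need care. You should consult \cite{e13} for the actual argument; the integration-by-parts route with weight $(t-a)$ does correctly exploit the sign of $p_t$, but as written the small-$|u|$ contribution is not under control.
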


\bigskip Denote $X=W_{0}^{1,p(\cdot )}(\Omega )\times W_{0}^{1,q(\cdot
)}(\Omega )$. The norm $\left\Vert \cdot \right\Vert $ on $X$ is defined by
\begin{equation*}
\left\Vert (u,v)\right\Vert =\max \{\left\Vert u\right\Vert _{p(\cdot
)},\left\Vert v\right\Vert _{q(\cdot )}\}.
\end{equation*}

For any $(u,v)$ and $(\phi ,\psi )$ in $X$, let
\begin{eqnarray*}
\Phi _{1}(u) &=&\int_{\Omega }\frac{1}{p(x)}\left\vert \nabla u\right\vert
^{p(x)}dx,\text{ } \\
\Phi _{2}(v) &=&\int_{\Omega }\frac{1}{q(x)}\left\vert \nabla v\right\vert
^{q(x)}dx, \\
\Phi (u,v) &=&\Phi _{1}(u)+\Phi _{2}(v), \\
\Psi (u,v) &=&\int_{\Omega }\lambda \left\vert u\right\vert ^{\alpha
(x)}\left\vert v\right\vert ^{\beta (x)}+F(x,u,v)dx.
\end{eqnarray*}

From Proposition \ref{2.2}, Proposition \ref{2.5} and condition $(H_{0})$,
it is easy to see that $\Phi _{1},\Phi _{2},\Phi ,\Psi \in C^{1}(X,\mathbb{R}
)$ and then
\begin{eqnarray*}
\Phi ^{\prime }(u,v)(\phi ,\psi ) &=&D_{1}\Phi (u,v)(\phi )+D_{2}\Phi
(u,v)(\psi ), \\
\Psi ^{\prime }(u,v)(\phi ,\psi ) &=&D_{1}\Psi (u,v)(\phi )+D_{2}\Psi
(u,v)(\psi ),
\end{eqnarray*}
where
\begin{eqnarray*}
D_{1}\Phi (u,v)(\phi ) &=&\int_{\Omega }\left\vert \nabla u\right\vert
^{p(x)-2}\nabla u\nabla \phi dx=\Phi _{1}^{\prime }(u)(\phi ), \\
D_{2}\Phi (u,v)(\psi ) &=&\int_{\Omega }\left\vert \nabla v\right\vert
^{q(x)-2}\nabla v\nabla \psi dx=\Phi _{2}^{\prime }(v)(\psi ), \\
D_{1}\Psi (u,v)(\phi ) &=&\int_{\Omega }\lambda \alpha (x)\left\vert
u\right\vert ^{\alpha (x)-2}u\left\vert v\right\vert ^{\beta (x)}+\frac{
\partial }{\partial u}F(x,u,v)\phi dx,\text{ } \\
D_{2}\Psi (u,v)(\psi ) &=&\int_{\Omega }\lambda \beta (x)\left\vert
u\right\vert ^{\alpha (x)}\left\vert v\right\vert ^{\beta (x)-2}v+\frac{
\partial }{\partial v}F(x,u,v)\psi dx.
\end{eqnarray*}
The integral functional associated with the problem $(P)$ is
\begin{equation*}
\varphi (u,v)=\Phi (u,v)-\Psi (u,v).
\end{equation*}

Without loss of generality, we may assume that $F(x,0,0)=0,\forall x\in
\overline{\Omega }$. Obviously, we have
\begin{equation*}
F(x,u,v)=\int_{0}^{1}[u\partial _{2}F(x,tu,tv)+v\partial
_{3}F(x,tu,tv)]dt,\forall x\in \overline{\Omega },
\end{equation*}
where $\partial _{j}$ denotes the partial derivative of $F$ with respect to
its $j$-th variable, then the condition $(H_{0})$ holds
\begin{equation}\label{a.2}
|F(x,u,v)|\leq c(|u|^{\gamma (x)}+|v|^{\delta (x)}+1),\forall x\in \overline{
\Omega }.
\end{equation}

From Proposition \ref{2.2}, Proposition \ref{2.5} and condition $(H_{0})$,
it is easy to see that $\varphi \in C^{1}(X,\mathbb{R})$ and satisfies
\begin{equation*}
\varphi ^{\prime }(u,v)(\phi ,\psi )=D_{1}\varphi (u,v)(\phi )+D_{2}\varphi
(u,v)(\psi ),
\end{equation*}
where
\begin{eqnarray*}
D_{1}\varphi (u,v)(\phi ) &=&D_{1}\Phi (u,v)(\phi )-D_{1}\Psi (u,v)(\phi ),
\\
D_{2}\varphi (u,v)(\psi ) &=&D_{2}\Phi (u,v)(\psi )-D_{2}\Psi (u,v)(\psi ).
\end{eqnarray*}

We say $(u,v)\in X$ is a critical point of $\varphi $ if
\begin{equation*}
\varphi ^{\prime }(u,v)(\phi ,\psi )=0,\forall (\phi ,\psi )\in X\text{.}
\end{equation*}

The dual space of $X$ will be denoted as $X^{\ast }$, then for any $H\in
X^{\ast }$, there exists $f\in (W_{0}^{1,p(\cdot )}(\Omega ))^{\ast }$, $
g\in (W_{0}^{1,q(\cdot )}(\Omega ))^{\ast }$ such that $H(u,v)=f(u)+g(v)$.
We denote $\left\Vert \cdot \right\Vert _{\ast },$ $\left\Vert \cdot
\right\Vert _{\ast ,p(\cdot )}$ and $\left\Vert \cdot \right\Vert _{\ast
,q(\cdot )}$ \ the norms of $X^{\ast },(W_{0}^{1,p(\cdot )}(\Omega ))^{\ast
}\ $and $(W_{0}^{1,q(\cdot )}(\Omega ))^{\ast }$, respectively. Holds $
X^{\ast }=(W_{0}^{1,p(\cdot )}(\Omega ))^{\ast }\times (W_{0}^{1,q(\cdot
)}(\Omega ))^{\ast }$, and
\begin{equation*}
\left\Vert H\right\Vert _{\ast }=\left\Vert f\right\Vert _{\ast ,p(\cdot
)}+\left\Vert g\right\Vert _{\ast ,q(\cdot )}.
\end{equation*}
Therefore
\begin{equation*}
\left\Vert \varphi ^{\prime }(u,v)\right\Vert _{\ast }=\left\Vert
D_{1}\varphi (u,v)\right\Vert _{\ast ,p(\cdot )}+\left\Vert D_{2}\varphi
(u,v)\right\Vert _{\ast ,q(\cdot )}.
\end{equation*}

It's easy to see that $\Phi $ is a convex functional, and we have the
following proposition.

\begin{proposition}
(see \cite{e12}, \cite{e17}). i) $\Phi ^{\prime }:X\rightarrow X^{\ast }$ is
a continuous, bounded and strictly monotone operator;

ii) $\Phi ^{\prime }$ is a mapping of type $(S_{+})$, i.e., if $
(u_{n},v_{n})\rightharpoonup (u,v)$ in $X$ and $\underset{n\rightarrow
+\infty }{\overline{\lim }}$ $(\Phi ^{\prime }(u_{n},v_{n})-\Phi ^{\prime
}(u,v),(u_{n}-u,v_{n}-v))\leq 0$, then $(u_{n},v_{n})\rightarrow (u,v)$ in $
X $;

iii) $\Phi ^{\prime }:X\rightarrow X^{\ast }$ is a homeomorphism.
\end{proposition}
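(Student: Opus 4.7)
The plan is to reduce everything to the single-equation case via the product structure of the functional, since $\Phi$ splits additively as $\Phi(u,v)=\Phi_1(u)+\Phi_2(v)$ and hence
$$\bigl(\Phi'(u,v),(\phi,\psi)\bigr)=\bigl(\Phi_1'(u),\phi\bigr)+\bigl(\Phi_2'(v),\psi\bigr).$$
The analogous statements for the $p(x)$-Laplacian operators $\Phi_1'$ on $W_0^{1,p(\cdot)}(\Omega)$ and $\Phi_2'$ on $W_0^{1,q(\cdot)}(\Omega)$ are exactly what is recorded in the cited references \cite{e12}, \cite{e17}, so in essence the task is to combine these in a product setting with the norm $\|(u,v)\|=\max\{\|u\|_{p(\cdot)},\|v\|_{q(\cdot)}\}$, which is equivalent to the sum norm on $W_0^{1,p(\cdot)}(\Omega)\times W_0^{1,q(\cdot)}(\Omega)$.

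For part (i), continuity and boundedness transfer directly from $\Phi_1',\Phi_2'$ via the splitting. For strict monotonicity, given $(u_1,v_1)\neq(u_2,v_2)$ at least one of $u_1\neq u_2$ or $v_1\neq v_2$ holds; the corresponding summand in the pairing $(\Phi'(u_1,v_1)-\Phi'(u_2,v_2),(u_1-u_2,v_1-v_2))$ is strictly positive by the strict monotonicity of $\Phi_j'$, and the other is non-negative by monotonicity, so the total is positive.

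For part (ii), weak convergence $(u_n,v_n)\rightharpoonup(u,v)$ in $X$ gives the component weak convergences, and the hypothesis splits as
$$\limsup_{n\to\infty}\bigl[(\Phi_1'(u_n)-\Phi_1'(u),u_n-u)+(\Phi_2'(v_n)-\Phi_2'(v),v_n-v)\bigr]\leq 0.$$
Since each summand is non-negative by monotonicity, both individually have non-positive $\limsup$, so the type $(S_+)$ property of $\Phi_1'$ and $\Phi_2'$ yields strong convergence in each factor, hence $(u_n,v_n)\to(u,v)$ in $X$.

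For part (iii), $\Phi'$ is continuous and injective by (i). For surjectivity I will apply the Minty--Browder theorem to the monotone, demicontinuous, coercive operator $\Phi'$; coercivity follows from Proposition 2.4 applied separately to each factor, giving the bound
$$\frac{(\Phi'(u,v),(u,v))}{\|(u,v)\|}=\frac{\int_\Omega|\nabla u|^{p(x)}dx+\int_\Omega|\nabla v|^{q(x)}dx}{\max\{\|u\|_{p(\cdot)},\|v\|_{q(\cdot)}\}}\longrightarrow\infty$$
as $\|(u,v)\|\to\infty$. Continuity of $(\Phi')^{-1}$ is the standard consequence of type $(S_+)$: if $H_n\to H$ in $X^*$ and $(u_n,v_n):=(\Phi')^{-1}(H_n)$, coercivity gives boundedness, hence a weakly convergent subsequence with limit $(u,v)$; the identity
$$(\Phi'(u_n,v_n)-\Phi'(u,v),(u_n-u,v_n-v))=(H_n-\Phi'(u,v),(u_n-u,v_n-v))\to 0$$
together with (ii) gives strong convergence, and the limit is forced to equal $(\Phi')^{-1}(H)$ by uniqueness; the usual subsequence-of-subsequence argument upgrades this to convergence of the whole sequence.

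The only non-routine point is the coercivity estimate, because in the variable-exponent setting the Luxemburg norm and the modular are not related by a single power law; one must split into the regimes $\|u\|_{p(\cdot)}\geq 1$ and $<1$ (and likewise for $v$) before invoking the inequalities in Proposition 2.4, but the bookkeeping is standard.
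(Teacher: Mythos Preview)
Your proof is correct. The route is genuinely different from the paper's: you exploit the product structure $\Phi(u,v)=\Phi_1(u)+\Phi_2(v)$ to reduce everything to the single-equation results for $\Phi_1'$ and $\Phi_2'$ already established in \cite{e12}, \cite{e17}, whereas the paper redoes the entire $(S_+)$ argument from scratch in the system setting. Concretely, for part~(ii) the paper splits $\Omega$ into the regions $\{p(x)\geq 2\}$ and $\{1<p(x)<2\}$, uses the elementary inequalities
\[
(|\xi|^{p-2}\xi-|\eta|^{p-2}\eta)\cdot(\xi-\eta)\geq c_p|\xi-\eta|^p\quad(p\geq 2),\qquad
(|\xi|^{p-2}\xi-|\eta|^{p-2}\eta)\cdot(\xi-\eta)\geq (p-1)\frac{|\xi-\eta|^2}{(|\xi|+|\eta|)^{2-p}}\quad(1<p<2)
\]
to deduce convergence in measure of $\nabla u_n$ to $\nabla u$, then Fatou plus an absolute-equicontinuity argument to upgrade to modular (hence norm) convergence. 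Your observation that each summand $(\Phi_j'(w_n)-\Phi_j'(w),w_n-w)$ is nonnegative, so the hypothesis $\limsup(\text{sum})\leq 0$ forces $\limsup$ of each term $\leq 0$ separately, sidesteps all of this and is the cleaner argument. The paper's version buys self-containment; yours buys brevity and makes the essentially componentwise nature of the statement transparent. Part~(iii) is handled the same way in both proofs (Minty--Browder for surjectivity, type $(S_+)$ for continuity of the inverse), and your inclusion of the subsequence-of-subsequence step is a nice touch the paper leaves implicit.
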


\begin{remark}
A proof of a simple version of the above proposition can be found in the
references \cite{e12}, \cite{e17}. In the system setting here, the idea of
proof is essentially the same. For readers' convenience and for
completeness, we present it here.
\end{remark}

\begin{proof}
i) It follows from Proposition 2.2 that $\Phi ^{\prime }$ is continuous and
bounded. For any $\xi ,\eta \in\mathbb{R}^{N},$ we have the following inequalities (see \cite{e12}) from which we can
get the strict monotonicity of $\Phi ^{\prime }$:
\begin{equation}
\left[ (\left\vert \xi \right\vert ^{p-2}\xi -\left\vert \eta \right\vert
^{p-2}\eta )(\xi -\eta )\right] \cdot (\left\vert \xi \right\vert
+\left\vert \eta \right\vert )^{2-p}\geq (p-1)\left\vert \xi -\eta
\right\vert ^{2},1<p<2,  \label{3.2}
\end{equation}
\begin{equation}
(\left\vert \xi \right\vert ^{p-2}\xi -\left\vert \eta \right\vert
^{p-2}\eta )(\xi -\eta )\geq (\frac{1}{2})^{p}\left\vert \xi -\eta
\right\vert ^{p},p\geq 2.  \label{3.1}
\end{equation}

ii) From i), if $u_{n}\rightharpoonup u$ and $\underset{n\rightarrow +\infty
}{\overline{\lim }}(\Phi ^{\prime }(u_{n},v_{n})-\Phi ^{\prime
}(u,v),(u_{n}-u,v_{n}-v))\leq 0$, then
\begin{equation}
\underset{n\rightarrow +\infty }{\lim }(\Phi ^{\prime }(u_{n},v_{n})-\Phi
^{\prime }(u,v),(u_{n}-u,v_{n}-v))=0.  \label{a11}
\end{equation}

We claim that $\nabla u_{n}(x)\rightarrow \nabla u(x)$ in measure.

Denote
\begin{align*}
U& =\{x\in \Omega \mid p(x)\geq 2\},V=\{x\in \Omega \mid 1<p(x)<2\}, \\
V_{n}^{-}& =\{x\in V\mid \left\vert \nabla u_{n}\right\vert +\left\vert
\nabla u\right\vert <1\},V_{n}^{+}=\{x\in V\mid \left\vert \nabla
u_{n}\right\vert +\left\vert \nabla u\right\vert \geq 1\},
\end{align*}
\begin{equation*}
\Phi _{n}=(\left\vert \nabla u_{n}\right\vert ^{p(x)-2}\nabla
u_{n}-\left\vert \nabla u\right\vert ^{p(x)-2}\nabla u)(\nabla u_{n}-\nabla
u),\Psi _{n}=(\left\vert \nabla u_{n}\right\vert +\left\vert \nabla
u\right\vert ).
\end{equation*}

From (\ref{3.1}), we have
\begin{equation} \label{3.1b}
\int_{U}\left\vert \nabla u_{n}-\nabla u\right\vert ^{p(x)}dx\leq
2^{p^{+}}\int_{U}\Phi _{n}dx\leq 2^{p^{+}}\int_{\Omega }\Phi
_{n}dx\rightarrow 0. 
\end{equation}

In view of (\ref{3.2}), we have
\begin{equation} \label{3.1c}
\int_{V_{n}^{-}}\left\vert \nabla u_{n}-\nabla u\right\vert ^{2}dx\leq
\int_{V_{n}^{-}}\frac{1}{p(x)-1}\Phi _{n}dx\leq \frac{1}{p^{-}-1}
\int_{\Omega }\Phi _{n}dx\rightarrow 0.
\end{equation}

Without loss of generality, we may assume that $0<\int_{V_{n}^{+}}\Phi
_{n}dx<1$. Then we have
\begin{align*}
\int_{V_{n}^{+}}\left\vert \nabla u_{n}-\nabla u\right\vert ^{p(x)}dx& \leq
C\int_{V_{n}^{+}}\Phi _{n}^{p(x)/2}\Psi _{n}^{(2-p(x))p(x)/2}dx \\
& =C\Big(\int_{V_{n}^{+}}\Phi _{n}dx\Big)^{1/2}\int_{V_{n}^{+}}\Big(
\int_{V_{n}^{+}}\Phi _{n}dx\Big)^{-1/2}\Phi _{n}^{p(x)/2}\Psi
_{n}^{(2-p(x))p(x)/2}dx \\
& \leq C\Big(\int_{V_{n}^{+}}\Phi _{n}dx\Big)^{1/2}\int_{V_{n}^{+}}\Big[\big(
\int_{V_{n}^{+}}\Phi _{n}dx\big)^{-1/p(x)}\Phi _{n}+\Psi _{n}^{p(x)}\Big]dx
\\
& \leq C\Big(\int_{V_{n}^{+}}\Phi _{n}dx\Big)^{1/2}\Big[1+\int_{V_{n}^{+}}
\Psi _{n}^{p(x)}dx\Big] \\
& =C\Big(\int_{V_{n}^{+}}\Phi _{n}dx\Big)^{1/2}\Big[1+\int_{V_{n}^{+}}\big(
\left\vert \nabla u_{n}\right\vert +\left\vert \nabla u\right\vert \big)
^{p(x)}dx\Big].
\end{align*}

From (\ref{a11}) and the bounded property of $\{u_{n}\}$ in $X$, we have
\begin{equation} \label{3.1d}
\int_{V_{n}^{+}}\left\vert \nabla u_{n}-\nabla u\right\vert ^{p(x)}dx\leq 
\Big(\int_{V_{n}^{+}}\Phi _{n}dx\Big)^{1/2}\Big [1+\int_{V_{n}^{+}}\big(
\left\vert \nabla u_{n}\right\vert +\left\vert \nabla u\right\vert \big)
^{p(x)}dx\Big]\rightarrow 0. 
\end{equation}

Thus $\{\nabla u_{n}\}$ converges in measure to $\nabla u$ in $\Omega $, so
we have by Egorov's Theorem that $\nabla u_{n}(x)\rightarrow \nabla u(x)$
a.e. $x\in \Omega $ up to a subsequence. Consequently, we obtain, by Fatou's
Lemma, that
\begin{equation} \label{3.5}
\underset{n\rightarrow +\infty }{\underline{\lim }}\int_{\Omega }\frac{1}{
p(x)}\left\vert \nabla u_{n}\right\vert ^{p(x)}dx\geq \int_{\Omega }\frac{1}{
p(x)}\left\vert \nabla u\right\vert ^{p(x)}dx. 
\end{equation}

Similarly, we have
\begin{equation}\label{a.1}
\underset{n\rightarrow +\infty }{\underline{\lim }}\int_{\Omega }\frac{1}{
q(x)}\left\vert \nabla v_{n}\right\vert ^{q(x)}dx\geq \int_{\Omega }\frac{1}{
q(x)}\left\vert \nabla v\right\vert ^{q(x)}dx. 
\end{equation}

From $(u_{n},v_{n})\rightharpoonup (u,v)$ in $X$, we have
\begin{equation}\label{3.3}
\underset{n\rightarrow +\infty }{\lim }(\Phi ^{\prime
}(u_{n},v_{n}),(u_{n}-u,v_{n}-v))=\underset{n\rightarrow +\infty }{\lim }
(\Phi ^{\prime }(u_{n},v_{n})-\Phi ^{\prime }(u,v),(u_{n}-u,v_{n}-v))=0.
\end{equation}
We also have
\begin{eqnarray*}
&&(\Phi ^{\prime }(u_{n},v_{n}),(u_{n}-u,v_{n}-v)) \\
&=&\int_{\Omega }\left\vert \nabla u_{n}\right\vert ^{p(x)}dx-\int_{\Omega
}\left\vert \nabla u_{n}\right\vert ^{p(x)-2}\nabla u_{n}\nabla udx \\
&&+\int_{\Omega }\left\vert \nabla v_{n}\right\vert ^{q(x)}dx-\int_{\Omega
}\left\vert \nabla v_{n}\right\vert ^{q(x)-2}\nabla v_{n}\nabla vdx \\
&\geq &\int_{\Omega }\left\vert \nabla u_{n}\right\vert
^{p(x)}dx-\int_{\Omega }\left\vert \nabla u_{n}\right\vert
^{p(x)-1}\left\vert \nabla u\right\vert dx \\
&&+\int_{\Omega }\left\vert \nabla v_{n}\right\vert ^{q(x)}dx-\int_{\Omega
}\left\vert \nabla v_{n}\right\vert ^{q(x)-1}\left\vert \nabla v\right\vert
dx \\
&\geq &\int_{\Omega }\left\vert \nabla u_{n}\right\vert
^{p(x)}dx-\int_{\Omega }\Big(\frac{p(x)-1}{p(x)}\left\vert \nabla
u_{n}\right\vert ^{p(x)}+\frac{1}{p(x)}\left\vert \nabla u\right\vert ^{p(x)}
\Big)dx \\
&&+\int_{\Omega }\left\vert \nabla v_{n}\right\vert ^{q(x)}dx-\int_{\Omega }
\Big( \frac{q(x)-1}{q(x)}\left\vert \nabla v_{n}\right\vert ^{q(x)}+\frac{1}{
q(x)} \left\vert \nabla v\right\vert ^{q(x)}\Big)dx
\end{eqnarray*}

\begin{eqnarray}\label{3.4}
&\geq &\int_{\Omega }\frac{1}{p(x)}\left\vert \nabla u_{n}\right\vert
^{p(x)}dx+\int_{\Omega }\frac{1}{q(x)}\left\vert \nabla v_{n}\right\vert
^{q(x)}dx  \notag \\
&&-\int_{\Omega }\frac{1}{p(x)}\left\vert \nabla u\right\vert
^{p(x)}dx-\int_{\Omega }\frac{1}{q(x)}\left\vert \nabla v\right\vert
^{q(x)}dx. 
\end{eqnarray}
According to (\ref{3.5})-(\ref{3.4}), we obtain
\begin{equation}  \label{3.6}
\underset{n\rightarrow +\infty }{\lim }\int_{\Omega }\frac{1}{p(x)}
\left\vert \nabla u_{n}\right\vert ^{p(x)}dx=\int_{\Omega }\frac{1}{p(x)}
\left\vert \nabla u\right\vert ^{p(x)}dx.\end{equation}

It follows from (\ref{3.6}) that the integrals of the functions family $
\left\{ \frac{1}{p(x)}\left\vert \nabla u_{n}\right\vert ^{p(x)}\right\} $
possess absolute equicontinuity on $\Omega $ (see \cite{j10}, Ch.6, Section
3, Corollary 1, Theorem 4-5). Since
\begin{equation}\label{3.7}
\frac{1}{p(x)}\left\vert \nabla u_{n}(x)-\nabla u(x)\right\vert ^{p(x)}\leq
C(\frac{1}{p(x)}\left\vert \nabla u_{n}(x)\right\vert ^{p(x)}+\frac{1}{p(x)}
\left\vert \nabla u(x)\right\vert ^{p(x)}),
\end{equation}
the integrals of the family $\left\{ \frac{1}{p(x)}\left\vert \nabla
u_{n}(x)-\nabla u(x)\right\vert ^{p(x)}\right\} $ is also absolutely
equicontinuous on $\Omega $ (see \cite{j10}, Ch.6, Section3, Theorem 2) and
therefore
\begin{equation}\label{3.8}
\underset{n\rightarrow +\infty }{\lim }\int_{\Omega }\frac{1}{p(x)}
\left\vert \nabla u_{n}(x)-\nabla u(x)\right\vert ^{p(x)}dx=0. 
\end{equation}
By (\ref{3.8}), we conclude that
\begin{equation} \label{3.9}
\underset{n\rightarrow +\infty }{\lim }\int_{\Omega }\left\vert \nabla
u_{n}(x)-\nabla u(x)\right\vert ^{p(x)}dx=0. 
\end{equation}
From Proposition 2.4 and (\ref{3.9}), we have $u_{n}\rightarrow u$ in $
W_{0}^{1,p(\cdot )}(\Omega )$. Similarly, we have $v_{n}\rightarrow v$ in $
W_{0}^{1,q(\cdot )}(\Omega )$. Therefore, $(u_{n},v_{n})\rightarrow (u,v)$
in $X$. Thus, $\Phi ^{\prime }$ is of type $(S_{+})$.

iii) By the strict monotonicity, $\Phi ^{\prime }$ is an injection. Since
\begin{equation*}
\underset{\left\Vert (u,v)\right\Vert \rightarrow +\infty }{\lim }\frac{
(\Phi ^{\prime }(u,v),(u,v))}{\left\Vert (u,v)\right\Vert }=\underset{
\left\Vert (u,v)\right\Vert \rightarrow +\infty }{\lim }\frac{\int_{\Omega
}\left\vert \nabla u\right\vert ^{p(x)}dx+\int_{\Omega }\left\vert \nabla
v\right\vert ^{q(x)}dx}{\left\Vert (u,v)\right\Vert }=+\infty ,
\end{equation*}
$\Phi ^{\prime }$ is coercive, thus $\Phi ^{\prime }$ is a surjection in
view of Minty-Browder's Theorem (see \cite{j9}, Th.26A). Hence $\Phi
^{\prime } $ has an inverse mapping $(\Phi ^{\prime })^{-1}:X^{\ast
}\rightarrow X$. Therefore, the continuity of $(\Phi ^{\prime })^{-1}$ is
sufficient to ensure $\Phi ^{\prime }$ to be a homeomorphism.

If $f_{n}$, $f\in X^{\ast }$, $f_{n}\rightarrow f$ in $X^{\ast}$, let $
(u_{n},v_{n})=(\Phi ^{\prime })^{-1}(f_{n})$, $(u,v)=(\Phi ^{\prime
})^{-1}(f),$ then $\Phi ^{\prime }(u_{n},v_{n})=f_{n}$, $\Phi ^{\prime
}(u,v)=f$. So $\left\{ (u_{n},v_{n})\right\} $ is bounded in $X$. Without
loss of generality, we can assume that $(u_{n},v_{n})\rightharpoonup
(u_{0},v_{0})$ in $X$. Since $f_{n}\rightarrow f$ in $X^{\ast}$, we have
\begin{equation}
\underset{n\rightarrow +\infty }{\lim }(\Phi ^{\prime }((u_{n},v_{n}))-\Phi
^{\prime }(u_{0},v_{0}),(u_{n}-u_{0},v_{n}-v_{0})=\underset{n\rightarrow
+\infty }{\lim }(f_{n},(u_{n}-u_{0},v_{n}-v_{0}))=0.  \label{3.10}
\end{equation}
Since $\Phi ^{\prime }$ is of type $(S_{+})$, $(u_{n},v_{n})\rightarrow
(u_{0},v_{0}),$ we conclude that $(u_{n},v_{n})\rightarrow (u,v)$ in $X$, so
$(\Phi ^{\prime })^{-1}$ is continuous. The proof of Proposition 2.7 is
complete.
\end{proof}

Denote $B(x_{0},\varepsilon ,\delta ,\theta )=\{x\in \mathbb{R} ^{N}\mid
\delta \leq \left\vert x-x_{0}\right\vert \leq \varepsilon ,\frac{x-x_{0}}{
\left\vert x-x_{0}\right\vert }\cdot \frac{\nabla p(x_{0})}{\left\vert
\nabla p(x_{0})\right\vert }\geq \cos \theta \}$, where $\theta \in (0,\frac{
\pi }{2})$. Then we have

\begin{lemma}
If $p\in C^{1}(\overline{\Omega })$, $x_{0}\in \Omega $ satisfy $\nabla
p(x_{0})\neq 0$, then there exist a positive $\varepsilon $ small enough
such that
\begin{equation}
(x-x_{0})\cdot \nabla p(x)>0,\forall x\in B(x_{0},\varepsilon ,\delta
,\theta )\text{,}  \label{a3}
\end{equation}
and
\begin{equation}
\max \{p(x)\mid x\in \overline{B(x_{0},\varepsilon )}\}=\max \{p(x)\mid x\in
B(x_{0},\varepsilon ,\varepsilon ,\theta )\}.  \label{a4}
\end{equation}
\end{lemma}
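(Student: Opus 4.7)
The plan is to exploit the fact that $p\in C^1(\overline{\Omega})$ so that $\nabla p$ is continuous at $x_0$ and close to the nonzero vector $\nabla p(x_0)$ on a small ball, while using first-order Taylor expansion of $p$ to control where the maximum on $\overline{B(x_0,\varepsilon)}$ must lie.

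\textbf{Step 1 (inequality \eqref{a3}).} Since $\theta\in(0,\pi/2)$, we have $\cos\theta>0$. By continuity of $\nabla p$, choose $\varepsilon_0>0$ so small that
\[
|\nabla p(x)-\nabla p(x_0)|<\tfrac{1}{2}|\nabla p(x_0)|\cos\theta
\quad\text{for all }x\in \overline{B(x_0,\varepsilon_0)}.
\]
For any $x\in B(x_0,\varepsilon,\delta,\theta)$ with $\varepsilon\le\varepsilon_0$, write $x-x_0=|x-x_0|\hat u$ with $\hat u\cdot \nabla p(x_0)/|\nabla p(x_0)|\ge\cos\theta$. Then
\[
(x-x_0)\cdot\nabla p(x)=|x-x_0|\bigl[\hat u\cdot\nabla p(x_0)+\hat u\cdot(\nabla p(x)-\nabla p(x_0))\bigr]\ge |x-x_0|\cdot\tfrac{1}{2}|\nabla p(x_0)|\cos\theta>0,
\]
since $|x-x_0|\ge\delta>0$. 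This gives \eqref{a3}.

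\textbf{Step 2 (identity \eqref{a4}).} Let $x^{*}$ be a maximizer of $p$ on $\overline{B(x_0,\varepsilon)}$. First I rule out interior maxima. Shrinking $\varepsilon$ further if necessary so that $|\nabla p(x)|\ge\tfrac{1}{2}|\nabla p(x_0)|>0$ on the whole ball, a maximum in the open ball would force $\nabla p(x^{*})=0$, a contradiction. Hence $|x^{*}-x_0|=\varepsilon$, and I write $x^{*}=x_0+\varepsilon\hat u^{*}$ with $|\hat u^{*}|=1$.

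\textbf{Step 3 (the maximizer lies in the cap).} Set $y_\varepsilon:=x_0+\varepsilon\,\nabla p(x_0)/|\nabla p(x_0)|$, so $y_\varepsilon\in B(x_0,\varepsilon,\varepsilon,\theta)$. First-order Taylor expansion gives, uniformly on $\overline{B(x_0,\varepsilon)}$,
\[
p(x)=p(x_0)+\nabla p(x_0)\cdot(x-x_0)+r(x),\qquad |r(x)|\le\omega(\varepsilon)\,\varepsilon,
\]
where $\omega(\varepsilon)\to 0$ as $\varepsilon\to 0$ (this $\omega$ is the modulus of continuity of $\nabla p$ at $x_0$). Applying this at $x=y_\varepsilon$ and $x=x^{*}$ and using $p(x^{*})\ge p(y_\varepsilon)$ yields
\[
\varepsilon\,\nabla p(x_0)\cdot\hat u^{*}\ge \varepsilon|\nabla p(x_0)|-2\omega(\varepsilon)\,\varepsilon,
\]
i.e. $\hat u^{*}\cdot \nabla p(x_0)/|\nabla p(x_0)|\ge 1-2\omega(\varepsilon)/|\nabla p(x_0)|$. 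Hence by choosing $\varepsilon$ small enough that $2\omega(\varepsilon)/|\nabla p(x_0)|\le 1-\cos\theta$, we obtain $\hat u^{*}\cdot \nabla p(x_0)/|\nabla p(x_0)|\ge\cos\theta$, so $x^{*}\in B(x_0,\varepsilon,\varepsilon,\theta)$, which proves \eqref{a4}.

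\textbf{Main obstacle.} The only subtle point is Step 3: one needs the Taylor remainder to be small \emph{relative to} the linear part $\varepsilon|\nabla p(x_0)|$, and this is precisely why the $C^1$ hypothesis (which gives $o(\varepsilon)$ remainder, uniform in direction) is used in an essential way. Once the remainder is controlled, the argument forces the maximizer arbitrarily close to the ``north pole'' $y_\varepsilon$, and hence into any prescribed cap of opening angle $\theta>0$.
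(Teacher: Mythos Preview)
Your proof is correct and follows essentially the same approach as the paper: both arguments use continuity of $\nabla p$ to establish \eqref{a3}, and then a first-order Taylor expansion compared against the ``north pole'' point $x_0+\varepsilon\,\nabla p(x_0)/|\nabla p(x_0)|$ to locate the maximizer. The only cosmetic difference is that the paper first shows the maximum over $\overline{B(x_0,\varepsilon)}$ equals the maximum over the full conical region $B(x_0,\varepsilon,\delta,\theta)$ and then invokes \eqref{a3} to push it to the outer cap, whereas you rule out interior maxima directly via $\nabla p\neq 0$; both routes are equivalent.
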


\begin{proof}
Since $p\in C^{1}(\overline{\Omega })$, for any $x\in B(x_{0},\varepsilon
,\delta ,\theta )$, when $\varepsilon $ is small enough, it is easy to see
that
\begin{eqnarray*}
\nabla p(x)\cdot (x-x_{0}) &=&(\nabla p(x_{0})+o(1))\cdot (x-x_{0}) \\
&=&\nabla p(x_{0})\cdot (x-x_{0})+o(\left\vert x-x_{0}\right\vert ) \\
&\geq &\left\vert \nabla p(x_{0})\right\vert \left\vert x-x_{0}\right\vert
\cos \theta +o(\left\vert x-x_{0}\right\vert )>0,
\end{eqnarray*}
where $o(1)\in\mathbb{R}^{N}$ is a function and $o(1)\rightarrow 0$ uniformly as $\left\vert
x-x_{0}\right\vert \rightarrow 0$.

When $\varepsilon $ is small enough, (\ref{a3}) is valid. Since $p\in C^{1}(
\overline{\Omega })$, there exist a small enough positive $\varepsilon $
such that
\begin{equation*}
p(x)-p(x_{0})=\nabla p(y)\cdot (x-x_{0})=(\nabla p(x_{0})+o(1))\cdot
(x-x_{0}),\text{ }
\end{equation*}
where $y=x_{0}+\tau (x-x_{0})$ and $\tau \in (0,1)$, $o(1)\in\mathbb{R}^{N}$ is a function and $o(1)\rightarrow 0$ uniformly as $\left\vert
x-x_{0}\right\vert \rightarrow 0$.

Suppose $x\in \overline{B(x_{0},\varepsilon )}\backslash B(x_{0},\varepsilon
,\delta ,\theta )$. Denote $x^{\ast }=x_{0}+\varepsilon \nabla
p(x_{0})/\left\vert \nabla p(x_{0})\right\vert $.

Suppose $\frac{x-x_{0}}{\left\vert x-x_{0}\right\vert }\cdot \frac{\nabla
p(x_{0})}{\left\vert \nabla p(x_{0})\right\vert }<\cos \theta $. When $
\varepsilon $ is small enough, we have
\begin{eqnarray*}
p(x)-p(x_{0}) &=&(\nabla p(x_{0})+o(1))\cdot (x-x_{0}) \\
&<&\left\vert \nabla p(x_{0})\right\vert \left\vert x-x_{0}\right\vert \cos
\theta +o(\varepsilon ) \\
&\leq &(\nabla p(x_{0})+o(1))\cdot \varepsilon \nabla p(x_{0})/\left\vert
\nabla p(x_{0})\right\vert \\
&=&p(x^{\ast })-p(x_{0}),
\end{eqnarray*}
where $o(1)\in\mathbb{R}^{N}$ is a function and $o(1)\rightarrow 0$ as $\varepsilon \rightarrow 0$.

Suppose $\left\vert x-x_{0}\right\vert <\delta $. When $\varepsilon $ is
small enough, we have
\begin{eqnarray*}
p(x)-p(x_{0}) &=&(\nabla p(x_{0})+o(1))\cdot (x-x_{0}) \\
&\leq &\left\vert \nabla p(x_{0})\right\vert \left\vert x-x_{0}\right\vert
+o(\varepsilon ) \\
&<&(\nabla p(x_{0})+o(1))\cdot \varepsilon \nabla p(x_{0})/\left\vert \nabla
p(x_{0})\right\vert \\
&=&p(x^{\ast })-p(x_{0}),
\end{eqnarray*}
where $o(1)\in\mathbb{R}^{N}$ is a function and $o(1)\rightarrow 0$ as $\varepsilon \rightarrow 0$.
Thus
\begin{equation}
\max \{p(x)\mid x\in \overline{B(x_{0},\varepsilon )}\}=\max \{p(x)\mid x\in
B(x_{0},\varepsilon ,\delta ,\theta )\}.  \label{d1}
\end{equation}

It follows from (\ref{a3}) and (\ref{d1}) that (\ref{a4}) is valid. Proof of
Lemma 2.9 is complete.
\end{proof}

\begin{lemma}
Suppose that $F(x,u,v)$ satisfies the following inequality
\begin{equation*}
C_{1}\left\vert u\right\vert ^{p(x)}[\ln (e+\left\vert u\right\vert
)]^{a(x)}+C_{1}\left\vert v\right\vert ^{q(x)}[\ln (e+\left\vert
v\right\vert )]^{b(x)}\leq F(x,u,v),\forall \left\vert u\right\vert
+\left\vert v\right\vert \geq M,\forall x\in \Omega ,
\end{equation*}
where $a(\cdot )>p(\cdot )$ and $b(\cdot )>q(\cdot )$ on $\overline{\Omega }$
, and $x_{1},x_{2}\in \Omega $ are two different points such that $\nabla
p(x_{1})\neq 0$ and $\nabla q(x_{2})\neq 0$. Let
\begin{equation*}
h_{1}(x)=\left\{
\begin{array}{cc}
0, & \left\vert x-x_{1}\right\vert >\varepsilon \\
\varepsilon -\left\vert x-x_{1}\right\vert , & \left\vert x-x_{1}\right\vert
\leq \varepsilon
\end{array}
\right. ,
\end{equation*}
and

\begin{equation*}
h_{2}(x)=\left\{
\begin{array}{cc}
0, & \left\vert x-x_{2}\right\vert >\varepsilon \\
\varepsilon -\left\vert x-x_{2}\right\vert , & \left\vert x-x_{2}\right\vert
\leq \varepsilon
\end{array}
\right. ,
\end{equation*}
where $\varepsilon $ is as defined in Lemma 2.9 and small enough such that $
\varepsilon <\left\vert x_{2}-x_{1}\right\vert $. Then there holds
\begin{equation*}
\int_{\Omega }\left\vert \nabla t\,h_{1}\right\vert ^{p(x)}dx+\int_{\Omega
}\left\vert \nabla t\,h_{2}\right\vert ^{q(x)}dx-\int_{\Omega }\lambda
\left\vert t\,h_{1}\right\vert ^{\alpha (x)}\left\vert t\,h_{2}\right\vert
^{\beta (x)}+F(x,t\,h_{1},t\,h_{2})dx\rightarrow -\infty ,
\end{equation*}
as $t\rightarrow +\infty $.
\end{lemma}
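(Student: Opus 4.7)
The plan is to exploit the disjoint supports of $h_1,h_2$ to reduce the problem to two analogous single-block estimates, and then to show that the logarithmic gain from the $F$-lower bound in the lemma beats the purely power-type growth of the gradient term.

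I would first observe that, since $\varepsilon<|x_2-x_1|$, the balls $B(x_1,\varepsilon)$ and $B(x_2,\varepsilon)$ are disjoint, so $h_1h_2\equiv 0$. Hence the cross term $\lambda|th_1|^{\alpha(x)}|th_2|^{\beta(x)}$ vanishes identically, and (assuming WLOG $F(x,0,0)=0$) $F(x,th_1,th_2)$ equals $F(x,th_1,0)$ on $B(x_1,\varepsilon)$, $F(x,0,th_2)$ on $B(x_2,\varepsilon)$, and $0$ elsewhere. Since $|\nabla h_1|\equiv 1$ a.e.\ on $B(x_1,\varepsilon)$ and likewise for $h_2$, the target expression splits as $E_1(t)+E_2(t)$ where
\[
E_1(t):=\int_{B(x_1,\varepsilon)} t^{p(x)}\,dx-\int_{B(x_1,\varepsilon)} F(x,th_1,0)\,dx
\]
and $E_2(t)$ is its $(q,x_2,h_2,b)$-analogue. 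By symmetry, it suffices to show $E_1(t)\to-\infty$.

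The main idea is a pointwise logarithmic gain of $F$ over the gradient integrand. On the interior $B(x_1,\varepsilon-1/\ln t)$ one has $h_1\geq 1/\ln t$, so $th_1\geq t/\ln t>M$ for $t$ large, and the lemma's hypothesis gives $F(x,th_1,0)\geq C_1\,h_1^{p(x)}\,t^{p(x)}\,[\ln(e+th_1)]^{a(x)}\gtrsim c\,h_1^{p(x)}(\ln t)^{a(x)}\,t^{p(x)}$, using $\ln(e+th_1)\sim\ln t$. By Lemma 2.9, the maximum $p_1^+$ of $p$ on $\overline{B(x_1,\varepsilon)}$ is attained at some $x_1^*\in\partial B(x_1,\varepsilon)$ in the direction of $\nabla p(x_1)$, with $p$ monotonically increasing along the radial ray into $x_1^*$; locally $p(x)\approx p_1^+-Ar-B|\theta|^2$ in inward radial $(r)$ and tangential $(\theta)$ coordinates. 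A Laplace-type expansion would then yield $\int_{B(x_1,\varepsilon)}t^{p(x)}\,dx\asymp t^{p_1^+}/(\ln t)^{(N+1)/2}$, concentrated on the box where $r\lesssim 1/\ln t$ and $|\theta|\lesssim 1/\sqrt{\ln t}$. Restricting $\int F$ to this same box, where $h_1=r\gtrsim 1/\ln t$, the extra factor $h_1^{p(x)}(\ln t)^{a(x)}$ contributes at least $c(\ln t)^{a(x_1^*)-p_1^+}$, so the analogous Laplace analysis gives $\int F\gtrsim t^{p_1^+}(\ln t)^{a(x_1^*)-p_1^+-(N+1)/2}$.

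The main obstacle will be the tension that $p$ is maximized exactly where $h_1$ is smallest, which blocks any naive choice of a region with both $h_1\geq c>0$ and $p(x)$ close to $p_1^+$. The resolution is to let the scale of the relevant neighborhood shrink like $1/\ln t$, so that $p$ stays within $O(1/\ln t)$ of $p_1^+$ while $th_1$ still blows up; the pointwise gain $(\ln t)^{a(x)-p(x)}\geq(\ln t)^{a(x_1^*)-p(x_1^*)}$ then absorbs the geometric loss. Combining the two Laplace estimates yields $\int F\gtrsim(\ln t)^{a(x_1^*)-p_1^+}\int t^{p(x)}\,dx$, whence $E_1(t)\leq\bigl(1-c(\ln t)^{a(x_1^*)-p_1^+}\bigr)\int t^{p(x)}\,dx\to-\infty$, since $a(x_1^*)>p(x_1^*)=p_1^+$ by the hypothesis $a>p$. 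An identical argument, with $(q,b,x_2)$ in place of $(p,a,x_1)$ and the analogue of Lemma 2.9 at $x_2$, handles $E_2$ and completes the proof.
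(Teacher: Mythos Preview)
Your strategy matches the paper's: reduce to a one-block estimate and show that the logarithmic gain in the $F$-lower bound beats the pure power growth of $\int t^{p(x)}\,dx$, by localizing near the boundary maximum of $p$ at scale $1/\ln t$. Two points deserve attention.

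First, a minor issue: the hypothesis $\varepsilon < |x_1 - x_2|$ does not force the supports of $h_1,h_2$ to be disjoint (that would need $2\varepsilon < |x_1 - x_2|$), so the cross term need not vanish and $F(x,th_1,th_2)$ need not split as you claim. The paper handles this differently: by $(H_{\alpha,\beta})$ and Young's inequality one has $|u|^{\alpha(x)}|v|^{\beta(x)} \le |u|^{\theta p(x)} + |v|^{\theta q(x)} + 1$ for some $\theta\in(0,1)$, making the cross term a lower-order perturbation; and the hypothesis on $F$ already gives an additive lower bound $F\ge C_1|u|^{p(x)}[\ln(e+|u|)]^{a(x)} + C_1|v|^{q(x)}[\ln(e+|v|)]^{b(x)}$. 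This reduces the problem to showing $G(th_1):=\int\frac{1}{p(x)}|\nabla th_1|^{p(x)} - \int C_1|th_1|^{p(x)}[\ln(e+|th_1|)]^{a(x)}\to -\infty$ (and its $(q,h_2,b)$-analogue), with no disjointness needed.

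Second, and more substantively: your Laplace step posits a unique boundary maximizer $x_1^*$ with a quadratic tangential expansion $p\approx p_1^+ - Ar - B|\theta|^2$, yielding $\int_{B(x_1,\varepsilon)} t^{p(x)}\,dx \asymp t^{p_1^+}/(\ln t)^{(N+1)/2}$. This requires $p\in C^2$, whereas the paper only assumes $p\in C^1(\overline{\Omega})$; under $C^1$ the maximum on $\partial B(x_1,\varepsilon)$ need not be isolated and no quadratic tangential decay is available. The paper avoids any tangential analysis: it works on the full conical sector $B(x_1,\varepsilon,\delta,\theta)$ of Lemma~2.9, uses only the radial two-sided linear bound $p(\varepsilon,\omega)-c_2(\varepsilon-r)\le p(r,\omega)\le p(\varepsilon,\omega)-c_1(\varepsilon-r)$ (a consequence of $(x-x_1)\cdot\nabla p(x)>0$ there, needing only $C^1$), and integrates in $r$ explicitly to bound both terms by the \emph{same} angular integral $\int \frac{t^{p(\varepsilon,\omega)}}{\ln t}\,d\omega$. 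Restricting the $F$-integral to $r\in[\delta,\varepsilon-1/\ln t]$ (exactly your scale) then produces the extra factor $(\ln t)^{\epsilon_1}$, where $\epsilon_1>0$ comes from $a(\cdot)>p(\cdot)$ on the closure, and the conclusion follows. Your argument is repairable along these lines; the key observation is that comparing both integrals to a common $\omega$-integral removes any need to understand the tangential profile of $p$.
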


\begin{proof}
According to $(H_{\alpha ,\beta })$, there is a constant $\theta \in (0,1)$
such that
\begin{equation*}
\frac{\alpha (x)}{\theta p(x)}+\frac{\beta (x)}{\theta q(x)}\leq 1,\forall
x\in \overline{\Omega }.
\end{equation*}
Therefore, we have
\begin{equation*}
\left\vert u\right\vert ^{\alpha (x)}\left\vert v\right\vert ^{\beta
(x)}\leq \frac{\alpha (x)}{\theta p(x)}\left\vert u\right\vert ^{\alpha (x)
\frac{\theta p(x)}{\alpha (x)}}+(\frac{\theta p(x)}{\alpha (x)}
)^{0}\left\vert v\right\vert ^{\beta (x)(\frac{\theta p(x)}{\alpha (x)}
)^{0}}\leq \left\vert u\right\vert ^{\theta p(x)}+\left\vert v\right\vert
^{\theta q(x)}+1.
\end{equation*}

To complete the proof of this lemma, it is sufficient to show that
\begin{equation*}
G(th_1):=\int_{\Omega }\frac{1}{p(x)}\left\vert \nabla th_{1}\right\vert
^{p(x)}-\int_{\Omega }C_{1}\left\vert th_{1}\right\vert ^{p(x)}[\ln
(e+\left\vert th_{1}\right\vert )]^{a(x)}dx\rightarrow -\infty \text{ as }
t\rightarrow +\infty .
\end{equation*}

It is easy to see the following two inequalities hold:
\begin{equation*}
\int_{\Omega }\frac{1}{p(x)}\left\vert \nabla th_{1}\right\vert
^{p(x)}dx\leq C_{2}\int_{B(x_{0},\varepsilon ,\delta ,\theta )}\left\vert
\nabla th_{1}\right\vert ^{p(x)}dx,
\end{equation*}
\begin{equation*}
\int_{\Omega }C_{1}\left\vert th_{1}\right\vert ^{p(x)}[\ln (e+\left\vert
th_{1}\right\vert )]^{a(x)}dx\geq \int_{B(x_{0},\varepsilon ,\delta ,\theta
)}C_{1}\left\vert th_{1}\right\vert ^{p(x)}[\ln (e+\left\vert
th_{1}\right\vert )]^{a(x)}dx.
\end{equation*}

To proceed, we shall use polar coordinates. Let $r=\left\vert
x-x_{0}\right\vert $. Since $p\in C^{1}(\overline{\Omega })$, it follows
from (\ref{a3}) that there exist positive constants $c_{1}$ and $c_{2}$ such
that
\begin{equation*}
p(\varepsilon ,\omega )-c_{2}(\varepsilon -r)\leq p(r,\omega )\leq
p(\varepsilon ,\omega )-c_{1}(\varepsilon -r),\forall (r,\omega )\in
B(x_{0},\varepsilon ,\delta ,\theta ).
\end{equation*}

Therefore, we have
\begin{eqnarray}
\int_{B(x_{0},\varepsilon ,\delta ,\theta )}\left\vert \nabla
th_{1}\right\vert ^{p(x)}dx &=&\int_{B(x_{0},\varepsilon ,\delta ,\theta
)}t^{p(r,\omega )}r^{N-1}drd\omega  \notag \\
&\leq &\int_{B(x_{0},\varepsilon ,\delta ,\theta )}t^{p(\varepsilon ,\omega
)-c_{1}(\varepsilon -r)}r^{N-1}drd\omega  \notag \\
&\leq &\varepsilon ^{N-1}\int_{B(x_{0},\varepsilon ,\delta ,\theta
)}t^{p(\varepsilon ,\omega )-c_{1}(\varepsilon -r)}drd\omega  \notag \\
&\leq &\varepsilon ^{N-1}\int_{B(x_{0},1,1,\theta )}\frac{t^{p(\varepsilon
,\omega )}}{c_{1}\ln t}d\omega .  \label{a5}
\end{eqnarray}

Since $p\in C^{1}(\overline{\Omega })$ and $a(\cdot )>p(\cdot )$ on $
\overline{\Omega }$, we conclude that, for $\varepsilon $ small enough,
there exists a $\epsilon _{1}>0$ such that
\begin{equation*}
a(x)\geq \max \{p(x)+\epsilon _{1}\mid x\in B(x_{0},\varepsilon ,\delta
,\theta )\}.
\end{equation*}

Thus, when $t$ is large enough, we have
\begin{eqnarray*}
&&\int_{B(x_{0},\varepsilon ,\delta ,\theta )}C_{1}\left\vert
th_{1}\right\vert ^{p(x)}[\ln (e+\left\vert th_{1}\right\vert )]^{a(x)}dx \\
&=&\int_{B(x_{0},\varepsilon ,\delta ,\theta )}C_{1}\left\vert t(\varepsilon
-r)\right\vert ^{p(r,\omega )}r^{N-1}[\ln (e+\left\vert t(\varepsilon
-r)\right\vert )]^{a(r,\omega )}drd\omega \\
&\geq &C_{1}\delta ^{N-1}\int_{B(x_{0},\varepsilon ,\delta ,\theta
)}\left\vert t\right\vert ^{p(\varepsilon ,\omega )-c_{2}(\varepsilon
-r)}\left\vert \varepsilon -r\right\vert ^{p(\varepsilon ,\omega
)-c_{1}(\varepsilon -r)}[\ln (e+\left\vert t(\varepsilon -r)\right\vert
)]^{a(r,\omega )}drd\omega \\
&\geq &C_{1}\delta ^{N-1}\int_{B(x_{0},1,1,\theta )}d\omega \int_{\delta
}^{\varepsilon -\frac{1}{\ln t}}\left\vert t\right\vert ^{p(\varepsilon
,\omega )-c_{2}(\varepsilon -r)}\left\vert \varepsilon -r\right\vert
^{p(\varepsilon ,\omega )-c_{1}(\varepsilon -r)}[\ln (e+\left\vert
t(\varepsilon -r)\right\vert )]^{a(r,\omega )}dr \\
&\geq &C_{3}\delta ^{N-1}\int_{B(x_{0},1,1,\theta )}(\frac{1}{\ln t}
)^{p(\varepsilon ,\omega )}[\ln (e+\frac{t}{\ln t})]^{p(\varepsilon ,\omega
)+\epsilon _{1}}\int_{\delta }^{\varepsilon -\frac{1}{\ln t}}\left\vert
t\right\vert ^{p(\varepsilon ,\omega )-c_{2}(\varepsilon -r)}drd\omega \\
&\geq &C_{4}\delta ^{N-1}\int_{B(x_{0},1,1,\theta )}(\ln t)^{\epsilon _{1}}
\frac{\left\vert t\right\vert ^{p(\varepsilon ,\omega )-\frac{c_{2}}{\ln t}}
}{c_{2}\ln t}d\omega \\
&\geq &(\ln t)^{\epsilon _{1}}C_{5}\delta ^{N-1}\int_{B(x_{0},1,1,\theta )}
\frac{\left\vert t\right\vert ^{p(\varepsilon ,\omega )}}{c_{2}\ln t}d\omega.
\end{eqnarray*}

Hence, we have
\begin{equation} \label{a6}
\int_{B(x_{0},\varepsilon ,\delta ,\theta )}C_{1}\left\vert
th_{1}\right\vert ^{p(x)}[\ln (e+\left\vert t\,h_{1}\right\vert
)]^{a(x)}dx\geq (\ln t)^{\epsilon _{1}}C_{5}\int_{B(x_{0},1,1,\theta )}\frac{
\left\vert t\right\vert ^{p(\varepsilon ,\omega )}}{\ln t}d\omega \text{ as }
t\rightarrow +\infty . \end{equation}

It follows from (\ref{a5}) and (\ref{a6}) that $G(t\, h_{1})\rightarrow -\infty
$. The proof of Lemma 2.10 is complete.
\end{proof}

We have the following simple proposition concerning the growth rate of the
nonlinearity $F(\cdot, \cdot, \cdot)$.

\begin{proposition}
(see \cite{e17}) (i) If $F$ satisfies
\begin{equation*}
0<F(x,s,t)\leq \frac{1}{\theta _{1}}sF_{s}(x,s,t)+\frac{1}{\theta _{2}}
tF_{t}(x,s,t)\ \text{for }x\in \overline{\Omega }\text{ and }\left\vert
s\right\vert ^{\theta _{1}}+\left\vert t\right\vert ^{\theta _{2}}\geq 2M,
\end{equation*}
then $F(x,s,t)\geq c_{1}[(\left\vert s\right\vert ^{\theta _{1}}+\left\vert
t\right\vert ^{\theta _{2}})-1],\forall (x,s,t)\in \overline{\Omega }\times
\mathbb{R} \times \mathbb{R}$.
\end{proposition}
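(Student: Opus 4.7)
The plan is to reduce the bivariate lower bound to a one-dimensional ODE inequality via an anisotropic scaling that respects the exponents $\theta_1, \theta_2$. For a fixed $x \in \overline{\Omega}$ and a fixed pair $(s_0, t_0)$ with $r_0 := |s_0|^{\theta_1} + |t_0|^{\theta_2} \geq 2M$, I would introduce the one-variable function
\[
g(\tau) := F\bigl(x, \tau^{1/\theta_1} s_0, \tau^{1/\theta_2} t_0\bigr), \qquad \tau > 0,
\]
so that the scaled norm satisfies $|\tau^{1/\theta_1} s_0|^{\theta_1} + |\tau^{1/\theta_2} t_0|^{\theta_2} = \tau r_0$. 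A direct differentiation gives
\[
g'(\tau) = \frac{1}{\tau}\left[\frac{1}{\theta_1}F_s \bigl(\tau^{1/\theta_1} s_0\bigr) + \frac{1}{\theta_2}F_t \bigl(\tau^{1/\theta_2} t_0\bigr)\right],
\]
where the arguments of $F_s, F_t$ are $(x, \tau^{1/\theta_1} s_0, \tau^{1/\theta_2} t_0)$. The hypothesis then applies exactly when $\tau r_0 \geq 2M$, i.e.\ for $\tau \geq \tau_0 := 2M/r_0 \in (0,1]$, and yields the key differential inequality
\[
g'(\tau) \geq \frac{g(\tau)}{\tau}, \qquad \tau \in [\tau_0, 1].
\]

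Next, since $g > 0$ on this interval (by the positivity assumption), the inequality rewrites as $(\log g)' \geq (\log \tau)'$, which I would integrate from $\tau_0$ to $1$ to obtain
\[
F(x, s_0, t_0) = g(1) \geq g(\tau_0) \cdot \frac{1}{\tau_0} = F\bigl(x, \tau_0^{1/\theta_1} s_0, \tau_0^{1/\theta_2} t_0\bigr) \cdot \frac{r_0}{2M}.
\]
The point $(\tau_0^{1/\theta_1} s_0, \tau_0^{1/\theta_2} t_0)$ lies on the compact anisotropic sphere $S_{2M} := \{(s,t) : |s|^{\theta_1} + |t|^{\theta_2} = 2M\}$, and by hypothesis $F$ is strictly positive (and continuous) on the compact set $\overline{\Omega} \times S_{2M}$. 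Hence $F$ attains a positive minimum $c_0 > 0$ there, giving
\[
F(x, s_0, t_0) \geq \frac{c_0}{2M}\bigl(|s_0|^{\theta_1} + |t_0|^{\theta_2}\bigr) \quad \text{whenever } |s_0|^{\theta_1} + |t_0|^{\theta_2} \geq 2M.
\]

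For the complementary region $|s|^{\theta_1} + |t|^{\theta_2} < 2M$, continuity of $F$ on the compact set $\overline{\Omega} \times \{|s|^{\theta_1} + |t|^{\theta_2} \leq 2M\}$ gives a uniform lower bound $F \geq -K$. Combining both regions and choosing $c_1 > 0$ small enough (absorbing the constant $K$ into the $-c_1$ term of the statement), one obtains the claimed inequality $F(x,s,t) \geq c_1[(|s|^{\theta_1} + |t|^{\theta_2}) - 1]$ on all of $\overline{\Omega} \times \mathbb{R} \times \mathbb{R}$.

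The main obstacle I expect is setting up the scaling cleanly: one must choose the anisotropic family $\tau \mapsto (\tau^{1/\theta_1}s_0, \tau^{1/\theta_2}t_0)$ so that the hypothesis translates into a clean Gronwall-type inequality $g' \geq g/\tau$, rather than something genuinely two-dimensional. Once this scaling is identified, the remaining steps (integrating the ODE, invoking compactness of $S_{2M}$, and stitching the two regions together) are routine.
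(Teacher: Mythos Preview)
The paper does not give its own proof of this proposition; it is simply quoted from \cite{e17} (El Hamidi), so there is nothing in the paper to compare your argument against. Your approach via the anisotropic scaling $\tau\mapsto(\tau^{1/\theta_1}s_0,\tau^{1/\theta_2}t_0)$, which converts the hypothesis into the Gronwall inequality $g'(\tau)\ge g(\tau)/\tau$ and then integrates down to the compact level set $S_{2M}$, is exactly the standard proof of this type of Ambrosetti--Rabinowitz consequence and is correct.

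One small remark on the final ``stitching'' step: what your argument actually yields is
\[
F(x,s,t)\ \ge\ c_1\bigl(|s|^{\theta_1}+|t|^{\theta_2}\bigr)-c_2
\]
for two positive constants $c_1,c_2$, not literally $c_1[(|s|^{\theta_1}+|t|^{\theta_2})-1]$ with the \emph{same} constant in both places. In the region $1\le |s|^{\theta_1}+|t|^{\theta_2}<2M$ you only know $F\ge -K$, and no choice of a single small $c_1>0$ makes $c_1(r-1)\le -K$ there. This is not a defect of your method but of the statement as printed: in the cited reference and in how the result is actually used, the conclusion is of the form $c_1 r - c_2$ (or equivalently $c_1(r-C)$ for a large $C$), which is precisely what you obtain.
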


\section{Proofs of main results}

With the preparations in the last section, we will in this section give our
proofs of the main results. To be rigorous, we first give the definition of
a weak solution to the problem $(P)$.

\begin{definition}
(i) We call $\left( u,v\right) \in X$ is a weak solution of $(P)$ if
\begin{eqnarray*}
\int_{\Omega }\left\vert \nabla u\right\vert ^{p(x)-2}\nabla u\cdot \nabla
\phi dx &=&\int_{\Omega }(\lambda \alpha (x)\left\vert u\right\vert ^{\alpha
(x)-2}u\left\vert v\right\vert ^{\beta (x)}+F_{u}(x,u,v))\phi dx,\forall
\phi \in W_{0}^{1,p(\cdot )}(\Omega ), \\
\int_{\Omega }\left\vert \nabla v\right\vert ^{q(x)-2}\nabla v\cdot \nabla
\psi dx &=&\int_{\Omega }(\lambda \beta (x)\left\vert u\right\vert ^{\alpha
(x)}\left\vert v\right\vert ^{\beta (x)-2}v+F_{v}(x,u,v))\psi dx,\forall
\psi \in W_{0}^{1,q(\cdot )}(\Omega ).
\end{eqnarray*}
\end{definition}

The corresponding functional of $(P)$ is given by $\varphi=\varphi(u, v)$
defined below on $X$:
\begin{eqnarray*}
\varphi \left( u,v\right) &=&\Phi (u,v)-\Psi (u,v) \\
&=&\int_{\Omega }\frac{1}{p(x)}\left\vert \nabla u\right\vert ^{p(x)}+\frac{1
}{q(x)}\left\vert \nabla v\right\vert ^{q(x)}dx-\int_{\Omega }\lambda
\left\vert u\right\vert ^{\alpha (x)}\left\vert v\right\vert ^{\beta
(x)}+F(x,u,v)dx,\forall \left( u,v\right) \in X.
\end{eqnarray*}

As compactness is crucial in showing the existence of weak solutions via
critical point theory. We shall introduce the type of compactness which we
shall use in the current study, i.e., the Cerami compactness condition.
\newline

\begin{definition}
\label{Cerami} We say $\varphi $ satisfies Cerami condition in $X$, if any
sequence $\left\{ u_{n}\right\} \subset X$ such that $\left\{ \varphi
(u_{n},v_{n})\right\} $ is bounded and $\left\Vert \varphi ^{\prime
}(u_{n},v_{n})\right\Vert (1+\left\Vert (u_{n},v_{n})\right\Vert
)\rightarrow 0$ as $n\rightarrow +\infty $ has a convergent subsequence.
\end{definition}

It is well-known that the Cerami condition is weaker than the usual
Palais-Samle condition. Under our new growth condition for the system under
investigation, we manage to show that the corresponding functional $\varphi$
satisfies the above Cerami type compactness condition which is sufficient to
yield critical points. More specifically, we have the following lemma.
\newline

\begin{lemma}
If $(H_{\alpha ,\beta })$, $(H_{0})$ and $(H_{1})$ hold, then $\varphi $
satisfies the Cerami condition.
\end{lemma}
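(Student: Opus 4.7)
The plan is to follow the standard Cerami strategy in two stages: first, prove that an arbitrary Cerami sequence $\{(u_n,v_n)\}\subset X$ is bounded, and then use the $(S_+)$ property of $\Phi'$ from Proposition 2.7(ii) to upgrade a weakly convergent subsequence to a strongly convergent one. All the novelty sits in the boundedness step, since the absence of an Ambrosetti--Rabinowitz condition rules out the classical algebraic cancellation between $\varphi$ and $\varphi'$ via a single constant multiplier.

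For boundedness, I would argue by contradiction, assuming $\|(u_n,v_n)\|\to\infty$. The key combination to evaluate is
\begin{equation*}
\varphi(u_n,v_n)-\Big\langle \varphi'(u_n,v_n),\Big(\tfrac{u_n}{p(\cdot)},\tfrac{v_n}{q(\cdot)}\Big)\Big\rangle.
\end{equation*}
This pairing is tailored so that the leading weighted Dirichlet integrands $|\nabla u_n|^{p(x)}/p(x)$ and $|\nabla v_n|^{q(x)}/q(x)$ cancel, leaving only correction terms produced by the product rule $\nabla(u_n/p(x))=\nabla u_n/p(x)-u_n\nabla p/p(x)^2$, which (thanks to $p,q\in C^1(\overline\Omega)$) can be absorbed via Young's inequality into a small multiple of the Dirichlet integrals and a bounded $L^{p(\cdot)}$ contribution. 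The test pair $(u_n/p(\cdot),v_n/q(\cdot))$ has $X$-norm comparable to $\|(u_n,v_n)\|$, so the Cerami hypothesis makes the whole left-hand side $O(1)$. The reduced right-hand side is
\begin{equation*}
\int_\Omega\Big[\tfrac{F_u u_n}{p(x)}+\tfrac{F_v v_n}{q(x)}-F(x,u_n,v_n)\Big]dx-\int_\Omega\lambda\Big(1-\tfrac{\alpha(x)}{p(x)}-\tfrac{\beta(x)}{q(x)}\Big)|u_n|^{\alpha(x)}|v_n|^{\beta(x)}dx,
\end{equation*}
where by $(H_1)$ the first integrand is bounded below on $\{|u_n|+|v_n|\geq M\}$ by $C_1\bigl(|u_n|^{p(x)}[\ln(e+|u_n|)]^{a(x)-1}+|v_n|^{q(x)}[\ln(e+|v_n|)]^{b(x)-1}\bigr)$, and by $(H_{\alpha,\beta})$ together with Young's inequality the coupling term is dominated by $|u_n|^{\theta p(x)}+|v_n|^{\theta q(x)}+1$ for some $\theta\in(0,1)$, a sub-$(p,q)$ growth absorbed into a small fraction of the logarithmic lower bound. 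I conclude that $\int(|u_n|^{p(x)}[\ln(e+|u_n|)]^{a(x)-1}+|v_n|^{q(x)}[\ln(e+|v_n|)]^{b(x)-1})dx$ is uniformly bounded.

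Feeding this back into the separated Cerami identities $D_1\varphi(u_n,v_n)(u_n)=o(1)$ and $D_2\varphi(u_n,v_n)(v_n)=o(1)$, and using the middle inequality of $(H_1)$ together with $(H_0)$ and \eqref{a.2} to control $\int(F_uu_n+F_vv_n)dx$, I obtain boundedness of $\int|\nabla u_n|^{p(x)}+\int|\nabla v_n|^{q(x)}$; Proposition 2.3 then converts this to boundedness of $\|u_n\|_{p(\cdot)}$ and $\|v_n\|_{q(\cdot)}$, a contradiction. With boundedness secured, reflexivity yields a weak limit $(u_n,v_n)\rightharpoonup(u,v)$ in $X$; the compact embeddings of Proposition 2.5(ii) give strong convergence in $L^{\gamma(\cdot)}$, $L^{\delta(\cdot)}$, $L^{p(\cdot)}$ and $L^{q(\cdot)}$; Proposition 2.2 then sends the nonlinear terms in $\varphi'(u_n,v_n)(u_n-u,v_n-v)=o(1)$ to zero, leaving $\langle\Phi'(u_n,v_n)-\Phi'(u,v),(u_n-u,v_n-v)\rangle\to 0$, and the $(S_+)$ property closes the proof. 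The chief obstacle is the boundedness step: the variable weights block the standard cancellation, forcing the use of the variable-exponent test function and careful absorption of the $\nabla p,\nabla q$ correction terms, with the strict inequality of $(H_{\alpha,\beta})$ essential for preventing the coupling integral from spoiling the log-corrected lower bound of $(H_1)$.
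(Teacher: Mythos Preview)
Your overall architecture matches the paper's: the same variable-exponent test pair $(u_n/p(\cdot),v_n/q(\cdot))$, the same contradiction strategy for boundedness, and the same $(S_+)$ finish. But two load-bearing steps are not there, and the argument as written does not close.

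\textbf{First gap: the $\nabla p$, $\nabla q$ correction terms.} After cancellation of the principal Dirichlet pieces, the correction is $\int_\Omega p(x)^{-2}u_n|\nabla u_n|^{p(x)-2}\nabla u_n\cdot\nabla p\,dx$ (and its $q$-analogue), of size $C\int|u_n|\,|\nabla u_n|^{p(x)-1}$. You say this is ``absorbed via Young's inequality into a small multiple of the Dirichlet integrals and a bounded $L^{p(\cdot)}$ contribution'', but the Dirichlet integrals have \emph{just been cancelled}; there is nothing left on either side to absorb $\sigma\int|\nabla u_n|^{p(x)}$, and that quantity is a priori unbounded. The paper does not use the plain Young splitting. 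It uses a logarithmically weighted one,
\[
|u_n|\,|\nabla u_n|^{p(x)-1}\le \sigma\,\frac{|\nabla u_n|^{p(x)}}{\ln(e+|u_n|)}+C(\sigma)\,|u_n|^{p(x)}[\ln(e+|u_n|)]^{p(x)-1},
\]
and then introduces a \emph{second} test function $u_n/\ln(e+|u_n|)$ (likewise $v_n/\ln(e+|v_n|)$) to obtain
\[
\int_\Omega\frac{|\nabla u_n|^{p(x)}}{\ln(e+|u_n|)}\,dx\ \sim\ \int_\Omega\frac{F_u(x,u_n,v_n)u_n}{\ln(e+|u_n|)}\,dx+\text{(coupling)},
\]
which $(H_1)$ bounds by a constant multiple of $\int\bigl[\tfrac{1}{p}F_uu_n+\tfrac{1}{q}F_vv_n-F\bigr]$. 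Only then does the loop close and yield the boundedness of $\int|u_n|^{p(x)}[\ln(e+|u_n|)]^{a(x)-1}$ (with the $C(\sigma)$-term absorbed because $a(\cdot)>p(\cdot)$). Your proposal never invokes the logarithmic test functions, and without them the correction term cannot be disposed of.

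\textbf{Second gap: from the log-integral to the Dirichlet integral.} Even granting the bound on $\int|u_n|^{p(x)}[\ln(e+|u_n|)]^{a(x)-1}$, your sentence ``using the middle inequality of $(H_1)$ together with $(H_0)$ \ldots\ to control $\int(F_uu_n+F_vv_n)$'' does not work: $(H_1)$ only controls $F_uu_n/\ln(e+|u_n|)$, not $F_uu_n$ itself, and $(H_0)$ gives $|F_uu_n|\le C(1+|u_n|^{\gamma(x)})$ with $\gamma(x)>p(x)$, which the log-integral bound does not dominate. The paper's device here is an $\varepsilon$-interpolation: write
\[
|F_uu_n|=|F_uu_n|^{\varepsilon}[\ln(e+|u_n|)]^{1-\varepsilon}\Bigl|\tfrac{F_uu_n}{\ln(e+|u_n|)}\Bigr|^{1-\varepsilon},
\]
use $(H_0)$ on the first factor, the (now established) boundedness of $\int|F_uu_n|/\ln(e+|u_n|)$ on the last, and H\"older to deduce $\int|\nabla u_n|^{p(x)}+\int|\nabla v_n|^{q(x)}\le C(1+\|(u_n,v_n)\|)^{1+\varepsilon}$ for small $\varepsilon>0$. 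Since the left side is at least $\|(u_n,v_n)\|^{\min\{p^-,q^-\}}-C$ with $\min\{p^-,q^-\}>1+\varepsilon$, this contradicts $\|(u_n,v_n)\|\to\infty$. This interpolation is the missing mechanism; neither $(H_0)$ nor $(H_1)$ alone bridges the log-weighted bound to a direct bound on $\int F_uu_n$.
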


\begin{proof}
Let $\{(u_{n},v_{n})\}\subset X$ be a Cerami sequence such that $\varphi
(u_{n},v_{n})\rightarrow c$. From Definition \ref{Cerami}, we know that $
\left\Vert \varphi ^{\prime }(u_{n},v_{n})\right\Vert (1+\left\Vert
(u_{n},v_{n})\right\Vert )\rightarrow 0$ as $n\rightarrow+\infty$. We first
\textit{claim} that to show $\varphi$ satisfies the Cerami condition, it is
sufficient to show that the Cerami sequence $\{(u_{n},v_{n})\}$ is bounded
in $X$. Indeed, suppose $\{(u_{n},v_{n})\}$ is bounded, then $
\{(u_{n},v_{n})\}$ admits a weakly convergent subsequence in $X$. Without
loss of generality, we assume that $(u_{n},v_{n})\rightharpoonup (u,v)$ in $
X $, then $\Psi ^{\prime }(u_{n},v_{n})\rightarrow \Psi ^{\prime }(u,v)$ in $
X^{\ast }$. Since $\varphi ^{\prime }(u_{n},v_{n})=\Phi ^{\prime
}(u_{n},v_{n})-\Psi ^{\prime }(u_{n},v_{n})\rightarrow 0$ in $X^{\ast }$, we
have $\Phi ^{\prime }(u_{n},v_{n})\rightarrow \Psi ^{\prime }(u,v)$ in $
X^{\ast }$. Since $\Phi ^{\prime }$ is a homeomorphism, we have $
(u_{n},v_{n})\rightarrow (u,v)$, hence $\varphi $ satisfies Cerami
condition. Therefore, our claim holds.

Now we show that each Cerami sequence $\{(u_{n},v_{n})\}$ is bounded in $X$.
We argue by contradiction. Suppose not, then up to a subsequence (still
denoted by $\{(u_{n},v_{n})\}$), we have
\begin{equation}  \label{contradiction}
\varphi (u_{n},v_{n})\rightarrow c\text{, }\left\Vert \varphi ^{\prime
}(u_{n},v_{n})\right\Vert (1+\left\Vert (u_{n},v_{n})\right\Vert
)\rightarrow 0\text{, }\left\Vert (u_{n},v_{n})\right\Vert \rightarrow
+\infty .
\end{equation}

Obviously, we have
\begin{eqnarray*}
\left\vert \frac{1}{p(x)}u_{n}\right\vert _{p(\cdot )} &\leq &\frac{1}{p^{-}}
\left\vert u_{n}\right\vert _{p(\cdot )},\left\vert \nabla \frac{1}{p(x)}
u_{n}\right\vert _{p(\cdot )}\leq \frac{1}{p^{-}}\left\vert \nabla
u_{n}\right\vert _{p(\cdot )}+C\left\vert u_{n}\right\vert _{p(\cdot )}, \\
\left\vert \frac{1}{q(x)}v_{n}\right\vert _{q(\cdot )} &\leq &\frac{1}{q^{-}}
\left\vert v_{n}\right\vert _{q(\cdot )},\left\vert \nabla \frac{1}{q(x)}
v_{n}\right\vert _{q(\cdot )}\leq \frac{1}{q^{-}}\left\vert \nabla
v_{n}\right\vert _{q(\cdot )}+C\left\vert v_{n}\right\vert _{q(\cdot )}.
\end{eqnarray*}

From the above inequalities, we easily see that $\left\Vert (\frac{1}{p(x)}
u_{n},\frac{1}{q(x)}v_{n})\right\Vert \leq C\left\Vert
(u_{n},v_{n})\right\Vert $. Therefore, we have $(\varphi ^{\prime
}(u_{n},v_{n}),(\frac{1}{p(x)}u_{n},\frac{1}{q(x)}v_{n}))\rightarrow 0$. We
may assume that
\begin{eqnarray*}
c+1 &\geq &\varphi (u_{n},v_{n})-(\varphi ^{\prime }(u_{n},v_{n}),(\frac{1}{
p(x)}u_{n},\frac{1}{q(x)}v_{n})) \\
&=&\int_{\Omega }\left( \frac{1}{p(x)}\left\vert \nabla u_{n}\right\vert
^{p(x)}+\frac{1}{q(x)}\left\vert \nabla v_{n}\right\vert ^{q(x)}\right)
dx-\int_{\Omega }F(x,u_{n},v_{n})dx \\
&&-\{\int_{\Omega }\frac{1}{p(x)}\left\vert \nabla u_{n}\right\vert
^{p(x)}dx-\int_{\Omega }\frac{1}{p(x)}F_{u}(x,u_{n},v_{n})u_{n}dx-\int_{
\Omega }\frac{1}{p^{2}(x)}u_{n}\left\vert \nabla u_{n}\right\vert
^{p(x)-2}\nabla u_{n}\nabla pdx\} \\
&&-\{\int_{\Omega }\frac{1}{q(x)}\left\vert \nabla v_{n}\right\vert
^{q(x)}dx-\int_{\Omega }\frac{1}{q(x)}F_{v}(x,u_{n},v_{n})v_{n}dx-\int_{
\Omega }\frac{1}{q^{2}(x)}v_{n}\left\vert \nabla v_{n}\right\vert
^{q(x)-2}\nabla v_{n}\nabla qdx\} \\
&&+\int_{\Omega }\lambda (\frac{\alpha (x)}{p(x)}+\frac{\beta (x)}{q(x)}
-1)\left\vert u_{n}\right\vert ^{\alpha (x)}\left\vert v_{n}\right\vert
^{\beta (x)}dx \\
&=&\int_{\Omega }\frac{1}{p^{2}(x)}u_{n}\left\vert \nabla u_{n}\right\vert
^{p(x)-2}\nabla u_{n}\nabla pdx+\int_{\Omega }\frac{1}{q^{2}(x)}
v_{n}\left\vert \nabla v_{n}\right\vert ^{q(x)-2}\nabla v_{n}\nabla qdx \\
&&+\int_{\Omega }\{\frac{1}{p(x)}F_{u}(x,u_{n},v_{n})u_{n}+\frac{1}{q(x)}
F_{v}(x,u_{n},v_{n})v_{n}dx-F(x,u_{n},v_{n})\}dx \\
&&+\int_{\Omega }\lambda (\frac{\alpha (x)}{p(x)}+\frac{\beta (x)}{q(x)}
-1)\left\vert u_{n}\right\vert ^{\alpha (x)}\left\vert v_{n}\right\vert
^{\beta (x)}dx.
\end{eqnarray*}

Hence, there holds
\begin{eqnarray}
&&\int_{\Omega }\{\frac{1}{p(x)}F_{u}(x,u_{n},v_{n})u_{n}+\frac{1}{q(x)}
F_{v}(x,u_{n},v_{n})v_{n}dx-F(x,u_{n},v_{n})\}dx  \notag \\
&\leq &C_{1}(\int_{\Omega }\left\vert u_{n}\right\vert \left\vert \nabla
u_{n}\right\vert ^{p(x)-1}dx+\int_{\Omega }\left\vert v_{n}\right\vert
\left\vert \nabla v_{n}\right\vert ^{q(x)-1}dx+1)  \notag \\
&&+\int_{\Omega }\lambda (1-\frac{\alpha (x)}{p(x)}-\frac{\beta (x)}{q(x)}
)\left\vert u_{n}\right\vert ^{\alpha (x)}\left\vert v_{n}\right\vert
^{\beta (x)}dx  \notag \\
&\leq &\sigma \int_{\Omega }\frac{\left\vert \nabla u_{n}\right\vert ^{p(x)}
}{\ln (e+\left\vert u_{n}\right\vert )}+\frac{\left\vert \nabla
v_{n}\right\vert ^{q(x)}}{\ln (e+\left\vert v_{n}\right\vert )}
dx+\int_{\Omega }\lambda (1-\frac{\alpha (x)}{p(x)}-\frac{\beta (x)}{q(x)}
)\left\vert u_{n}\right\vert ^{\alpha (x)}\left\vert v_{n}\right\vert
^{\beta (x)}dx  \notag \\
&&+C(\sigma )\int_{\Omega }\left\vert u_{n}\right\vert ^{p(x)}[\ln
(e+\left\vert u_{n}\right\vert )]^{p(x)-1}+\left\vert v_{n}\right\vert
^{q(x)}[\ln (e+\left\vert v_{n}\right\vert )]^{q(x)-1}dx+C_{1},  \label{a1}
\end{eqnarray}
where $\sigma>0$ is a sufficiently small constant.

Note that $\frac{u_{n}}{\ln (e+\left\vert u_{n}\right\vert )}\in
W_{0}^{1,p(\cdot )}(\Omega )$, and $\left\Vert \frac{u_{n}}{\ln
(e+\left\vert u_{n}\right\vert )}\right\Vert _{p(\cdot )}\leq
C_{2}\left\Vert u_{n}\right\Vert _{p(\cdot )}$. Choosing $\frac{u_{n}}{\ln
(e+\left\vert u_{n}\right\vert )}$ as a test function, we have
\begin{eqnarray*}
&&\int_{\Omega }F_{u}(x,u_{n},v_{n})\frac{u_{n}}{\ln (e+\left\vert
u_{n}\right\vert )}+\lambda \frac{\alpha (x)}{\ln (e+\left\vert
u_{n}\right\vert )}\left\vert u_{n}\right\vert ^{\alpha (x)}\left\vert
v_{n}\right\vert ^{\beta (x)}dx \\
&=&\int_{\Omega }\left\vert \nabla u_{n}\right\vert ^{p(x)-2}\nabla
u_{n}\nabla \frac{u_{n}}{\ln (e+\left\vert u_{n}\right\vert )}dx+o(1) \\
&=&\int_{\Omega }\frac{\left\vert \nabla u_{n}\right\vert ^{p(x)}}{\ln
(e+\left\vert u_{n}\right\vert )}dx-\int_{\Omega }\frac{\left\vert
u_{n}\right\vert \left\vert \nabla u_{n}\right\vert ^{p(x)}}{(e+\left\vert
u_{n}\right\vert )[\ln (e+\left\vert u_{n}\right\vert )]^{2}}dx+o(1).
\end{eqnarray*}

It is easy to check that $\frac{\left\vert u_{n}\right\vert \left\vert
\nabla u_{n}\right\vert ^{p(x)}}{(e+\left\vert u_{n}\right\vert )[\ln
(e+\left\vert u_{n}\right\vert )]^{2}}\leq \frac{1}{2}\frac{\left\vert
\nabla u_{n}\right\vert ^{p(x)}}{\ln (e+\left\vert u_{n}\right\vert )}$.
Therefore, we have
\begin{eqnarray}
C_{3}\int_{\Omega }\frac{\left\vert \nabla u_{n}\right\vert ^{p(x)}}{\ln
(e+\left\vert u_{n}\right\vert )}dx-C_{4} &\leq &\int_{\Omega }\frac{
F_{u}(x,u_{n},v_{n})u_{n}}{\ln (e+\left\vert u_{n}\right\vert )}+\lambda
\frac{\alpha (x)}{\ln (e+\left\vert u_{n}\right\vert )}\left\vert
u_{n}\right\vert ^{\alpha (x)}\left\vert v_{n}\right\vert ^{\beta (x)}dx
\notag \\
&\leq &C_{5}\int_{\Omega }\frac{\left\vert \nabla u_{n}\right\vert ^{p(x)}}{
\ln (e+\left\vert u_{n}\right\vert )}dx+C_{6}.  \label{a2}
\end{eqnarray}

Similarly, we have
\begin{eqnarray}
C_{7}\int_{\Omega }\frac{\left\vert \nabla v_{n}\right\vert ^{q(x)}}{\ln
(e+\left\vert v_{n}\right\vert )}dx-C_{8} &\leq &\int_{\Omega }\frac{
F_{v}(x,u_{n},v_{n})v_{n}}{\ln (e+\left\vert v_{n}\right\vert )}+\lambda
\frac{\beta (x)}{\ln (e+\left\vert v_{n}\right\vert )}\left\vert
u_{n}\right\vert ^{\alpha (x)}\left\vert v_{n}\right\vert ^{\beta (x)}dx
\notag \\
&\leq &C_{9}\int_{\Omega }\frac{\left\vert \nabla v_{n}\right\vert ^{q(x)}}{
\ln (e+\left\vert v_{n}\right\vert )}dx+C_{10}.  \label{a2a}
\end{eqnarray}
From (\ref{a1}), (\ref{a2}), (\ref{a2a}) and condition $(H_{1})$, we obtain
that
\begin{eqnarray*}
&&\int_{\Omega }\frac{F_{u}(x,u_{n},v_{n})u_{n}}{\ln (e+\left\vert
u_{n}\right\vert )}+\frac{F_{v}(x,u_{n},v_{n})v_{n}}{\ln (e+\left\vert
v_{n}\right\vert )} \\
&&\overset{\text{(H}_{1}\text{)}}{\leq }C_{7}\int_{\Omega }\{\frac{
F_{u}(x,u_{n},v_{n})u_{n}}{p(x)}+\frac{F_{v}(x,u_{n},v_{n})v_{n}}{q(x)}
-F(x,u_{n})\}dx \\
&\leq &C_{7}\{\sigma \int_{\Omega }\frac{\left\vert \nabla u_{n}\right\vert
^{p(x)}}{\ln (e+\left\vert u_{n}\right\vert )}+\frac{\left\vert \nabla
v_{n}\right\vert ^{q(x)}}{\ln (e+\left\vert v_{n}\right\vert )}
dx+\int_{\Omega }\lambda (1-\frac{\alpha (x)}{p(x)}-\frac{\beta (x)}{q(x)}
)\left\vert u_{n}\right\vert ^{\alpha (x)}\left\vert v_{n}\right\vert
^{\beta (x)}dx+C_{8} \\
&&+C(\sigma )\int_{\Omega }\left\vert u_{n}\right\vert ^{p(x)}[\ln
(e+\left\vert u_{n}\right\vert )]^{p(x)-1}+\left\vert v_{n}\right\vert
^{q(x)}[\ln (e+\left\vert v_{n}\right\vert )]^{q(x)-1}dx\} \\
&\leq &C_{7}\sigma \int_{\Omega }\frac{\left\vert \nabla u_{n}\right\vert
^{p(x)}}{\ln (e+\left\vert u_{n}\right\vert )}+\frac{\left\vert \nabla
v_{n}\right\vert ^{q(x)}}{\ln (e+\left\vert v_{n}\right\vert )}
dx+C_{7}\int_{\Omega }\lambda (1-\frac{\alpha (x)}{p(x)}-\frac{\beta (x)}{
q(x)})\left\vert u_{n}\right\vert ^{\alpha (x)}\left\vert v_{n}\right\vert
^{\beta (x)}dx \\
&&+C_{7}C(\sigma )\int_{\Omega }\left\vert u_{n}\right\vert ^{p(x)}[\ln
(e+\left\vert u_{n}\right\vert )]^{p(x)-1}+\left\vert v_{n}\right\vert
^{q(x)}[\ln (e+\left\vert v_{n}\right\vert )]^{q(x)-1}dx+C_{9} \\
&\leq &\frac{1}{2}\int_{\Omega }\frac{F_{u}(x,u_{n},v_{n})u_{n}}{\ln
(e+\left\vert u_{n}\right\vert )}+\frac{F_{v}(x,u_{n},v_{n})v_{n}}{\ln
(e+\left\vert v_{n}\right\vert )}dx+C_{7}\int_{\Omega }\lambda (1-\frac{
\alpha (x)}{p(x)}-\frac{\beta (x)}{q(x)})\left\vert u_{n}\right\vert
^{\alpha }\left\vert v_{n}\right\vert ^{\beta }dx \\
&&+C_{7}C(\sigma )\int_{\Omega }\left\vert u_{n}\right\vert ^{p(x)}[\ln
(e+\left\vert u_{n}\right\vert )]^{p(x)-1}+\left\vert v_{n}\right\vert
^{q(x)}[\ln (e+\left\vert v_{n}\right\vert )]^{q(x)-1}dx+C_{10}.
\end{eqnarray*}

In view of the assumptions $(H_{\alpha ,\beta })$, $(H_{1})$ and the above
inequality, we conclude that
\begin{eqnarray*}
&&\int_{\Omega }\left\vert u_{n}\right\vert ^{p(x)}[\ln (e+\left\vert
u_{n}\right\vert )]^{a(x)-1}+\left\vert v_{n}\right\vert ^{q(x)}[\ln
(e+\left\vert v_{n}\right\vert )]^{b(x)-1}dx \\
&\leq &C_{11}\int_{\Omega }F_{u}(x,u_{n},v_{n})\frac{u_{n}}{\ln
(e+\left\vert u_{n}\right\vert )}+F_{v}(x,u_{n},v_{n})\frac{v_{n}}{\ln
(e+\left\vert v_{n}\right\vert )}dx \\
&\leq &C_{12}\int_{\Omega }\left\vert u_{n}\right\vert ^{p(x)}[\ln
(e+\left\vert u_{n}\right\vert )]^{p(x)-1}+\left\vert v_{n}\right\vert
^{q(x)}[\ln (e+\left\vert v_{n}\right\vert )]^{q(x)-1}dx+C_{12}.
\end{eqnarray*}

Noticing that $a(\cdot )>p(\cdot )$ and $b(\cdot )>q(\cdot )$ on $\overline{
\Omega }$, we can conclude that
\begin{equation*}
\left\{ \int_{\Omega }\left\vert u_{n}\right\vert ^{p(x)}[\ln (e+\left\vert
u_{n}\right\vert )]^{a(x)-1}+\left\vert v_{n}\right\vert ^{q(x)}[\ln
(e+\left\vert v_{n}\right\vert )]^{b(x)-1}dx\right\}
\end{equation*}
is bounded, which further yields that
\begin{equation*}
\left\{ \int_{\Omega }F_{u}(x,u_{n},v_{n})\frac{u_{n}}{\ln (e+\left\vert
u_{n}\right\vert )}+F_{v}(x,u_{n},v_{n})\frac{v_{n}}{\ln (e+\left\vert
v_{n}\right\vert )}dx\right\}
\end{equation*}
and
\begin{equation*}
\left\{ \int_{\Omega }\lambda (\alpha (x)+\beta (x))\left\vert
u_{n}\right\vert ^{\alpha (x)}\left\vert v_{n}\right\vert ^{\beta
(x)}dx\right\}
\end{equation*}
are bounded. Now, it is easy to see that $\left\{ \int_{\Omega }\frac{
\left\vert F_{u}(x,u_{n},v_{n})u_{n}\right\vert }{\ln (e+\left\vert
u_{n}\right\vert )}+\frac{\left\vert F_{v}(x,u_{n},v_{n})v_{n}\right\vert }{
\ln (e+\left\vert v_{n}\right\vert )}dx\right\} $ is bounded.

Let $\varepsilon >0$ satisfy $\varepsilon <\min \{1,p^{-}-1,q^{-}-1,\frac{1}{
p^{\ast +}},\frac{1}{q^{\ast +}},(\frac{p^{\ast }}{\gamma })^{-}-1,(\frac{
q^{\ast }}{\delta })^{-}-1\}$. Since $\left\Vert \varphi ^{\prime
}(u_{n},v_{n})\right\Vert \left\Vert (u_{n},v_{n})\right\Vert \rightarrow 0$
, we have
\begin{eqnarray*}
&&\int_{\Omega }\left\vert \nabla u_{n}\right\vert ^{p(x)}+\left\vert \nabla
v_{n}\right\vert ^{q(x)}dx \\
&=&\int_{\Omega
}F_{u}(x,u_{n},v_{n})u_{n}+F_{v}(x,u_{n},v_{n})v_{n}dx+\int_{\Omega }\lambda
(\alpha (x)+\beta (x))\left\vert u_{n}\right\vert ^{\alpha (x)}\left\vert
v_{n}\right\vert ^{\beta (x)}dx+o(1) \\
&=&\int_{\Omega }\left\vert F_{u}(x,u_{n},v_{n})u_{n}\right\vert
^{\varepsilon }[\ln (e+\left\vert u_{n}\right\vert )]^{1-\varepsilon
}\left\vert F_{u}(x,u_{n},v_{n})\frac{u_{n}}{\ln (e+\left\vert
u_{n}\right\vert )}\right\vert ^{1-\varepsilon }dx \\
&&+\int_{\Omega }\left\vert F_{v}(x,u_{n},v_{n})v_{n}\right\vert
^{\varepsilon }[\ln (e+\left\vert v_{n}\right\vert )]^{1-\varepsilon
}\left\vert F_{v}(x,u_{n},v_{n})\frac{v_{n}}{\ln (e+\left\vert
v_{n}\right\vert )}\right\vert ^{1-\varepsilon }dx+C \\
&\leq &C_{9}(1+\left\Vert u_{n}\right\Vert )^{1+\varepsilon }\int_{\Omega }
\left[ \frac{[\left\vert F_{u}(x,u_{n},v_{n})u_{n}\right\vert
]^{1+\varepsilon }+C_{10}}{(1+\left\Vert (u_{n},v_{n})\right\Vert )^{\frac{
1+\varepsilon }{\varepsilon }}}\right] ^{\varepsilon }\left[ \frac{
\left\vert F_{u}(x,u_{n},v_{n})u_{n}\right\vert }{\ln (e+\left\vert
u_{n}\right\vert )}\right] ^{1-\varepsilon }dx \\
&&+C_{9}(1+\left\Vert v_{n}\right\Vert )^{1+\varepsilon }\int_{\Omega }\left[
\frac{[\left\vert F_{v}(x,u_{n},v_{n})v_{n}\right\vert ]^{1+\varepsilon
}+C_{10}}{(1+\left\Vert (u_{n},v_{n})\right\Vert )^{\frac{1+\varepsilon }{
\varepsilon }}}\right] ^{\varepsilon }\left[ \frac{\left\vert
F_{v}(x,u_{n},v_{n})v_{n}\right\vert }{\ln (e+\left\vert v_{n}\right\vert )}
\right] ^{1-\varepsilon }dx+C \\
&\leq &C_{11}(1+\left\Vert (u_{n},v_{n})\right\Vert )^{1+\varepsilon
}+C_{12}.
\end{eqnarray*}

The above inequality contradicts with \eqref{contradiction}. Therefore, we
can conclude that $\{(u_{n},v_{n})\}$ is bounded, and the proof of Lemma 3.3
is complete.
\end{proof}

Now we are in a position to give a proof of Theorem 1.1.

Denote $F^{++}(x,u,v)=F(x,S(u),S(v))$, where $S(t)=\max \{0,t\}$. For any $
(u,v)\in X$, we say $(u,v)$ belong to the first, the second, the third or
the fourth quadrant of $X$, if $u\geq 0$ and $v\geq 0$, $u\leq 0$ and $v\geq
0$, $u\leq 0$ and $v\leq 0$, $u\geq 0$ and $v\leq 0$, respectively.\newline

\textbf{Proof of Theorem 1.1}.

It is easy to check that $F^{++}(x,s,t)\in C^{1}(\overline{\Omega }\times
\mathbb{R}^{2}, \mathbb{R})$, and
\begin{equation*}
F_{u}^{++}(x,u,v)=F_{u}(x,S(u),S(v)),F_{v}^{++}(x,u,v)=F_{v}(x,S(u),S(v)).
\end{equation*}

Let's consider the following auxiliary problem
\begin{equation*}
\text{$(P^{++})$ } \left\{
\begin{array}{l}
-div(\left\vert \nabla u\right\vert ^{p(x)-2}\nabla u)=\lambda \alpha
(x)\left\vert S(u)\right\vert ^{\alpha (x)-2}S(u)\left\vert S(v)\right\vert
^{\beta (x)}+F_{u}^{++}(x,u,v)\text{ in }\Omega , \\
-div(\left\vert \nabla v\right\vert ^{q(x)-2}\nabla v)=\lambda \beta
(x)\left\vert S(u)\right\vert ^{\alpha (x)}\left\vert S(v)\right\vert
^{\beta (x)-2}S(v)+F_{v}^{++}(x,u,v)\text{ in }\Omega , \\
u=0=v\text{ }\left. {}\right. \text{on }\partial \Omega.
\end{array}
\right.
\end{equation*}

The corresponding functional of problem $(P^{++})$ is
\begin{equation*}
\varphi ^{++}(u,v)=\Phi (u,v)-\Psi ^{++}(u,v),\forall (u,v)\in X,
\end{equation*}
where
\begin{equation*}
\Psi ^{++}(u,v)=\int_{\Omega }\lambda \left\vert S(u)\right\vert ^{\alpha
(x)}\left\vert S(v)\right\vert ^{\beta (x)}+F(x,S(u),S(v))dx,\forall
(u,v)\in X.
\end{equation*}

Let $\sigma >0$ be small enough such that $\sigma \leq \frac{1}{4}\min
\{\lambda _{p(\cdot )},\lambda _{q(\cdot )}\}$. Such a $\sigma $ exits, as $
\lambda _{p(\cdot )}>0$ and $\lambda _{q(\cdot )}>0$ due to Proposition 2.6.
By the assumptions $(H_{0})$ and $(H_{2})$, we have
\begin{equation*}
F(x,u,v)\leq \sigma (\frac{1}{p(x)}\left\vert u\right\vert ^{p(x)}+\frac{1}{
q(x)}\left\vert v\right\vert ^{q(x)})+C(\sigma )(\left\vert u\right\vert
^{\gamma (x)}+\left\vert v\right\vert ^{\delta (x)})\text{, }\forall
(x,t)\in \Omega \times\mathbb{R}.
\end{equation*}

As noticed above, $\lambda _{p(\cdot )},\lambda _{q(\cdot )}>0$ and we have
also by the choice of $\sigma $ that
\begin{eqnarray*}
\int_{\Omega }\frac{1}{p(x)}\left\vert \nabla u\right\vert ^{p(x)}dx-\sigma
\int_{\Omega }\frac{1}{p(x)}\left\vert u\right\vert ^{p(x)}dx &\geq &\frac{3
}{4}\int_{\Omega }\frac{1}{p(x)}\left\vert \nabla u\right\vert ^{p(x)}, \\
\int_{\Omega }\frac{1}{q(x)}\left\vert \nabla v\right\vert ^{q(x)}dx-\sigma
\int_{\Omega }\frac{1}{q(x)}\left\vert v\right\vert ^{q(x)}dx &\geq &\frac{3
}{4}\int_{\Omega }\frac{1}{q(x)}\left\vert \nabla v\right\vert ^{q(x)}.
\end{eqnarray*}

Next, we shall use spatial decomposition technique. We divide the underlying
domain $\Omega $ into disjoint subsets $\Omega _{1},\cdots ,\Omega _{n_{0}}$
such that
\begin{eqnarray*}
\underset{x\in \overline{\Omega _{j}}}{\min }p^{\ast }(x) &>&\underset{x\in
\overline{\Omega _{j}}}{\max }\gamma (x)\geq \underset{x\in \overline{\Omega
_{j}}}{\min }\gamma (x)>\underset{x\in \overline{\Omega _{j}}}{\max }
p(x),j=1,\cdots ,n_{0}, \\
\underset{x\in \overline{\Omega _{j}}}{\min }q^{\ast }(x) &>&\underset{x\in
\overline{\Omega _{j}}}{\max }\delta (x)\geq \underset{x\in \overline{\Omega
_{j}}}{\min }\delta (x)>\underset{x\in \overline{\Omega _{j}}}{\max }
q(x),j=1,\cdots ,n_{0}.
\end{eqnarray*}

In the following, we denote
\begin{equation*}
f_{j}^{-}=\underset{x\in \overline{\Omega _{j}}}{\min }f(x)\text{, }
f_{j}^{+}=\underset{x\in \overline{\Omega _{j}}}{\max }f(x)\text{, }
j=1,\cdots ,n_{0}\text{, }\forall f\in C(\overline{\Omega }),
\end{equation*}
and
\begin{equation*}
\Phi _{\Omega _{j}}(u,v)=\int_{\Omega _{j}}\frac{1}{p(x)}\left\vert \nabla
u\right\vert ^{p(x)}dx+\int_{\Omega _{j}}\frac{1}{q(x)}\left\vert \nabla
v\right\vert ^{q(x)}dx,\forall (u,v)\in X.
\end{equation*}

Denote
\begin{equation*}
\epsilon =\underset{1\leq i\leq n_{0}}{\min }\{\underset{\Omega _{i}}{\inf }
\gamma (x)-\underset{\Omega _{i}}{\sup }p(x),\underset{\Omega _{i}}{\inf }
\delta (x)-\underset{\Omega _{i}}{\sup }q(x)\}.
\end{equation*}

Denote also $\left\Vert u\right\Vert _{p(\cdot ),\Omega _{i}}$ the norm of $
u $ on $\Omega _{i}$, i.e.
\begin{equation*}
\int_{\Omega _{i}}\frac{1}{p(x)}\left\vert \nabla \frac{u}{\left\Vert
u\right\Vert _{\Omega _{i}}}\right\vert ^{p(x)}dx=1.
\end{equation*}

It is easy to see that $\left\Vert u\right\Vert _{p(\cdot ),\Omega _{i}}\leq
C\left\Vert u\right\Vert _{p(\cdot )}$, and there exist $\xi _{i},\eta
_{i}\in \overline{\Omega _{i}}$ such that
\begin{eqnarray*}
\left\vert u\right\vert _{\gamma (\cdot ),\Omega _{i}}^{\gamma (\xi _{i})}
&=&\int_{\Omega _{i}}\left\vert u\right\vert ^{\gamma (x)}dx, \\
\left\Vert u\right\Vert _{p(\cdot ),\Omega _{i}}^{p(\eta _{i})}
&=&\int_{\Omega _{i}}\frac{1}{p(x)}\left\vert \nabla u\right\vert ^{p(x)}dx.
\end{eqnarray*}

When $\left\Vert u\right\Vert _{p(\cdot )}$ is small enough, we have
\begin{eqnarray*}
C(\sigma )\int_{\Omega }\left\vert u\right\vert ^{\gamma (x)}dx &=&C(\sigma )
\underset{i=1}{\overset{n_{0}}{\sum }}\int_{\Omega _{i}}\left\vert
u\right\vert ^{\gamma (x)}dx \\
&=&C(\sigma )\underset{i=1}{\overset{n_{0}}{\sum }}\left\vert u\right\vert
_{\gamma (\cdot ),\Omega _{i}}^{\gamma (\xi _{i})}\text{ (where }\xi _{i}\in
\overline{\Omega _{i}}\text{)} \\
&\leq &C_{1}\underset{i=1}{\overset{n_{0}}{\sum }}\left\Vert u\right\Vert
_{p(\cdot ),\Omega _{i}}^{\gamma (\xi _{i})}\text{ (by Proposition 2.5)} \\
&\leq &C_{2}\left\Vert u\right\Vert _{p(\cdot )}^{\epsilon }\underset{i=1}{
\overset{n_{0}}{\sum }}\left\Vert u\right\Vert _{p(\cdot ),\Omega
_{i}}^{p(\eta _{i})}\text{ (where }\eta _{i}\in \overline{\Omega _{i}}\text{)
} \\
&=&C_{2}\left\Vert u\right\Vert _{p(\cdot )}^{\epsilon }\underset{i=1}{
\overset{n_{0}}{\sum }}\int_{\Omega _{i}}\frac{1}{p(x)}\left\vert \nabla
u\right\vert ^{p(x)}dx \\
&=&C_{2}\left\Vert u\right\Vert _{p(\cdot )}^{\epsilon }\int_{\Omega }\frac{1
}{p(x)}\left\vert \nabla u\right\vert ^{p(x)}dx \\
&\leq &\frac{1}{4}\int_{\Omega }\frac{1}{p(x)}\left\vert \nabla u\right\vert
^{p(x)}dx.
\end{eqnarray*}

Similarly, when $\left\Vert v\right\Vert _{q(\cdot )}$ is small enough, we
have
\begin{equation*}
C(\sigma )\int_{\Omega }\left\vert v\right\vert ^{\delta (x)}dx\leq \frac{1}{
4}\int_{\Omega }\frac{1}{q(x)}\left\vert \nabla v\right\vert ^{q(x)}dx.
\end{equation*}

Thus, when $\left\Vert (u,v)\right\Vert $ is small enough, we have
\begin{equation}\label{aa.1}
\Phi (u,v)-\int_{\Omega }F(x,u,v)dx\geq \frac{1}{2}\int_{\Omega }\frac{1}{
p(x)}\left\vert \nabla u\right\vert ^{p(x)}dx+\frac{1}{2}\int_{\Omega }\frac{
1}{q(x)}\left\vert \nabla v\right\vert ^{q(x)}dx.
\end{equation}

When $\lambda $ is small enough, for any $(u,v)\in X$ with small enough
norm, we have
\begin{eqnarray*}
\varphi ^{++}(u,v) &=&\Phi (u,v)-\Psi ^{++}(u,v) \\
&\geq &\frac{1}{2}\int_{\Omega }\frac{1}{p(x)}\left\vert \nabla u\right\vert
^{p(x)}dx+\frac{1}{2}\int_{\Omega }\frac{1}{q(x)}\left\vert \nabla
v\right\vert ^{q(x)}dx-\int_{\Omega }\lambda \left\vert u\right\vert
^{\alpha (x)}\left\vert v\right\vert ^{\beta (x)}dx, \\
&\geq &\frac{1}{4}\int_{\Omega }\frac{1}{p(x)}\left\vert \nabla u\right\vert
^{p(x)}dx+\frac{1}{4}\int_{\Omega }\frac{1}{q(x)}\left\vert \nabla
v\right\vert ^{q(x)}dx.
\end{eqnarray*}

Therefore, when $\lambda $ is small enough, there exist $r>0$ and $
\varepsilon >0$ such that $\varphi (u,v)\geq \varepsilon >0$ for every $
(u,v)\in X$ and $\left\Vert (u,v)\right\Vert =r$.

Let $\Omega _{0}\subset \Omega$ be an open ball with radius $\varepsilon$.
Notice that $(H_{\alpha ,\beta })$ holds. Let $\varepsilon >0$ be small
enough such that
\begin{eqnarray*}
p_{\overline{\Omega }_{0}}^{-} &:&=\min \{p(x)\mid \overline{\Omega }
_{0}\}>\alpha _{\overline{\Omega }_{0}}^{+}:=\max \{\alpha (x)\mid \overline{
\Omega }_{0}\}, \\
q_{\overline{\Omega }_{0}}^{-} &:&=\min \{q(x)\mid \overline{\Omega }
_{0}\}>\beta _{\overline{\Omega }_{0}}^{+}:=\max \{\beta (x)\mid \overline{
\Omega }_{0}\},
\end{eqnarray*}
and
\begin{equation*}
\frac{\alpha _{\overline{\Omega }_{0}}^{+}}{p_{\overline{\Omega }_{0}}^{-}}+
\frac{\beta _{\overline{\Omega }_{0}}^{+}}{q_{\overline{\Omega }_{0}}^{-}}<1.
\end{equation*}

Now we pick two functions $u_{0},v_{0}\in C_{0}^{2}(\overline{\Omega }_{0})$
that are positive in $\Omega _{0}$. From $(H_{\alpha ,\beta })$, it is easy
to see that
\begin{eqnarray*}
&&\varphi ^{++}(t^{\frac{1}{p_{\overline{\Omega }_{0}}^{-}}}u_{0},t^{\frac{1
}{q_{\overline{\Omega }_{0}}^{-}}}v_{0}) \\
&=&\Phi (t^{\frac{1}{p_{\overline{\Omega }_{0}}^{-}}}u_{0},t^{\frac{1}{q_{
\overline{\Omega }_{0}}^{-}}}v_{0})-\Psi ^{++}(t^{\frac{1}{p_{\overline{
\Omega }_{0}}^{-}}}u_{0},t^{\frac{1}{q_{\overline{\Omega }_{0}}^{-}}}v_{0})
\\
&\leq &\Phi (t^{\frac{1}{p_{\overline{\Omega }_{0}}^{-}}}u_{0},t^{\frac{1}{
q_{\overline{\Omega }_{0}}^{-}}}v_{0})+2\int_{\Omega }|t^{\frac{1}{p_{
\overline{\Omega }_{0}}^{-}}}u_{0}|^{p(x)}+|t^{\frac{1}{q_{\overline{\Omega }
_{0}}^{-}}}v_{0}|^{q(x)}dx \\
&&-\lambda \int_{\Omega }|t^{\frac{1}{p_{\overline{\Omega }_{0}}^{-}}
}u_{0}|^{\alpha (x)}\cdot |t^{\frac{1}{q_{\overline{\Omega }_{0}}^{-}}
}v_{0}|^{\beta (x)}dx \\
&\leq &t\Phi (u_{0},v_{0})+2t\int_{\Omega
}|u_{0}|^{p(x)}+|v_{0}|^{q(x)}dx-\lambda t^{\frac{\alpha _{\overline{\Omega }
_{0}}^{+}}{p_{\overline{\Omega }_{0}}^{-}}+\frac{\beta _{\overline{\Omega }
_{0}}^{+}}{q_{\overline{\Omega }_{0}}^{-}}}\int_{\Omega }|u_{0}|^{\alpha
(x)}|v_{0}|^{\beta (x)}dx<0\text{ as }t\rightarrow 0^{+}.
\end{eqnarray*}

Thus, $\varphi ^{++}(u,v)$ has at least one nontrivial critical point $
(u_{1}^{\ast },v_{1}^{\ast })$ with $\varphi ^{++}(u_{1}^{\ast },v_{1}^{\ast
})<0$.

From assumption $(H_{3})$, it is easy to see that $(u_{1}^{\ast
},v_{1}^{\ast })$ lies in the first quadrant of $X$. It is easy to see that $
S(-u_{1}^{\ast })\in W_{0}^{1,p(\cdot )}(\Omega )$. Choosing $S(-u_{1}^{\ast
})$ as a test function, we have
\begin{eqnarray*}
&&\int_{\Omega }\left\vert \nabla u_{1}^{\ast }\right\vert ^{p(x)-2}\nabla
u_{1}^{\ast }S(-u_{1}^{\ast })dx \\
&=&\int_{\Omega }[\lambda \alpha (x)\left\vert S(u_{1}^{\ast })\right\vert
^{\alpha (x)-2}S(u_{1}^{\ast })\left\vert S(v_{1}^{\ast })\right\vert
^{\beta (x)}+F_{u}(x,S(u_{1}^{\ast }),S(v_{1}^{\ast }))]S(-u_{1}^{\ast })dx
\\
&=&\int_{\Omega }F_{u}(x,S(u_{1}^{\ast }),S(v_{1}^{\ast }))S(-u_{1}^{\ast
})dx\overset{(H_{3})}{=}0.
\end{eqnarray*}

Thus, $u_{1}^{\ast }\geq 0$. Similarly, we have $v_{1}^{\ast }\geq 0$.
Therefore, $(u_{1}^{\ast },v_{1}^{\ast })$ is a nontrivial solution with
constant sign of $(P)$ and such that $\varphi (u_{1}^{\ast },v_{1}^{\ast
})<0 $. By the former discussion and (\ref{aa.1}), we can see that $
u_{1}^{\ast }$ and $v_{1}^{\ast }$ are both nontrivial. Similarly, we can
see that $(P)$ has a nontrivial $(u_{i}^{\ast },v_{i}^{\ast })$ with
constant sign in the $i $-th quadrant of $X$, such that $\varphi
(u_{i}^{\ast },v_{i}^{\ast })<0$, $i=2,3,4$. Thus $(P)$ has at least four
nontrivial solutions with constant sign. By now, we finished the proof of
Theorem 1.1.

\textbf{Proof of Theorem 1.2}.

According to the proof of Theorem 1.1, when $\lambda $ is small enough,
there exist $r>0$ and $\varepsilon >0$ such that $\varphi ^{++}(u,v)\geq
\varepsilon >0$ for every $(u,v)\in X$ and $\left\Vert (u,v)\right\Vert =r$.

From $(H_{1})$, for any $(x,u,v)\in \overline{\Omega }\times \mathbb{R}
\times \mathbb{R} $, we have
\begin{equation*}
F(x,u,v)\geq C_{1}\left\vert u\right\vert ^{p(x)}[\ln (1+\left\vert
u\right\vert )]^{a(x)}+C_{1}\left\vert v\right\vert ^{q(x)}[\ln
(1+\left\vert v\right\vert )]^{b(x)}-c_{2}.
\end{equation*}

We may assume that there exists two different points $x_{1},x_{2}\in \Omega $
such that $\nabla p(x_{1})\neq 0,\nabla q(x_{2})\neq 0$.

Now we define $h_{1}\in C_{0}(\overline{B(x_{1},\varepsilon )})$ $h_{2}\in
C_{0}(\overline{B(x_{2},\varepsilon )})$ as follows:
\begin{equation*}
h_{1}(x)=\left\{
\begin{array}{cc}
0, & \left\vert x-x_{1}\right\vert \geq \varepsilon \\
\varepsilon -\left\vert x-x_{1}\right\vert , & \left\vert x-x_{1}\right\vert
<\varepsilon
\end{array}
\right. ,
\end{equation*}
\begin{equation*}
h_{2}(x)=\left\{
\begin{array}{cc}
0, & \left\vert x-x_{2}\right\vert \geq \varepsilon \\
\varepsilon -\left\vert x-x_{2}\right\vert , & \left\vert x-x_{2}\right\vert
<\varepsilon
\end{array}
\right. .
\end{equation*}

From Lemma 2.10, we may let $\varepsilon >0$ be small enough such that $
\varepsilon <\frac{1}{2}\left\vert x_{2}-x_{1}\right\vert $ and
\begin{eqnarray*}
\int_{\Omega }\frac{1}{p(x)}\left\vert \nabla th_{1}\right\vert
^{p(x)}-\int_{\Omega }C_{1}\left\vert th_{1}\right\vert ^{p(x)}[\ln
(1+\left\vert th_{1}\right\vert )]^{a(x)}dx &\rightarrow &-\infty \text{ as }
t\rightarrow +\infty \\
\int_{\Omega }\frac{1}{q(x)}\left\vert \nabla th_{2}\right\vert
^{q(x)}-\int_{\Omega }C_{1}\left\vert th_{2}\right\vert ^{q(x)}[\ln
(1+\left\vert th_{2}\right\vert )]^{b(x)}dx &\rightarrow &-\infty \text{ as }
t\rightarrow +\infty .
\end{eqnarray*}
which imply that $\varphi ^{++}(th_{1},th_{2})\rightarrow -\infty $ $($as $
t\rightarrow +\infty )$. Since $\varphi ^{++}\left( 0,0\right) =0,$ $\varphi
^{++}$ satisfies the conditions of the Mountain Pass Lemma. From Lemma 3.3,
we know that $\varphi ^{++}$ satisfies Cerami condition. Therefore, we
conclude that $\varphi ^{++}$ admits at least one nontrivial critical point $
(u_{1},v_{1})$ with $\varphi ^{++}(u_{1},v_{1})>0$. From assumption $(H_{3})$
, we can easily see that $(u_{1},v_{1})$ lies in the first quadrant of $X$.
Thus, $(u_{1},v_{1})$ is a nontrivial solution with constant sign to the
problem $(P)$ in the first quadrant of $X$ satisfying $\varphi
(u_{1},v_{1})>0$.

Similarly, we can see that $(P)$ has a nontrivial solution $(u_{2},v_{2})$
in the third quadrant in $X$, which satisfy $\varphi (u_{2},v_{2})>0$, and (
$u_{1},v_{1}$), ($u_{2},v_{2}$) are all nontrivial. From Theorem 1.1, $(P)$
has nontrivial solutions with constant sign $(u_{i}^{\ast },v_{i}^{\ast })$
in the $i$-th quadrant of $X$ ($i=1,2,3,4$), which satisfies $\varphi
(u_{i}^{\ast },v_{i}^{\ast })<0$. Thus, $(P)$ has at least six nontrivial
solutions with constant sign. By far, we have finished the proof of Theorem
1.2.

Now we proceed to prove Theorem 1.3. For this purpose, we need to do some
preparations. Noticing that $X$ is a reflexive and separable Banach space
(see \cite{22}, Section 17, Theorem 2-3), then there are $\left\{
e_{j}\right\} \subset X$ and $\left\{ e_{j}^{\ast }\right\} \subset X^{\ast
} $ such that
\begin{equation*}
X=\overline{span}\{e_{j}\text{, }j=1,2,\cdots \}\text{, }X^{\ast }=\overline{
span}^{W^{\ast }}\{e_{j}^{\ast }\text{, }j=1,2,\cdots \},
\end{equation*}
and
\begin{equation*}
<e_{j}^{\ast },e_{j}>=\left\{
\begin{array}{c}
1,i=j, \\
0,i\neq j.
\end{array}
\right.
\end{equation*}

For convenience, we write $X_{j}=span\{e_{j}\}$, $Y_{k}=\overset{k}{\underset
{j=1}{\oplus }}X_{j}$, $Z_{k}=\overline{\overset{\infty }{\underset{j=k}{
\oplus }}X_{j}}$.\newline

\begin{lemma}
If $\gamma ,\delta \in C_{+}\left( \overline{\Omega }\right) $, $\gamma
(x)<p^{\ast }(x)$ and $\delta (x)<q^{\ast }(x)$ for any $x\in \overline{
\Omega }$, denote
\begin{equation*}
\beta _{k}=\sup \left\{ \left\vert u\right\vert _{\gamma (\cdot
)}+\left\vert u\right\vert _{\delta (\cdot )}\left\vert \left\Vert
(u,v)\right\Vert =1,(u,v)\in Z_{k}\right. \right\} ,
\end{equation*}
then $\underset{k\rightarrow \infty }{\lim }\beta _{k}=0$.
\end{lemma}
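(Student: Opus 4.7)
The plan is to use the standard Fountain-type argument that combines the compact Sobolev embeddings from Proposition \ref{2.5}(ii) with the fact that the filtration $\{Z_k\}$ captures exactly the ``tail'' of the basis expansion, forcing weak limits to vanish.

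First I would observe that $\beta_k$ is monotone non-increasing in $k$ (since $Z_{k+1}\subset Z_k$) and non-negative, so the limit $\beta:=\lim_{k\to\infty}\beta_k\ge 0$ exists. It then suffices to rule out $\beta>0$. For each $k$ I would pick a near-maximizer $(u_k,v_k)\in Z_k$ with $\|(u_k,v_k)\|=1$ and
\begin{equation*}
|u_k|_{\gamma(\cdot)}+|v_k|_{\delta(\cdot)}\ge \beta_k-\tfrac{1}{k}.
\end{equation*}
Since $\|(u_k,v_k)\|=1$ and $X$ is reflexive (Proposition \ref{2.5}(i)), a subsequence satisfies $(u_k,v_k)\rightharpoonup (u_0,v_0)$ in $X$.

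Next I would identify the weak limit. For any fixed index $j$, whenever $k>j$ the element $(u_k,v_k)$ lies in $Z_k\subset \overline{\bigoplus_{i\ge k} X_i}$, so by the biorthogonality $\langle e_j^\ast,(u_k,v_k)\rangle=0$. Passing to the weak limit gives $\langle e_j^\ast,(u_0,v_0)\rangle=0$ for every $j$, and since $\{e_j^\ast\}$ separates points of $X$ (its span is weak-$\ast$ dense in $X^\ast$), we conclude $(u_0,v_0)=(0,0)$. Now the hypotheses $\gamma(x)<p^\ast(x)$ and $\delta(x)<q^\ast(x)$ on $\overline{\Omega}$ let me apply Proposition \ref{2.5}(ii): the embeddings $W_0^{1,p(\cdot)}(\Omega)\hookrightarrow L^{\gamma(\cdot)}(\Omega)$ and $W_0^{1,q(\cdot)}(\Omega)\hookrightarrow L^{\delta(\cdot)}(\Omega)$ are compact. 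Hence $u_k\to 0$ in $L^{\gamma(\cdot)}$ and $v_k\to 0$ in $L^{\delta(\cdot)}$, so $|u_k|_{\gamma(\cdot)}+|v_k|_{\delta(\cdot)}\to 0$. Combined with $|u_k|_{\gamma(\cdot)}+|v_k|_{\delta(\cdot)}\ge \beta_k-1/k$, this forces $\beta=0$, which is the desired conclusion.

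The only nontrivial step is the identification of the weak limit as zero, since it relies on the structure of the filtration $\{Z_k\}$ coming from a Schauder-type decomposition of the product space $X$; once that is in hand, the remainder is a direct application of the compact embedding. I would also remark that the statement as written should be read with the convention $u\in W_0^{1,p(\cdot)}(\Omega)$, $v\in W_0^{1,q(\cdot)}(\Omega)$, so that $|u|_{\gamma(\cdot)}$ and $|v|_{\delta(\cdot)}$ are each controlled by the appropriate compact embedding; the argument works equally well if one prefers to decompose each factor of $X$ separately and sum the two resulting estimates.
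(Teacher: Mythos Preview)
Your proof is correct and follows essentially the same route as the paper's own argument: monotonicity of $\beta_k$, selection of near-maximizers on the unit sphere of $Z_k$, weak convergence by reflexivity, identification of the weak limit as zero via the biorthogonal system, and then the compact embeddings of Proposition \ref{2.5}(ii) to force $|u_k|_{\gamma(\cdot)}+|v_k|_{\delta(\cdot)}\to 0$. Your write-up is in fact slightly more explicit than the paper's (you spell out why $\langle e_j^\ast,(u_k,v_k)\rangle=0$ for $k>j$ and why the $e_j^\ast$ separate points), and your closing remark correctly flags the evident typo $|u|_{\delta(\cdot)}$ for $|v|_{\delta(\cdot)}$ in the statement.
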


\begin{proof}
Obviously, $0<\beta _{k+1}\leq \beta _{k}$, so $\beta _{k}\rightarrow \beta
\geq 0$. Let $u_{k}\in Z_{k}$ satisfy
\begin{equation*}
\left\Vert (u_{k},v_{k})\right\Vert =1\text{, }0\leq \beta _{k}-\left\vert
u_{k}\right\vert _{\gamma (\cdot )}-\left\vert v_{k}\right\vert _{\delta
(\cdot )}<\frac{1}{k}.
\end{equation*}
Then there exists a subsequence of $\{(u_{k},v_{k})\}$ (which we still
denote by $(u_{k},v_{k})$) such that $(u_{k},v_{k})\rightharpoonup (u,v)$,
and
\begin{equation*}
<e_{j}^{\ast },(u,v)>=\underset{k\rightarrow \infty }{\lim }\left\langle
e_{j}^{\ast },(u_{k},v_{k})\right\rangle =0\text{, }\forall e_{j}^{\ast },
\end{equation*}
which implies that $(u,v)=(0,0)$, and so $(u_{k},v_{k})\rightharpoonup
(0,0). $ Since the imbedding from $W_{0}^{1,p(\cdot )}\left( \Omega \right) $
to $L^{\gamma (\cdot )}\left( \Omega \right) $ is compact, then $
u_{k}\rightarrow 0$ in $L^{\gamma (\cdot )}\left( \Omega \right) $.
Similarly, we have $v_{k}\rightarrow 0$ in $L^{\delta (\cdot )}\left( \Omega
\right) $. Hence we get $\beta _{k}\rightarrow 0$ as $k\rightarrow \infty $.
Proof of Lemma 3.4 is complete.
\end{proof}

In odder to prove Theorem 1.3, we need the following lemma (see in
particular, \cite[Theorem 4.7]{20a}). For a version of this lemma with the
Palais-Samle condition, the (P.S.)-condition, see \cite[P 221, Theorem 3.6]
{e39}.

\begin{lemma}
\label{fountain} Suppose $\varphi \in C^{1}(X, \mathbb{R})$ is even, and
satisfies the Cerami condition. Let $V^{+}$, $V^{-}\subset X$ be closed
subspaces of $X$ with codim$V^{+}+1=$dim $V^{-}$, and suppose there holds

($1^{0}$) $\varphi (0,0)=0$.

($2^{0}$) $\exists \tau >0,$ $\gamma >0$ such that $\forall (u,v)\in V^{+}:$
$\Vert (u,v)\Vert =\gamma \Rightarrow \varphi (u,v)\geq \tau .$

($3^{0}$) $\exists \rho >0\ $such that $\forall (u,v)\in V^{-}:$ $\Vert
(u,v)\Vert \geq \rho \Rightarrow \varphi (u,v)\leq 0.$

Consider the following set:
\begin{equation*}
\Gamma =\{g\in C^{0}(X,X)\mid g\text{ is odd, }g(u,v)=(u,v)\text{ if }
(u,v)\in V^{-}\text{ and }\Vert (u,v)\Vert \geq \rho \},
\end{equation*}
then

($a$) $\forall \delta >0$, $g\in \Gamma $, $S_{\delta }^{+}\cap g(V^{-})\neq
\varnothing $, here $S_{\delta }^{+}=\{(u,v)\in V^{+}\mid \Vert (u,v)\Vert
=\delta \};$

($b$) the number $\varpi :=\underset{g\in \Gamma }{\inf }\underset{\text{ }
(u,v)\in V^{-}}{\sup }\varphi (g(u,v))\geq \tau >0$ is a critical value for $
\varphi $.
\end{lemma}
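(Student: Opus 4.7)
The plan is to reduce (a) to a Borsuk--Ulam-type intersection result and combine it with a symmetric deformation lemma for (b). For (a), set $m=\dim V^{-}$ and $n=\mathrm{codim}\,V^{+}=m-1$. Pick a closed complement $W$ of $V^{+}$ so that $X=V^{+}\oplus W$ with $\dim W=n$, and denote by $P:X\to V^{+}$, $Q:X\to W$ the corresponding continuous projections. For $\delta>0$ and $g\in\Gamma$, introduce the continuous odd map
\[
\Psi:V^{-}\to W\oplus\mathbb{R},\qquad \Psi(w)=\bigl(Q(g(w)),\,\|P(g(w))\|-\delta\bigr),
\]
whose target has dimension $n+1=m=\dim V^{-}$. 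A zero of $\Psi$ is exactly a point $w\in V^{-}$ with $g(w)\in V^{+}$ and $\|g(w)\|=\delta$, i.e.\ $g(w)\in S_{\delta}^{+}$. For $R>\max\{\rho,\delta\}$ and $w$ on the sphere $\partial B_{R}\cap V^{-}$ the defining condition of $\Gamma$ gives $g(w)=w$, and the oddness of $\Psi$ together with the matched dimensions allows a Borsuk-type degree computation showing that $\deg(\Psi,B_{R}\cap V^{-},0)$ is odd, hence nonzero. Consequently $\Psi$ vanishes somewhere in $B_{R}\cap V^{-}$, which proves (a).

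For the first half of (b), specialise (a) to $\delta=\gamma$: for every $g\in\Gamma$ there exists $w_{g}\in V^{-}$ with $g(w_{g})\in S_{\gamma}^{+}$, and hypothesis $(2^{0})$ yields $\sup_{w\in V^{-}}\varphi(g(w))\geq\varphi(g(w_{g}))\geq\tau$. Taking the infimum over $g\in\Gamma$ gives $\varpi\geq\tau>0$.

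To show $\varpi$ is a critical value, argue by contradiction and assume it is regular. Since $\varphi$ is even and satisfies the Cerami condition, the equivariant quantitative deformation lemma produces, for any sufficiently small $\varepsilon\in(0,\varpi/2)$, an odd homeomorphism $\eta:X\to X$ which is the identity outside $\{\varphi\in[\varpi-2\varepsilon,\varpi+2\varepsilon]\}$ and satisfies $\eta(\{\varphi\leq\varpi+\varepsilon\})\subset\{\varphi\leq\varpi-\varepsilon\}$. Choose $g\in\Gamma$ with $\sup_{V^{-}}\varphi\circ g<\varpi+\varepsilon$; by $(3^{0})$, $\varphi(w)\leq 0<\varpi-2\varepsilon$ whenever $w\in V^{-}$ with $\|w\|\geq\rho$, so $\eta\circ g$ agrees with the identity on that region, and being a composition of odd continuous maps it lies in $\Gamma$. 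But $\sup_{V^{-}}\varphi(\eta\circ g)\leq\varpi-\varepsilon$ contradicts the definition of $\varpi$ as an infimum. The principal obstacle is the Borsuk-type degree computation in (a), where one must carefully verify admissibility of $\Psi$ on the boundary sphere of $V^{-}$ and compute the degree modulo $2$; once (a) is in hand, (b) follows by the now-standard symmetric minimax-plus-deformation scheme.
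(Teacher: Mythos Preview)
The paper does not prove this lemma at all; it is quoted from the references \cite{20a} and \cite{e39} as a known abstract critical point result. So there is no ``paper's proof'' to compare against, and your task is really to supply a self-contained argument.

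Your outline for (b) is the standard symmetric minimax-plus-deformation scheme and is fine. The problem is in (a). You claim that
\[
\Psi(w)=\bigl(Q(g(w)),\ \|P(g(w))\|-\delta\bigr)
\]
is odd and then invoke Borsuk to conclude that its Brouwer degree on $B_R\cap V^{-}$ is odd. But the second component is \emph{even}: since $g$ is odd and $P$ is linear, $\|P(g(-w))\|-\delta=\|{-P(g(w))}\|-\delta=\|P(g(w))\|-\delta$. Hence $\Psi(-w)=(-Q(g(w)),\,\|P(g(w))\|-\delta)\neq -\Psi(w)$, and Borsuk's odd--mapping theorem does not apply. Worse, if you drop oddness and try to compute $\deg(\Psi,B_R\cap V^{-},0)$ directly from the boundary values $\Psi(w)=(Q(w),\|P(w)\|-\delta)$ for $\|w\|=R$, a straight-line homotopy in the second slot to the constant $R-\delta>0$ shows the degree is $0$, so the argument collapses.

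The standard repair is to work one dimension lower. The map $Q\circ g:V^{-}\to W$ \emph{is} odd, with $\dim V^{-}=m$ and $\dim W=m-1$. One then shows, via a Borsuk--Ulam/connectedness argument, that the symmetric zero set $Z=(Q\circ g)^{-1}(0)\cap\overline{B_R^{\,V^{-}}}$ has a connected component joining $0$ to $\partial B_R^{\,V^{-}}$ (equivalently, one uses a genus or parity-of-degree obstruction to rule out separating $0$ from the boundary inside $Z$). On that component $\|g(\cdot)\|$ is continuous, vanishes at $0$, and equals $R>\delta$ at the boundary, so by the intermediate value theorem it hits $\delta$; the corresponding point lies in $S_\delta^{+}\cap g(V^{-})$. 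Once (a) is corrected in this way, your argument for (b) goes through unchanged.
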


\textbf{Proof of Theorem 1.3}.

According to $(H_{\alpha ,\beta })$, $(H_{0})$, $(H_{1})$ and $(H_{4})$, we
know that $\varphi$ is an even functional and satisfies the Cerami
condition. Let $V_{k}^{+}=Z_{k}$, it is a closed linear subspace of $X$ and $
V_{k}^{+}\oplus Y_{k-1}=X$.

We may assume that there exists different points $x_{n},y_{n}\in \Omega $
such that $\nabla p(x_{n})\neq 0,\nabla q(y_{n})\neq 0$. We then define $
h_{n}\in C_{0}(\overline{B(x_{n},\varepsilon _{n})})$ and $h_{n}^{\ast }\in
C_{0}(\overline{B(y_{n},\varepsilon _{n})})$ as follows:
\begin{equation*}
h_{n}(x)=\left\{
\begin{array}{cc}
0, & \left\vert x-x_{n}\right\vert \geq \varepsilon _{n} \\
\varepsilon _{n}-\left\vert x-x_{n}\right\vert , & \left\vert
x-x_{n}\right\vert <\varepsilon _{n}
\end{array}
\right. ,
\end{equation*}
\begin{equation*}
h_{n}^{\ast }(x)=\left\{
\begin{array}{cc}
0, & \left\vert x-y_{n}\right\vert \geq \varepsilon _{n} \\
\varepsilon _{n}-\left\vert x-y_{n}\right\vert , & \left\vert
x-y_{n}\right\vert <\varepsilon _{n}
\end{array}
\right. .
\end{equation*}

Without loss of generality, we may assume that
\begin{equation*}
supp\,h_{i}\cap supp\,h_{i}^{\ast }=\varnothing \text{ for }i=1,2,\cdots
\end{equation*}
and
\begin{equation*}
supp\,h_{i}\cap supp\,h_{j}=\varnothing \text{, }supp\,h_{i}^{\ast }\cap
supp\,h_{j}^{\ast }=\varnothing \text{, }\forall i\neq j.
\end{equation*}

From Lemma 2.9, we may let $\varepsilon _{n}>0$ be small enough such that
\begin{eqnarray*}
\int_{\Omega }\frac{1}{p(x)}\left\vert \nabla t\, h_{n}\right\vert
^{p(x)}-\int_{\Omega }C_{1}\left\vert th_{n}\right\vert ^{p(x)}[\ln
(1+\left\vert t\, h_{n}\right\vert )]^{a(x)}dx &\rightarrow &-\infty \text{ as }
t\rightarrow +\infty , \\
\int_{\Omega }\frac{1}{q(x)}\left\vert \nabla t\,h_{n}^{\ast }\right\vert
^{q(x)}-\int_{\Omega }C_{1}\left\vert t\, h_{n}^{\ast }\right\vert ^{q(x)}[\ln
(1+\left\vert t\, h_{n}^{\ast }\right\vert )]^{b(x)}dx &\rightarrow &-\infty
\text{ as }t\rightarrow +\infty ,
\end{eqnarray*}
which imply that $\varphi (t\, h_{n},t\, h_{n}^{\ast })\rightarrow -\infty $ (as $
t\rightarrow +\infty $).

Set $V_{k}^{-}=span\{(h_{1},h_{1}^{\ast }),\cdots ,(h_{k},h_{k}^{\ast })\}$.
We will prove that there are infinitely many pairs of $V_{k}^{+}$ and $
V_{k}^{-}$, such that $\varphi $ satisfies the conditions of Lemma 3.5 and
that the corresponding critical value $\varpi _{k}:=\underset{g\in \Gamma }{
\inf }\underset{\text{ }(u,v)\in V_{k}^{-}}{\sup }\varphi
(g(u,v))\rightarrow +\infty $ when $k\rightarrow +\infty $, which implies
that there are infinitely many pairs of solutions to the problem $(P)$.

For any $k=1,2,\cdots $, we shall show that there exist $\rho _{k}>\gamma
_{k}>0$ and large enough $k$ such that
\begin{eqnarray*}
(A_{1})\left. {}\right. \text{ }b_{k} &:&=\inf \left\{ \varphi (u,v)\mid
(u,v)\in V_{k}^{+},\left\Vert (u,v)\right\Vert =\gamma _{k}\right\}
\rightarrow +\infty \text{ }(k\rightarrow +\infty ); \\
(A_{2})\left. {}\right. \text{ }a_{k} &:&=\max \left\{ \varphi (u,v)\mid
(u,v)\in V_{k}^{-},\left\Vert (u,v)\right\Vert =\rho _{k}\right\} \leq 0.
\end{eqnarray*}

First, we prove that ($A_{1}$) holds. By direct computations, we have, for
any $(u,v)\in Z_{k}$ with $\left\Vert (u,v)\right\Vert =\gamma
_{k}=(2p^{+}q^{+}C\beta _{k})^{1/(\min \{p^{-},q^{-}\}-\max \{\gamma
^{+},\delta ^{+}\})}$, we have
\begin{equation*}
\begin{array}{l}
\varphi (u,v)=\int_{\Omega }\frac{1}{p(x)}\left\vert \nabla u\right\vert
^{p(x)}dx+\int_{\Omega }\frac{1}{q(x)}\left\vert \nabla v\right\vert
^{q(x)}dx \\
-\int_{\Omega }\lambda \left\vert u\right\vert ^{\alpha (x)}\left\vert
v\right\vert ^{\beta (x)}dx-\int_{\Omega }F(x,u,v)dx \\
\\
\geq \frac{1}{p^{+}}\int_{\Omega }\left\vert \nabla u\right\vert ^{p(x)}dx+
\frac{1}{q^{+}}\int_{\Omega }\left\vert \nabla v\right\vert ^{q(x)}dx \\
-C\int_{\Omega }\left\vert u\right\vert ^{\gamma (x)}dx-C\int_{\Omega
}\left\vert v\right\vert ^{\delta (x)}dx-C_{1} \\
\\
\geq \frac{1}{p^{+}}\left\Vert u\right\Vert _{p(\cdot )}^{p^{-}}-C\left\vert
u\right\vert _{\gamma (\cdot )}^{\gamma (\xi )}+\frac{1}{q^{+}}\left\Vert
v\right\Vert _{q(\cdot )}^{q^{-}}-C\left\vert v\right\vert _{\delta (\cdot
)}^{\delta (\eta )}-C_{1}\text{ (where }\xi ,\eta \in \Omega \text{)} \\
\\
\geq \frac{1}{p^{+}}\left\Vert u\right\Vert _{p(\cdot )}^{p^{-}}-C\beta
_{k}^{\gamma ^{+}}\left\Vert u\right\Vert _{p(\cdot )}^{\gamma ^{+}}+\frac{1
}{q^{+}}\left\Vert v\right\Vert _{q(\cdot )}^{q^{-}}-C\beta _{k}^{\delta
^{-}}\left\Vert v\right\Vert _{q(\cdot )}^{\delta ^{+}}-C_{2} \\
\geq \frac{1}{p^{+}q^{+}}\left\Vert (u,v)\right\Vert ^{\min
\{p^{-},q^{-}\}}-C\beta _{k}\left\Vert (u,v)\right\Vert ^{\max \{\gamma
^{+},\delta ^{+}\}}-C_{2} \\
=\frac{1}{2p^{+}q^{+}}(2p^{+}q^{+}C\beta _{k})^{\min \{p^{-},q^{-}\}/(\min
\{p^{-},q^{-}\}-\max \{\gamma ^{+},\delta ^{+}\})}-C_{2}.
\end{array}
\end{equation*}

Therefore $\varphi (u,v)\geq \frac{1}{2p^{+}q^{+}}\gamma _{k}^{\min
\{p^{-},q^{-}\}}-C_{2}$, $\forall (u,v)\in Z_{k}$ with $\left\Vert
(u,v)\right\Vert =\gamma _{k}$, then $b_{k}\rightarrow +\infty
,(k\rightarrow \infty )$. So we have shown that $(A_1)$ holds.

Next we show that ($A_{2}$) holds. From the definition of $
(h_{n},h_{n}^{\ast })$, it is easy to see that
\begin{equation*}
\varphi (th,th^{\ast })\rightarrow -\infty \text{ as }t\rightarrow +\infty ,
\end{equation*}
for any $(h,h^{\ast })\in V_{k}^{-}=span\{(h_{1},h_{1}^{\ast }),\cdots
,(h_{k},h_{k}^{\ast })\}$ with $\parallel (h,h^{\ast })\parallel =1$.
Therefore, $(A_2)$ also holds.

Now, applying Lemma \ref{fountain}, we finish the proof of Theorem 1.3.

\end{document}